\newtheorem{theorem}{Theorem}[section]
\newtheorem{proposition}[theorem]{Proposition}
\newtheorem{lemma}[theorem]{Lemma}
\newtheorem{corollary}[theorem]{Corollary}
\newtheorem{definition}[theorem]{Definition}
\newtheorem{remark}[theorem]{Remark}
\numberwithin{equation}{section}
\newcommand\beq{\begin{equation}}
\newcommand\eeq{\end{equation}}
\newcommand{\bea}{\begin{eqnarray}}
\newcommand{\eea}{\end{eqnarray}}
\newcommand{\beas}{\begin{eqnarray*}}
\newcommand{\eeas}{\end{eqnarray*}}
\newcommand{\bel}{\begin{equation} \label}
\newcommand{\ee}{\end{equation}}
\newcommand{\Q}{\mathcal{Q}}
\newcommand{\K}{\mathcal{K}}
\newcommand{\W}{\mathcal{W}}
\newcommand{\rd}{{\mathbb R}^{2}}
\newcommand{\re}{{\mathbb R}}
\newcommand{\Z}{\mathbb Z}
\newcommand{\al}{a_{\alpha}}
\newcommand{\be}{a_{\beta}}
\newcommand\cA{\mathcal A}
\newcommand\R{\mathbb R}
\newcommand\N{\mathbb N}
\newcommand\T{\mathcal{T}}
\def\N{\mathbb{N}}
\def\Z{\mathbb{Z}}
\newcommand\G{\mathbb{G}}
\newcommand{\1}{\mathbf{1}}
\renewcommand\div{\mathrm{div}}
\DeclareMathOperator{\supp}{supp}
\DeclareMathOperator{\tr}{tr}
\newcommand{\la}{\langle}
\newcommand{\ra}{\rangle}
\newcommand{\scal}[1]{\la #1 \ra}
\numberwithin{equation}{section}
\begin{document}

\title[Quantization of the edge currents for magnetic models]{Quantization of edge currents along magnetic barriers and magnetic guides}

\author[N. Dombrowski]{Nicolas Dombrowski}
\address[Dombrowski]{Universit\'e de Cergy-Pontoise,
CNRS UMR 8088, D\'epartement de Ma\-th\'e\-matiques, F-95000
Cergy-Pontoise, France; {\em Present address}: Pontificia
Universidad Cat\'olica de Chile, Facultad de Matem\'aticas, Av.
Vicu\~na Mackenna 4860,  Santiago de Chile}
\email{nicolas.dombrowski@u-cergy.fr}

\author[F. Germinet]{Fran\c cois Germinet}
\address[Germinet]{Universit\'e de Cergy-Pontoise,
CNRS UMR 8088, IUF, D\'epartement de Math\'ematiques,
F-95000 Cergy-Pontoise, France}
\email{germinet@math.u-cergy.fr}

\author[G. Raikov]{Georgi Raikov}
\address[Raikov]{Pontificia Universidad Cat\'olica de Chile, Facultad de Matem\'aticas,
Santiago de Chile}
\email{graikov@mat.puc.cl}

\thanks{2000 \emph{Mathematics Subject Classification.}
Primary 82B44; Secondary  47B80, 60H25}

\thanks{\emph{Keywords.}
Magnetic barriers, magnetic guides, quantized currents, quantum Hall effect, Iwatsuka Hamiltonian, magnetic perturbation}

\begin{abstract}
 {We investigate the edge conductance of particles submitted to an Iwatsuka magnetic field, playing the role of a purely magnetic barrier. We also consider magnetic guides generated by generalized Iwatsuka potentials. In both cases we prove quantization of the edge conductance.  Next, we consider magnetic
perturbations of such magnetic barriers or guides, and prove stability of the quantized value of the edge
conductance. Further, we establish a sum rule for edge
conductances. Regularization within the context of disordered systems is discussed as well.}
\end{abstract}

\maketitle

 \today
\tableofcontents

\section{Introduction}
\label{sectintro}

Quantization of two dimensional edge states appeared within the
context of the Quantum Hall Effect in a seminal paper of Halperin
\cite{Ha}. Existence of quantum states flowing along edges has
been studied mathematically in several recent works, e.g.
\cite{DBP,FGW,FM1,FM2,CHS}. Besides the existence of such states,
the question of their quantization has been brought forth mathematically in
\cite{SBKR,EG,EGS,CG} through the quantization of a so called edge conductance,  together with the issue of the equality between that edge conductance and the bulk conductance, also called the Hall conductance (concerning the quantization of the bulk conductance itself, see \cite{Be,BES,ASS,AG,GKS1,GKS2}). It is worth pointing out that in all these works, the edge is modeled by
a confining electric potential or by a hard wall with Dirichlet
boundary condition, a case that could be interpreted as an
infinite electric potential in a half-plane. Considered perturbations are designed by electric potentials as well. In this article we are interested in the same phenomena, but generated by purely  magnetic effects. Furthermore, we investigate the quantization of currents carried by magnetic wave guides, a new feature compared to the electric case.

First, the wall is designed by an Iwatsuka magnetic field \cite{I}, i.e. a $y$-independent magnetic field with a decaying profile in the $x$-axis and of constant sign. As a matter of fact, the particle is subjected to, say, a strong magnetic field on the right half plane, and to a weaker one on the left half plane. The extreme version of this model would be a magnetic field with strength $B_->0$ for $x<0$ and $B_+>0$ for $x>0$, with $B_+ - B_-$ large enough. Due to the strength difference, the particle is localized on circle of radius $1/\sqrt{B_+}$ inside the left half plane and $1/\sqrt{B_-}>1/\sqrt{B_+}$ inside the right half plane; it is easy to be convinced that near the interface $x=0$, states with energies between $B_-$ and $B_+$ are extended, and induce a current flowing in the $y$ direction (see for instance Fig.~6.1 in \cite{CFKS}); so that the interface $x=0$ plays here  the role of a ```magnetic wall".

The spectral interpretation of this fact is the ac nature of the
spectrum, as proven in  \cite{I}. However ac spectrum does not shed
light on existing currents flowing along the edge. We shall
provide a simple computation of the edge conductance that
validates this intuition, showing it is non zero when considering
energies above the first Landau level of a Landau Hamiltonian of
magnetic intensity $B_-$; the edge conductance is actually
quantized, in concordance with the physics of the quantum effect
and Halperin's argument. We mention here that if the existence of
edge states for the half-plane model has been proved in case of
electric perturbations \cite{DBP,FGW}, by showing that band
functions of the unperturbed system  have a strictly positive
derivative, and then using Mourre's theory, a similar approach
fails with the Iwatsuka potential for band functions may not be
monotone. Nevertheless, our analysis goes through, because the
edge conductance ``computes" the net current, even if there are
several channels of opposite sign.

A totally different situation is the one where the magnetic field profile  is a  $y$-independent monotone decaying function of non constant sign. We will call such an Hamiltonian a generalized Iwatsuka Hamiltonian. The properties of such operators are nevertheless quite different from those of the standard Iwatsuka Hamiltonian. In particular, particles are confined to a strip rather than to a half plane. We mention that there has been some recent attention in the physcis literature for such quantum magnetic guides, for they exhibit interesting extended states called  ``snake-orbit" states \cite{RMCPV,RP}. We will prove that currents carried by such states are quantized as well.

Secondly, the perturbations we consider are also of magnetic nature. To motivate the study of such magnetic perturbations, let us recall  that relevant perturbations within the context of the quantum Hall effect are random perturbations (modeling impurities or defects of the sample), for the localized states they generate are responsible for the celebrated plateaux of the (bulk) Hall conductance. Occurrence of localized states which could arise from random magnetic perturbations in dimension 2 in relation with quantum Hall systems, has been intensively studied in the physics literature  over the past two decades  (see e.g. \cite{AHK,BSK,Fu,V}). Mathematically, the proof of the occurrence of Anderson localization due to random magnetic potentials only is not an easy task, and very few preliminary results are available: recently Ghribi, Hislop, and Klopp \cite{GHK} proved localization for  random magnetic perturbations of a periodic magnetic potential  (see also \cite{KNNY} for a particular discrete model). Ueki \cite{U} proved localization for some magnetic perturbation of the Landau Hamiltonian at the bottom of the spectrum (below the first Landau level). In the companion note \cite{DGR2}, we provide an example that is relevant to the theory of the quantum Hall effect, namely a  random magnetic perturbation of the Landau Hamiltonian with localized states at the edges of the first $J$ bands, $J\ge 1$ given.

As a preliminary result, we show that currents generated by Iwatsuka and generalized Iwatsuka Hamiltonians are quantized and we compute the exact value of the edge conductance. Next, we prove that magnetic perturbations carried by magnetic fields compactly supported in the $x$ axis do not affect the edge conductance. In particular, if we consider a magnetic strip and a moderate magnetic field inside, then the net current flowing along these axes is zero, like in the electric case. Then, we consider  perturbations which do not vanish at infinity, and provide a sum rule similar to the one obtained in \cite{CG}. Namely, the edge conductance of the perturbed system is the sum of the edge conductance of the magnetic confining potential and of the edge conductance of the system without magnetic wall defined by the Landau Hamiltonian of magnetic strength $B_-$ perturbed by the magnetic potential. This enables us to compute the edge conductance for the magnetically perturbed Hamiltonian when  energies  fall inside a gap of the unperturbed Landau Hamiltonian of magnetic strength $B_-$. To consider energies corresponding to localized states, one has to go one step further and regularize the trace that defines the edge conductance (see \cite{CG,EGS}) and use the localization properties of the model as provided by the theory of random Schr\"odinger operators \cite{AENSS,GK1}. As an illustration, we revisit the model considered in \cite{DGR2} and discuss the quantization of its  (regularized) edge conductance in presence of an Iwatsuka confining wall.

Of course, it follows from \cite{CG} and the results of the present paper that any combination of electric and magnetic potentials defining the confining wall and the perturbation will work in the same way.

The paper is organized as follows. In Section~\ref{sectmain} we state our main results. In Section~\ref{sectIw} we gather properties of the generalized Iwatsuka Hamiltonian and prove the quantization of the edge conductance in the unperturbed case. In Section~\ref{sectcompact} we consider compact (in the $x$ axis) magnetic
perturbations of magnetic barriers, and prove stability of the quantized value of the edge
conductance. In Section~\ref{sectSumrule},  we consider non vanishing magnetic perturbations and establish a sum rule for edge
conductances. In Section~\ref{sectregul} we discuss regularization of the edge conductance in the presence of disorder. Finally, in the Appendix (Section~\ref{sectAuxiliary}) we gather for  reader's convenience some trace estimates  used intensively in the main text.

\section{Definitions and Results}
\label{sectmain}

 \subsection{Edge conductance}

Let $A \in L^2_{\rm loc}(\re^2;\rd)$. Define
    $$
    H(A) : = (i\nabla + A)^2
    $$
    as the self-adjoint operator generated in $L^2(\rd)$ by the closure of the quadratic form
    $$
    \int_{\rd} |i\nabla u + A u|^2 dx, \quad u \in {\mathcal C}_0^{\infty}(\rd).
    $$
    If $A \in L^4_{\rm loc}(\rd; \rd)$ and ${\rm div}\,A \in L^2_{\rm loc}(\rd;\re)$, then $H(A)$ is essentially self-adjoint on ${\mathcal C}_0^{\infty}(\rd)$
    (see \cite{LS}).

    We will say that the magnetic potential $A = (A_1, A_2)$
generates the magnetic field $B : \re^2 \to \re$ if
    \bel{3}
    \frac{\partial A_2}{\partial x} - \frac{\partial A_1}{\partial
    y} = B(x,y), \quad (x,y) \in \rd.
    \ee

The celebrated Landau Hamiltonian corresponds to the case $B(x,y)=B\neq 0$ constant, in which case the spectrum is consists in the so called Landau levels $(2n+1)|B|$, $n\in \N : = \{0,1,2,\ldots\}$. To fix notations, we will denote by $H_B$ the Landau Hamiltonian, and set
\bel{defgap}
\G_0(B)=]-\infty, |B|[, \mbox{ and } \mathbb{G}_n(B)=](2n-1)|B|, (2n+1)|B|[  \mbox{ for } n\ge 1,
\ee
the (open) Landau gaps.

 We will  say that $f : \re \to [0,1]$ is {\em
an  increasing (resp. decreasing) switch function} if $f \in C^{\infty}(\re)$ is monotone, with a
compactly  supported derivative,  $f\equiv 1$, (resp., $f\equiv
0$) on the right side of $\supp f'$, and  $f\equiv 0$, (resp., $f\equiv
1$)  on the left.

\begin{definition}\label{defconduc}
Let $\chi\in \mathcal{C}^{\infty}(\re^2)$ be a x-translation
invariant increasing switch function with ${\rm supp}\,\chi'\subset
\re\times[-\frac{1}{4},\frac{1}{4}]$, and $ g\in
\mathcal{C}^{\infty}(\re)$ be a decreasing  switch function with $ {\rm
supp}\, g'\subset I=[a,b]$, a compact interval. The edge
conductance of a Hamiltonian $H$ in the interval $I$ is defined as
\begin{equation} \label{conduc}
 \sigma_e^{(I)}(H):= -2\pi \tr\,(g'(H)i[H,\chi]),
\end{equation}
whenever the trace exists.
\end{definition}

We note that although the above definition depends a priori on the
choice of $g$ and $\chi$, results will not.

Note also that when the magnetic field is reversed $B \mapsto -B$, then the edge conductance is changed into its opposite, whenever it exists.

The factor $2\pi$ is introduced in order that conductance  $\sigma_e^{(I)}(H)$ be integer-valued.

 \subsection{Generalized Iwatsuka Hamiltonians}

A magnetic field will be called a generalized Iwatsuka magnetic field if the following conditions hold:
\begin{itemize}
\item[{\bf GIw.1}] $B$ depends only on  the first coordinate, i.e. $B(x,y) = B(x)$;
\item[{\bf GIw.2}] $B$ is a monotone function of $x$;
\item[{\bf GIw.3}] There exist two numbers $B_-,B_+\in \R\setminus\{0\}$ such that
    $$
    \lim_{x \to \pm \infty} B(x) = B_{\pm} .
    $$
\end{itemize}

 Depending on the context, we may further assume that the magnetic field $B$ is in $ \mathcal{C}^1(\rd;\re)$.

    We will call such a magnetic field a $(B_-,B_+)$-magnetic field.
    Introduce the magnetic potential $\cA_{\mathrm{GIw}}^{(B_-,B_+)} = (A_1, A_2)$ with
    \beq\label{beta}
    A_1 = 0, \quad  A_2  = \beta(x): = \int_0^{x} B(s) ds, \quad x
    \in \re.
    \eeq
$$
H(\cA_{\mathrm{GIw}}^{(B_-,B_+)}) : = - \frac{\partial^2}{\partial x^2} + \left(-i\frac{\partial}{\partial y} - \beta(x)\right)^2.
$$

Obviously, $\cA_{\mathrm{GIw}}^{(B_-,B_+)}$ generates $B$.  When $B_-B_+>0$, i.e. in the case considered in the original work by Akira Iwatsuka \cite{I}, we will use the shorter term {\em Iwatsuka Hamiltonian}, and will write $\cA_{\mathrm{Iw}}^{(B_-,B_+)}$ or simply $\cA_{\mathrm{Iw}}$. This corresponds to a one edge situation, for the magnetic field difference $B_+-B_-$ creates a classically forbidden half space for low enough electron energy. The case $B_-B_+<0$ corresponds to a magnetic wave guide, with two classically forbidden regions at $x=\pm \infty$.

 \subsection{Quantization of edge currents for  generalized Iwatsuka Hamiltonians}

It turns out that the edge conductance can be explicitly computed for generalized Iwatsuka Hamiltonians.

\begin{theorem}\label{thmvalue}
Let $\cA_{\mathrm{GIw}}^{(B_-,B_+)}$ be a  generalized Iwatsuka potential. Let $I$ be an interval such that for some integers $n_-,n_+\ge 0$  we have
\beq\label{Igap}
I\subset \G_{n_-}(B_-)\cap \G_{n_+}(B_+).
\eeq
Then
\beq
 \sigma_e^{(I)}(H(\cA_{\mathrm{GIw}}^{(B_-,B_+)}))=(\mathrm{sign} \,B_-)n_-  -(\mathrm{sign} \, B_+)n_+.
\eeq
\end{theorem}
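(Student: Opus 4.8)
The plan is to exploit the fact that the generalized Iwatsuka Hamiltonian $H(\cA_{\mathrm{GIw}}^{(B_-,B_+)})$ is invariant under translations in $y$, so that a partial Fourier transform in $y$ decomposes it as a direct integral $\int_{\re}^{\oplus} h(k)\,dk$, where $h(k) = -\frac{d^2}{dx^2} + (\beta(x)-k)^2$ is a one-dimensional Schr\"odinger operator on $L^2(\re_x)$ with a confining potential (since $\beta(x)-k \to B_{\pm}x - k$ grows linearly at $\pm\infty$). Hence each $h(k)$ has compact resolvent and purely discrete spectrum $E_1(k) \le E_2(k) \le \cdots$, the band functions, which are real-analytic in $k$. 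The first task is to record the asymptotics of the band functions as $k \to \pm\infty$: rescaling shows that $h(k)$ converges (in strong resolvent sense, after translating the $x$ variable) to the Landau Hamiltonian fiber of intensity $B_{\pm}$, whose $n$-th fiber eigenvalue is the Landau level $(2n-1)|B_{\pm}|$. Consequently $E_j(k) \to (2j-1)|B_-|$ as $k\to -\infty$ (if $B_->0$; the reversed limit if $B_-<0$, and symmetrically at $+\infty$ with $B_+$). This is essentially Iwatsuka's analysis and should be citable from \cite{I}, or reproved quickly.

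Next I would reduce the trace defining $\sigma_e^{(I)}$ to a sum over band functions. Writing $H = \int^\oplus h(k)\,dk$ and noting that $i[H,\chi]$ acts, after conjugating by the Fourier transform, as $-\partial_k$ composed with multiplication by the fixed profile of $\chi$ in the transformed variable, one obtains (this is the standard computation from \cite{SBKR,EGS,CG})
\beq\label{bandsum}
\sigma_e^{(I)}(H) = \sum_{j\ge 1} \big(\text{number of signed crossings of }I\text{ by }E_j\big),
\eeq
more precisely $\sigma_e^{(I)}(H) = -\sum_j \int_{\re} g'(E_j(k))\,E_j'(k)\,dk = \sum_j \big(g(E_j(-\infty)) - g(E_j(+\infty))\big)$, the last step by the fundamental theorem of calculus and the fact that $g\circ E_j$ has limits at $\pm\infty$. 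Since $g\equiv 1$ below $I$ and $g\equiv 0$ above $I$ (decreasing switch), $g(E_j(\pm\infty))$ is $1$ if $E_j(\pm\infty)$ lies below $I$, $0$ if above, and there are no band functions asymptotic to a value inside $I$ because the asymptotic values are Landau levels of $B_\pm$ and $I$ lies in the gaps $\G_{n_\pm}(B_\pm)$.

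It then remains to count. Under the assumption $I\subset\G_{n_-}(B_-)$, the band functions $E_j$ with $E_j(-\infty)$ below $I$ are exactly those whose left Landau limit is $(2\ell-1)|B_-|$ with $\ell \le n_-$, i.e. $n_-$ of them when $B_->0$; and when $B_-<0$ the roles of $\pm\infty$ swap, contributing with the opposite sign, which is encoded by the factor $\mathrm{sign}\,B_-$. The same accounting at $+\infty$ gives the term $-(\mathrm{sign}\,B_+)n_+$ (the extra minus sign coming from $g(E_j(+\infty))$ entering \eqref{bandsum} with a minus). Combining yields $\sigma_e^{(I)} = (\mathrm{sign}\,B_-)n_- - (\mathrm{sign}\,B_+)n_+$. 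I expect the main obstacle to be purely technical rather than conceptual: justifying that the formal trace computation \eqref{bandsum} is legitimate, i.e. that $g'(H)i[H,\chi]$ is genuinely trace class and that the direct-integral manipulation (interchange of trace and $\int dk$, integration by parts in $k$) is valid. This requires the Helffer--Sj\"ostrand / trace-norm estimates on $g'(h(k))$ uniformly in $k$, together with decay of $\chi'$, precisely the kind of auxiliary trace bounds the authors have relegated to the Appendix; I would invoke those. A secondary point to handle with care is the non-monotonicity of the band functions in the genuine Iwatsuka case — but as the introduction stresses, formula \eqref{bandsum} only sees the net number of signed crossings, equivalently the difference of boundary values $g(E_j(-\infty))-g(E_j(+\infty))$, so non-monotonicity is harmless.
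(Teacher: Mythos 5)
Your proposal follows essentially the same route as the paper: partial Fourier transform, direct-integral fibers $h(k)$ with discrete band functions $E_j(k)$, reduction of the trace to $\sum_j\bigl(g(E_j(-\infty))-g(E_j(+\infty))\bigr)$ as in \cite{CG}, and a count of asymptotic Landau levels below $I$. The only point to state more carefully is the wave-guide case $B_-B_+<0$: there \emph{all} band functions diverge at one end of the $k$-axis and converge at the other end to the merged spectrum of \emph{two} harmonic oscillators of frequencies $|B_-|$ and $|B_+|$ (the paper's Proposition~\ref{p1}), rather than each $E_j$ tending to a single Landau level of $B_\pm$ at $k\to\pm\infty$; your per-Landau-level sign-flip accounting nevertheless yields the correct total.
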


\begin{corollary}\label{cor1edge}
Assume $0<B_-<B_+$. Consider the Iwatsuka potential
$\cA_{\mathrm{Iw}}^{(B_-,B_+)}$. Let $I\subset \G_n^{(B_-)} \cap
(-\infty, B_+)$, for some integer $n\ge 0$.  Then \beq
 \sigma_e^{(I)}(H(\cA_{\mathrm{Iw}}^{(B_-,B_+)}))=n.
\eeq
\end{corollary}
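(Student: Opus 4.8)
The plan is to obtain Corollary~\ref{cor1edge} as a direct specialization of Theorem~\ref{thmvalue}, so that the only work is to check that the hypotheses of the latter are met and then to evaluate the resulting formula.

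First I would observe that $\cA_{\mathrm{Iw}}^{(B_-,B_+)}$ is a generalized Iwatsuka potential in the sense of {\bf GIw.1}--{\bf GIw.3}: by construction it is generated by a $y$-independent magnetic field $B(x)$ with $\lim_{x\to\pm\infty}B(x)=B_\pm$, which one takes monotone (increasing, since $0<B_-<B_+$), and here $B_-B_+>0$; thus all three conditions hold and Theorem~\ref{thmvalue} applies. Next I would match the interval condition~\eqref{Igap}. The assumption $I\subset\G_n(B_-)$ directly supplies the first factor with $n_-=n$. For the second factor, since $B_+>0$ we have $|B_+|=B_+$, so by the definition~\eqref{defgap} of the Landau gaps $\G_0(B_+)=(-\infty,B_+)$; hence the hypothesis $I\subset(-\infty,B_+)$ says precisely $I\subset\G_0(B_+)$, and we may take $n_+=0$. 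Therefore~\eqref{Igap} holds with $n_-=n$ and $n_+=0$.

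Finally, feeding these values into the conclusion of Theorem~\ref{thmvalue} and using $\mathrm{sign}\,B_-=\mathrm{sign}\,B_+=1$ gives
\[
\sigma_e^{(I)}(H(\cA_{\mathrm{Iw}}^{(B_-,B_+)}))=(\mathrm{sign}\,B_-)\,n_- -(\mathrm{sign}\,B_+)\,n_+=n-0=n,
\]
which is the claim. I do not expect any genuine obstacle here beyond this bookkeeping; the only two steps that deserve a moment of care are the identification $(-\infty,B_+)=\G_0(B_+)$ read off from~\eqref{defgap}, and the sign convention $\mathrm{sign}\,B_\pm=+1$ coming from $B_\pm>0$. (One may also remark that for the interval in the statement to be nonempty when $n\ge1$ one needs $(2n-1)B_-<B_+$, but this is irrelevant to deriving the formula.)
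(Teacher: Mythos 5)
Your proposal is correct and is exactly the intended derivation: the corollary is the specialization of Theorem~\ref{thmvalue} to $B_-B_+>0$ with $n_-=n$ and $n_+=0$, the key bookkeeping being the identification $(-\infty,B_+)=\G_0(B_+)$ from~\eqref{defgap} and $\mathrm{sign}\,B_\pm=+1$. The paper offers no separate proof for this corollary, treating it as immediate in precisely this way.
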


\begin{corollary}\label{corsnake}
Assume $B_-<0<B_+$, and consider the generalized Iwatsuka
potential $\cA_{\mathrm{GIw}}^{(B_-,B_+)}$. Let $I$ be an interval
and $n_-,n_+\in\N$  such that $I\subset \G_{n_-}(B_-)\cap
\G_{n_+}(B_+)$. Then \beq
\sigma_e^{(I)}(H(\cA_{\mathrm{GIw}}^{(B_-,B_+)}))=-(n_- + n_+).
\eeq In particular if $B_-=-B_+$ and $I\subset \G_{n}(B_-)$, then
$\sigma_e(H(\cA_{\mathrm{GIw}}^{(B_-,B_+)}))=-2n$.
\end{corollary}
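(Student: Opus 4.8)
The plan is to deduce both assertions directly from Theorem~\ref{thmvalue}, so that the work reduces to checking the hypotheses and evaluating the sign factors. Under the standing assumption $B_-<0<B_+$ one has $\mathrm{sign}\,B_-=-1$ and $\mathrm{sign}\,B_+=+1$. The assumption that $I$ is an interval with $I\subset\G_{n_-}(B_-)\cap\G_{n_+}(B_+)$ is precisely hypothesis \eqref{Igap}, so Theorem~\ref{thmvalue} applies and gives
\beq
\sigma_e^{(I)}(H(\cA_{\mathrm{GIw}}^{(B_-,B_+)}))=(\mathrm{sign}\,B_-)\,n_- -(\mathrm{sign}\,B_+)\,n_+ = -n_- - n_+,
\eeq
which is the claimed value $-(n_-+n_+)$.

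For the last sentence, the key elementary observation is that the Landau gaps $\G_n(B)$ introduced in \eqref{defgap} depend on $B$ only through $|B|$: indeed $\G_0(B)=\,]-\infty,|B|[\,$ and $\G_n(B)=\,](2n-1)|B|,(2n+1)|B|[\,$. Hence, when $B_-=-B_+$, we have $\G_n(B_-)=\G_n(B_+)$ for every $n\ge 0$, so an interval $I\subset\G_n(B_-)$ automatically satisfies $I\subset\G_n(B_-)\cap\G_n(B_+)$, i.e. the hypotheses of the first part hold with $n_-=n_+=n$. Plugging this into the formula just obtained yields $\sigma_e^{(I)}(H(\cA_{\mathrm{GIw}}^{(B_-,B_+)}))=-2n$.

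Since the statement is a straightforward specialization of Theorem~\ref{thmvalue}, there is no genuine obstacle here: the substantive content — in particular the existence of the trace in \eqref{conduc} — is already contained in Theorem~\ref{thmvalue}. The only points that require any care are the bookkeeping of the orientation conventions (the two classically forbidden regions at $x=\pm\infty$ contribute edge currents of the \emph{same} sign once the opposite signs of $B_\pm$ have been absorbed into the $\mathrm{sign}\,B_\pm$ factors, which is why the two contributions add rather than cancel) and the remark that the gap intervals in \eqref{defgap} are even functions of $B$.
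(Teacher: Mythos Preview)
Your proof is correct and follows exactly the paper's approach: the corollary is stated immediately after Theorem~\ref{thmvalue} without a separate proof, being a direct specialization obtained by plugging in $\mathrm{sign}\,B_-=-1$ and $\mathrm{sign}\,B_+=+1$, together with the observation that $\G_n(B)$ depends only on $|B|$ for the symmetric case. (The paper does sketch an \emph{alternative} derivation at the end of Section~\ref{sectSumrule} via the sum rule \eqref{sumrule2} and Lemma~\ref{lemlowlevel}, but that is presented as a second route, not the primary one.)
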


\begin{remark}
Corollary~\ref{corsnake} describes a purely magnetic phenomenon. With electric barriers, the net current is always zero \cite[Corollary~1]{GK}, as it is the case as well for magnetic barriers induced by magnetic fields of constant signs (see Corollary~\ref{corstrip} below). And if no magnetic field is present inside the strip, then currents are not quantized (the edge conductance is infinite). Such extended states are called `snake-orbit" states.
\\
In the particular case $|B_-|=-|B_+|$, it is interesting to note that the value of the edge conductance is exactly to twice the value coming from the Quantum Hall Effect.
\end{remark}

 \subsection{Stability of the edge conductance under magnetic perturbations}
We now state  results concerning
the stability of the edge conductance under purely magnetic perturbations.
The first one asserts that  magnetic perturbations supported on a strip in
the $y$ direction do no affect the edge conductance.

\begin{theorem}\label{thm1} Let $A \in {\mathcal C}^1(\rd; \rd)$.
Assume that $a\in\mathcal{C}^2(\R^2,\R^2)$ is a magnetic potential  compactly supported in the
$x$-direction and polynomially bounded in the $y$-direction. Let
$g$ be as in Definition~\ref{defconduc}, with $\supp g'\subset I$. Then
\begin{align} \label{kaa}
g'(H(A+a))[H(A+a),\chi] - g'(H(A))[H(A),\chi] \in \mathcal{T}_1
\end{align}
where $\mathcal{T}_1$ denotes the trace class.
Moreover, if  $g'(H(A))[H(A),\chi]\in \mathcal{T}_1$, then
\begin{equation}
  \sigma_e^{(I)}(H(A+a)) =  \sigma_e^{(I)}(H(A)).
\end{equation}
In particular, if $\cA_{\mathrm{GIw}}^{(B_-,B_+)}$ generates a $(B_-,B_+)$-magnetic field, and $I$ is as in \eqref{Igap}, then
\beq
\sigma_e^{(I)}(H(\cA_{\mathrm{GIw}}^{(B_-,B_+)}+a))= (\mathrm{sign} \, B_-)n_- -(\mathrm{sign} \, B_+)n_+ .
\eeq
\end{theorem}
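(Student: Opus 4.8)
The plan is to prove the trace-class difference \eqref{kaa} first, since the second assertion is then immediate: if $g'(H(A))[H(A),\chi]\in\mathcal T_1$ then by \eqref{kaa} so is $g'(H(A+a))[H(A+a),\chi]$, both conductances are well-defined, and their difference is $-2\pi\tr$ of a trace-class operator; but this operator is a commutator-like expression whose trace we want to show vanishes. The final statement about $\cA_{\mathrm{GIw}}^{(B_-,B_+)}$ then follows by combining with Theorem~\ref{thmvalue}, provided we know $g'(H(\cA_{\mathrm{GIw}}^{(B_-,B_+)}))[H(\cA_{\mathrm{GIw}}^{(B_-,B_+)}),\chi]\in\mathcal T_1$ — which is established in the course of proving Theorem~\ref{thmvalue} (the trace in Definition~\ref{defconduc} exists there), so we may invoke it.

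\textbf{Step 1: Reduction to a resolvent comparison.} Write $H=H(A)$, $\tilde H=H(A+a)$. Since $a$ is $\mathcal C^2$, compactly supported in $x$ and polynomially bounded in $y$, the perturbation $\tilde H-H$ is (formally) $a\cdot(i\nabla+A)+(i\nabla+A)\cdot a+|a|^2+\mathrm{div}\,a$, an operator that is $H$-relatively compact in a localized sense: multiplied by a function supported away from the $x$-support of $a$ it is negligible. The key structural fact is that $g'(H)$ and $g'(\tilde H)$ both (morally) spatially localize near the edge $x\approx$ const built into $\chi$... no — rather, the point is that $[H,\chi]$ is a first-order operator with coefficients supported in $\supp\chi'\subset\re\times[-\frac14,\frac14]$, i.e. localized in the $y$-direction but \emph{not} in $x$. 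So the trace-class property must come from $g'(H)$: one uses that for energies in a Landau gap away from the magnetic field's asymptotic values, $g'(H)\,\langle x\rangle^{N}$ or $g'(H)\,e^{\varepsilon|x|}$ is bounded (a Combes–Thomas / magnetic-bottom-of-gap estimate, presumably collected in the Appendix). Then $g'(H)[H,\chi]$ factorizes through Hilbert–Schmidt operators of the form $g'(H)\langle y\rangle^{-M}\cdot(\text{bounded})\cdot\1_{\supp\chi'}$, which is trace class by the standard estimate $\|f(H)\,\chi_{\Lambda}\|_{\mathcal T_p}$-type bounds.

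\textbf{Step 2: The difference.} Using a Helffer–Sjöstrand / almost-analytic functional calculus representation $g'(H)=\frac1\pi\int_{\C}\bar\partial\tilde g'(z)\,(H-z)^{-1}\,dL(z)$ and the resolvent identity $(\tilde H-z)^{-1}-(H-z)^{-1}=-(\tilde H-z)^{-1}(\tilde H-H)(H-z)^{-1}$, one writes the difference in \eqref{kaa} as a finite sum of terms each containing a factor $(\tilde H-H)$ sandwiched between resolvents, times $[H,\chi]$ or $[\tilde H,\chi]$. Because $\tilde H-H$ is compactly supported in $x$, while $[H,\chi]-[\tilde H,\chi]=i[H-\tilde H,\chi]$ is compactly supported in both $x$ and $y$ (its $x$-support lies in $\supp a$, $y$-support in $\supp\chi'$), every such term contains at least one genuinely compactly supported (in $\R^2$) factor. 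Combined with the resolvent bounds $\|(H-z)^{-1}\langle x\rangle^N\langle y\rangle^M\,\1_K\|<\infty$ off the spectrum and the localized smoothing of $g'$, each term is trace class; this proves \eqref{kaa}.

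\textbf{Step 3: Vanishing of the trace, hence equality of conductances.} Given \eqref{kaa}, set $T:=g'(\tilde H)[\tilde H,\chi]-g'(H)[H,\chi]\in\mathcal T_1$; we must show $\tr T=0$. The standard trick (as in \cite{CG,EGS}): replace $\chi$ by $\chi+(1-\chi)=1$ pieces, or better, use that for a switch function $g$ with $g'$ supported in $I$ and $g(+\infty)=0,g(-\infty)=1$ (a decreasing switch), the operator $g'(H)i[H,\chi]$ has trace equal to $-\tr(i[g(H),\chi])$-type identities are \emph{not} literally valid because $\chi$ is unbounded; instead one invokes the key algebraic identity behind the definition of edge conductance: $\sigma_e^{(I)}(H)$ depends only on the germ of $H$ near the edge region $\supp\chi'$ and, more to the point, the difference of two edge conductances for operators that agree outside a compact set equals a winding-number/index that here is zero because $a$ is compactly supported in $x$ and produces no net current across the edge. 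Concretely, I would show $\tr T=0$ by a deformation argument: interpolate $H_t:=H(A+ta)$, $t\in[0,1]$, check $t\mapsto \tr(g'(H_t)i[H_t,\chi])$ is $C^1$ with derivative $\tr$ of a commutator of trace-class-times-bounded operators with $\chi$, which vanishes (trace of $[\,\cdot\,,\chi]$ applied to appropriately decaying operators is zero — the cyclicity-of-trace argument valid once one factor decays fast enough in $x$ to absorb the linear growth of $\chi$).

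\textbf{Main obstacle.} The delicate point is Step 3: justifying $\tr T=0$ rigorously, because $\chi$ grows linearly (in the sense that it is not compactly supported in $x$) so naive cyclicity of the trace is unavailable, and one must carefully exploit the $x$-decay of $g'(H)$ (from the gap condition) against the boundedness but non-compact support of $\chi$, plus the $x$-compact support of $a$, to show that every term arising when commuting $\chi$ past the other factors is separately trace class \emph{with zero trace}. This is exactly the kind of estimate the paper defers to its Appendix on trace estimates; I would reduce everything to the statement that $g'(H)\,\psi(x)$ is trace class for $\psi$ bounded with $\mathrm{supp}\,\psi'$ compact, together with $\|g'(H)e^{\epsilon|x|}\1_{|x|>R}\|\to0$, and then the vanishing of $\tr T$ follows from the $t$-derivative computation since $\partial_t H_t$ is compactly supported in $x$.
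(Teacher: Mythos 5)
Your Step 2 is broadly in the spirit of the paper's argument for the trace-class claim \eqref{kaa} (Helffer--Sj\"ostrand formula, resolvent identities, exploiting the compact $x$-support of the perturbation; the paper additionally needs the \emph{cube} of the resolvent because the perturbation is a first-order operator, and a truncation $a=a_{\le r}+a_{\ge r}$ in $y$ with Combes--Thomas bounds to handle the polynomial growth). But Step 3 has a genuine gap: you never confront the fact that the two velocity operators differ. Writing the difference as in \eqref{decompremKAa}, one gets the extra term $2i\,g'(H(A+a))\,a_2\chi'$, which is trace class but whose trace has no reason to vanish; this is precisely the new feature of magnetic (as opposed to electric) perturbations, flagged in Remark~\ref{remKAa}. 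Your interpolation $H_t=H(A+ta)$ reproduces exactly this obstruction: $\frac{d}{dt}\tr\bigl(g'(H_t)i[H_t,\chi]\bigr)$ contains the term $\tr\bigl(g'(H_t)\,i[\partial_tH_t,\chi]\bigr)=-2\tr\bigl(g'(H_t)\,a_2\chi'\bigr)$, which is a plain product, not a commutator, so no cyclicity or ``winding number'' argument disposes of it. The paper's resolution is a gauge transformation: with $F(x,y)=-\int_0^y a_2(x,s)\,ds$ one has $(a+\nabla F)_2=0$, hence $[H(A+a+\nabla F),\chi]=[H(A),\chi]$ (Lemma~\ref{lemgauge}), and only then does the vanishing-of-trace statement (Proposition~\ref{propstrip}, proved by a careful pairing of the six resolvent-cube terms under cyclicity with inserted cutoffs) apply. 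This also explains why the hypothesis $g'(H(A))[H(A),\chi]\in\mathcal T_1$ is assumed: it is needed to justify conjugating by $e^{iF}$ and using cyclicity of the trace, and your proposal does not use this hypothesis anywhere.

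Two smaller points. First, in Step 1 you invoke decay of $g'(H(A))$ in $x$ coming from a ``gap condition'', but Theorem~\ref{thm1} assumes nothing about the spectrum of $H(A)$ in $I$; the trace-class property of the \emph{difference} \eqref{kaa} comes from the compact $x$-support of $a$ (and the gauge-modified $a+\nabla F$), not from spectral localization of $g'(H(A))$. Second, the gauge potential $\nabla F$ grows polynomially in $y$ even when $a$ is nice, which is exactly why the theorem is stated for perturbations only polynomially bounded in $y$ and why the $y$-truncation and the estimate \eqref{decaystrip} are needed; your sketch does not account for this.
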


\begin{remark}\label{remKAa}
In \cite[Theorem~1]{CG}, the analog of the difference \eqref{kaa}
is not only trace class but its trace automatically vanishes. This
is not the case here for the velocity operators  defined for
$H(A)$ and $H(A+a)$ differ. Indeed,
\begin{align}\label{decompremKAa}
& g'(H(A+a))[H(A+a),\chi] - g'(H(A))[H(A),\chi]  \\
& =\left(g'(H(A+a)- g'(H(A))\right)[H(A),\chi]  +
g'(H(A+a))[H(A+a)-H(A),\chi].\notag
\end{align}
The second term on the r.h.s. is due to the magnetic nature of the
perturbation, and may lead to a non trivial contribution to the
current since a direct computation yields $2ig'(H(A+a))a_2\chi'
\in \mathcal{T}_1$. To cancel this extra term, we shall introduce
a suitable gauge transform that will make $a_2$ vanish. To perform
that gauge transform, we assume a bit more than in \cite{CG},
namely, we assume that $g'(H(A))[H(A),\chi]$ (or equivalently
$g'(H(A+a)))[H(A+a),\chi]$) is  trace class.
\end{remark}

\begin{corollary}\label{corstrip}
Let $\cA_{\mathrm{strip}}\in\mathcal{C}^1(\R^2,\R^2)$ be a vector potential generating a
magnetic  field $B \in {\mathcal C}^2(\rd;\re)$ satisfying $B(x,y)= B(x)\ge B_0$ for all
$(x,y)\in\{|x|\ge R_0\}$, $R_0>0$. Then for any magnetic potential
$a \in \mathcal{C}^2(\R^2,\R^2)$ supported on $\{|x|\le R_0\}$,
and for any closed interval $I\subset]-\infty,B_0[$, we have
$\sigma_e^{(I)}(H(\cA_{\mathrm{strip}}+a))=0$.
\end{corollary}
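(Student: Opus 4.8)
The plan is to reduce the claim, via Theorem~\ref{thm1}, to a statement about a purely $x$-dependent comparison operator, and then to read the answer off the fiber decomposition. Observe first that the total field $B+\mathrm{curl}\,a$ of $H(\cA_{\mathrm{strip}}+a)$ coincides with the $y$-independent function $B(x)\ge B_0$ on $\{|x|\ge R_0\}$. I would fix an \emph{arbitrary} $y$-independent $B_\star\in\mathcal C^2(\re)$ with $B_\star(x)=B(x)$ for $|x|\ge R_0$ and set $\cA_\star=(0,\beta_\star)$, $\beta_\star(x)=\int_0^x B_\star(s)\,ds$. The magnetic fields of $\cA_{\mathrm{strip}}+a$ and of $\cA_\star$ differ only inside the strip $\{|x|\le R_0\}$; choosing for that difference a vector potential supported in $\{|x|\le R_0\}$ (hence compactly supported in the $x$-direction, with the required $\mathcal C^2$ regularity since $B-B_\star$ and $a$ are $\mathcal C^2$), I would apply Theorem~\ref{thm1} twice — once to remove $a$, once to pass from $\cA_{\mathrm{strip}}$ to $\cA_\star$, up to a gauge transformation $e^{i\phi}$, which commutes with the $y$-dependent multiplication operator $\chi$. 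The unconditional trace-class statement \eqref{kaa} of Theorem~\ref{thm1} then propagates the trace-class property of $g'(H)i[H,\chi]$ among the three operators, starting from $\cA_\star$ where it is an instance of the estimates of Sections~\ref{sectIw} and~\ref{sectAuxiliary}; the ``moreover'' part of Theorem~\ref{thm1} consequently reduces everything to proving
\[
\sigma_e^{(I)}\bigl(H(\cA_\star)\bigr)=0 .
\]

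For the reduced statement I would use the fiber decomposition $H(\cA_\star)=\int_{\re}^{\oplus}h(k)\,dk$ with $h(k)=-\partial_x^2+(k-\beta_\star(x))^2$ on $L^2(\re)$. Since $\beta_\star'=B_\star\ge B_0>0$ for $|x|\ge R_0$, one has $\beta_\star(x)\to\pm\infty$ as $x\to\pm\infty$, so each $h(k)$ has compact resolvent; let $E_0(k)\le E_1(k)\le\cdots$ be its eigenvalues. The fiber computation underlying Theorem~\ref{thmvalue}, which uses only the $y$-independence, gives
\[
\sigma_e^{(I)}\bigl(H(\cA_\star)\bigr)=-\Bigl[\,\sum_{j} g\bigl(E_j(k)\bigr)\,\Bigr]_{k=-\infty}^{k=+\infty},
\]
a finite sum over $j$ for each $k$ because $g\equiv 0$ on $[\sup I,\infty)$. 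Hence it suffices to show that $h(k)$ has no spectrum below $\sup I$ once $|k|$ is large enough: then $\sum_j g(E_j(k))$ vanishes at both ends and $\sigma_e^{(I)}(H(\cA_\star))=0$. (Heuristically, the only spectral content of $H(\cA_\star)$ in the window $I\subset(-\infty,B_0)$ comes from the ``guide'' region near the strip, and its net $y$-current is zero because each band branch that dips into $I$ lies above $B_0>\sup I$ at $k=\pm\infty$.)

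To establish this spectral bound I would run an IMS localization separating the strip from its exterior. On any fixed window $\{|x|\le R_0+L\}$ the function $\beta_\star$ is bounded, say $|\beta_\star|\le C(L)$, so $(k-\beta_\star(x))^2\ge(|k|-C(L))^2$ there. Take a partition of unity $\phi_L^2+\psi_L^2=1$ with $\phi_L\equiv 1$ on $\{|x|\le R_0\}$, $\supp\phi_L\subset\{|x|\le R_0+L\}$, $\supp\psi_L\subset\{|x|\ge R_0\}$ and $\|\phi_L'\|_\infty^2+\|\psi_L'\|_\infty^2=O(L^{-2})$; then $h(k)=\phi_L h(k)\phi_L+\psi_L h(k)\psi_L-(\phi_L')^2-(\psi_L')^2$, with $\phi_L h(k)\phi_L\ge(|k|-C(L))^2\phi_L^2$, while $\psi_L h(k)\psi_L\ge B_0\psi_L^2$: any $H^1$ function supported in $\{|x|>R_0\}$ belongs to the form domain of the \emph{Dirichlet} magnetic Laplacian on $\{|x|>R_0\}$, whose field there is $B_\star(x)\ge B_0$, and the standard factorization identity yields $H^D_{\{|x|>R_0\}}(\cA_\star)\ge B_\star(x)\ge B_0$. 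Fixing $L$ so that the localization error $O(L^{-2})$ is smaller than the gap $B_0-\sup I>0$, and then taking $|k|$ so large that $(|k|-C(L))^2\ge B_0$, one gets $h(k)\ge B_0-O(L^{-2})>\sup I$.

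The main obstacle, I expect, is exactly this last spectral estimate: one must render quantitative that the low-energy fiber states concentrate in the region where the field is at least $B_0$, which forces the IMS cut-offs onto a length scale large enough to beat the localization error against the \emph{fixed} gap $B_0-\sup I$, and one must invoke the magnetic Dirichlet lower bound on the \emph{variable}-field exterior region. A secondary, routine point is that $B_\star$ need not be monotone on $[-R_0,R_0]$, so one should verify that the trace-class property of $g'(H(\cA_\star))i[H(\cA_\star),\chi]$ and the fiber identity quoted above — both obtained in Section~\ref{sectIw} for generalized Iwatsuka potentials, whose band functions are themselves in general non-monotone — do carry over to $\cA_\star$; only the $y$-independence and the large-$|k|$ localization are actually used.
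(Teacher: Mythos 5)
Your reduction---Theorem~\ref{thm1} plus a gauge transformation to pass to a $y$-independent comparison operator whose field agrees with $B$ outside the strip---is exactly the paper's skeleton, and it is sound. Where you genuinely diverge is in the choice of that comparison operator and, consequently, in the work needed to show its edge conductance vanishes. The paper chooses the interpolating field $\tilde{B}$ so that $\tilde{B}\ge B_0$ \emph{everywhere}, not only for $|x|\ge R_0$; the pointwise inequality $H(\tilde{\cA}_{\mathrm{strip}})-\tilde{B}\ge 0$ (citing \cite{E}) then gives $\inf\sigma(H(\tilde{\cA}_{\mathrm{strip}}))\ge B_0>\sup I$, so $g'(H(\tilde{\cA}_{\mathrm{strip}}))=0$ and the comparison conductance vanishes with no fiber analysis at all---and the trace-class hypothesis in the ``moreover'' part of Theorem~\ref{thm1} holds trivially. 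Your arbitrary interpolation $B_\star$ forces you to (a) prove the large-$|k|$ lower bound on $\inf\sigma(h(k))$ by IMS together with the fiber factorization $h(k)\ge B_\star(x)$ (correct, and in fact the same inequality the paper invokes, just used locally rather than globally), and (b) actually establish the trace-class property and the band-function trace formula for a $y$-independent but non-monotone operator; you flag (b) as routine, but Section~\ref{sectIw} as written covers only generalized Iwatsuka fields, so this step would have to be reworked along the lines of \cite{CG}. Both burdens disappear if you simply choose $B_\star\ge B_0$ on all of $\re$, which is the paper's one-line trick. So: correct, but your route does avoidable work; what it buys is an explicit picture of why the band branches that dip into $I$ carry no net current (each returns above $B_0$ at both ends of the $k$-axis), which the paper's argument conceals.
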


\begin{remark}\label{julymorning}
(i) If we perturb the operator $H(\cA_{\mathrm{strip}})$ by a {\em
magnetic field} supported on a strip $S$, then Proposition
\ref{p12} below implies that there exists a potential ${\mathcal A}$ which
generates this magnetic field, and vanishes outside $S$, so that
the hypotheses of Corollary \ref{corstrip} are satisfied.\\
 (ii) The magnetic potentials  of
Corollary~\ref{corstrip} can be produced by the superposition of
two Iwatsuka-type potentials $\cA_{\mathrm{Iw}}^{(L)},\cA_{\mathrm{Iw}}^{(R)}$,
generating respectively a decreasing magnetic field $B^{(L)} =
B^{(L)}(x)$ with upper limit $B_+^{(L)} \geq B_0$ at $-\infty$, and an
increasing magnetic field $B^{(R)} = B^{(R)}(x)$ with upper limit
$B_+^{(R)} \geq B_0$ at $+\infty$. Particles are then trapped in a magnetic
strip created by these two magnetic barriers and thus can only
travel along the axis of the strip. Corollary~\ref{corstrip}
asserts that no net current can flow in such a strip, whatever the
potential inside the strip is.
The situation is very different from the  case of asymptotic values of $B$ of opposite sign, where the particle is constrained to a strip as well,
but where a net current does exist, and  we could talk about a quantum wave guide.
\end{remark}

Our second theorem provides a sum rule which is similar to one derived in
\cite{CG}. We use the convenient notation
\beq
H(A^{(1)},A^{(2)}):=H(A^{(0)}+A^{(1)}+A^{(2)}),
\eeq
where $A^{(1)}$ (resp., $A^{(2)}$), is supported in the half-plane
$x<R_1$ (resp., $x>R_2$), and $A^{(0)} : = \left(-\frac{By}{2},
\frac{B x}{2}\right)$ with  $B > 0$ corresponds to a reference Landau potential. In particular, $H(0,0)$
is the Landau Hamiltonian with constant magnetic field
$B$.

\begin{theorem}\label{thm2}
Let $I$ be a closed interval such that $I\cap\sigma(H(0,0)) =
\emptyset$, $a_\alpha, a_\beta \in {\mathcal C}^1(\rd; \rd)$. Suppose that $
a_\alpha$ (resp., $a_\beta$), is supported in the half-plane
$x<R_1$ (resp., $x>R_2$). Set
\begin{align}
\K(\al,\be):=& g'(H(\al,\be))i[H(\al,\be),\chi]-g'(H(\al,0))i[H(\al,0),\chi]\notag\\
&-g'(H(0,\be))i[H(0,\be),\chi]. \label{setK}
\end{align}
Then $\K(\al,\be)$ is trace class. Moreover, if two out of the
three terms of the r.h.s. of \eqref{setK} are trace class, then $\tr
\K(\al,\be)=0$; in particular,
\beq\label{sumab}
\sigma_e^{(I)}(H(\al,\be)) = \sigma_e^{(I)}(H(\al,0)) +
\sigma_e^{(I)}(H(0,\be)).
\eeq
Moreover, if
$\cA_{\mathrm{Iw}}^{(L)},\cA_{\mathrm{Iw}}^{(R)}$ are left and right Iwatsuka-type
potentials described in Remark \ref{julymorning},
then the same result holds for
\begin{align}
&\K'(\al,\be):= g'(H(\al,\be))i[H(\al,\be),\chi]-g'(H(\al,\cA_{\mathrm{Iw}}^{(R)}))i[H(\al,\cA_{\mathrm{Iw}}^{(R)}),\chi]\notag\\
&\;  -g'(H(\cA_{\mathrm{Iw}}^{(L)},\be))i[H(\cA_{\mathrm{Iw}}^{(L)},\be),\chi]+ g'(H(\cA_{\mathrm{Iw}}^{(L)},\cA_{\mathrm{Iw}}^{(R)}))i[H(\cA_{\mathrm{Iw}}^{(L)},\cA_{\mathrm{Iw}}^{(R)}),\chi].
\label{may1}
\end{align}
In particular, if  $I\subset
]-\infty, \inf (B_+^{(L)},B_+^{(R)})[$  and if two out of the first three  terms on the r.h.s. of \eqref{may1} are
trace class, then \beq \sigma_e^{(I)}(H(\al,\be)) =
\sigma_e^{(I)}(H(\al,\cA_{\mathrm{Iw}}^{(R)})) +
\sigma_e^{(I)}(H(\cA_{\mathrm{Iw}}^{(L)},\be)) \label{sumrule2}
\eeq
\end{theorem}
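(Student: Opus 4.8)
\emph{Setup.} Write $H_0:=H(0,0)$ for the reference Landau Hamiltonian with field $B$, and $H_L:=H(a_\alpha,0)$, $H_R:=H(0,a_\beta)$, $H_{LR}:=H(a_\alpha,a_\beta)$; put $J_\bullet:=g'(H_\bullet)\,i[H_\bullet,\chi]$ and $g_\bullet:=g'(H_\bullet)$ for $\bullet\in\{0,L,R,LR\}$. Since $a_\alpha$ and $a_\beta$ have disjoint supports, the magnetic perturbations simply add: $H_{LR}=H_0+W_\alpha+W_\beta$, where $W_\alpha:=H_L-H_0$ is a first-order differential operator with coefficients supported in $\{x<R_1\}$ and $W_\beta:=H_R-H_0$ one supported in $\{x>R_2\}$; in particular all four operators coincide with $H_0$ on the ``clean slab'' $\{R_1<x<R_2\}$. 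As $I$ is closed, disjoint from $\sigma(H_0)$, and $\supp g'\subset I$, we have $g_0=0$, hence $J_0=0$, so that $\mathcal K(a_\alpha,a_\beta)=J_{LR}-J_L-J_R+J_0$ is precisely a mixed second difference in the two well-separated perturbations $W_\alpha,W_\beta$. Everything below exploits this separation.

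\emph{Trace class.} A direct computation gives $i[H_\bullet,\chi]=-\{\,i\partial_y+A^{(0)}_2\,,\,\chi'\,\}-2a_{\bullet,2}\chi'$, which is affine in the potential and so has vanishing mixed second difference; feeding this into $\mathcal K$ yields, with $D:=g_{LR}-g_L-g_R$,
\begin{equation*}
\mathcal K(a_\alpha,a_\beta)=D\,i[H_0,\chi]-2(g_{LR}-g_L)\,a_{\alpha,2}\chi'-2(g_{LR}-g_R)\,a_{\beta,2}\chi'.
\end{equation*}
Inserting the Helffer--Sj\"ostrand representation $g_\bullet=\frac1\pi\int_{\C}\bar\partial\widetilde{g'}(z)\,(H_\bullet-z)^{-1}\,d^2z$, with $\widetilde{g'}$ an almost analytic extension of $g'$ supported in a complex neighbourhood of $I$, and using the resolvent identity together with $H_{LR}-H_L=W_\beta$ and $H_{LR}-H_R=W_\alpha$, one writes $D$ as a $z$-integral of the operators $R_{LR}(z)W_\beta R_L(z)W_\alpha R_0(z)$ and $R_{LR}(z)W_\alpha R_R(z)W_\beta R_0(z)$, while $(g_{LR}-g_L)a_{\alpha,2}\chi'$ (resp.\ $(g_{LR}-g_R)a_{\beta,2}\chi'$) becomes a $z$-integral of $R_{LR}(z)W_\beta R_L(z)\,a_{\alpha,2}\chi'$ (resp.\ $R_{LR}(z)W_\alpha R_R(z)\,a_{\beta,2}\chi'$). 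Each such operator carries a factor supported in $\{x<R_1\}$ and a factor supported in $\{x>R_2\}$, with a single resolvent between them; inserting $\1_{\{x<R_1\}}$ and $\1_{\{x>R_2\}}$ around that resolvent and using that $H_\bullet=H_0$ on the slab while $\mathrm{dist}(z,\sigma(H_0))\ge\mathrm{dist}(I,\sigma(H_0))>0$ on $\supp\widetilde{g'}$, a Combes--Thomas estimate localised to the slab gives $\|\1_{\{x<R_1\}}(H_\bullet-z)^{-1}\1_{\{x>R_2\}}\|\le C\,e^{-c(R_2-R_1)}$, uniformly in $z$. Pairing this ``bridging'' factor with the Hilbert--Schmidt/trace-class bounds for the remaining factors collected in the Appendix (Section~\ref{sectAuxiliary}) --- in particular for the factor $\chi'\cdot(\text{first order})$ supported in the strip $\{|y|\le\frac14\}$ --- and integrating the resulting bound $C\,|\mathrm{Im}\,z|^{-N}$ against $|\bar\partial\widetilde{g'}|$, one concludes that $\mathcal K(a_\alpha,a_\beta)\in\mathcal{T}_1$, unconditionally. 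I expect this interference trace estimate to be the main technical obstacle.

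\emph{Vanishing of the trace and the sum rule.} Suppose two of $J_{LR},J_L,J_R$ are trace class; then so is the third (as $\mathcal K\in\mathcal{T}_1$) and $\tr\mathcal K=\tr J_{LR}-\tr J_L-\tr J_R$. Following Remark~\ref{remKAa}, I would first perform on each of $H_L,H_R,H_{LR}$ the gauge transformation $e^{i\lambda_\bullet}$ with $\partial_y\lambda_\bullet$ equal to the $y$-component of the corresponding perturbation; since $\lambda_\bullet$ and $\chi$ are both functions of $y$ they commute, whence $\tr J_\bullet=\tr\widetilde J_\bullet$ with $\widetilde J_\bullet:=g'(\widetilde H_\bullet)\,i[\widetilde H_\bullet,\chi]$, the new perturbations $\widetilde a_\alpha,\widetilde a_\beta$ having vanishing $y$-components (and still supported in $\{x<R_1\}$, resp.\ $\{x>R_2\}$). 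Then $\widetilde W_\alpha:=\widetilde H_L-H_0$ and $\widetilde W_\beta:=\widetilde H_R-H_0$ contain no $\partial_y$ and hence commute with $\chi$, so that $i[\widetilde H_\bullet,\chi]=i[H_0,\chi]$ for all $\bullet$ and $\tr\mathcal K=\tr([g'(\widetilde H_{LR})-g'(\widetilde H_L)]\,i[H_0,\chi])-\tr([g'(\widetilde H_R)-g'(H_0)]\,i[H_0,\chi])$. The Helffer--Sj\"ostrand/cyclicity computation of \cite[Theorem~1]{CG} --- now applicable precisely because $\widetilde W_\beta$ commutes with $\chi$ --- evaluates each of the two traces, and since $\widetilde W_\beta$ is supported in $\{x>R_2\}$ while the backgrounds $\widetilde H_L$ and $H_0$ coincide on $\{x\ge R_1\}$ (in particular near $x=+\infty$), the two evaluations give the same value and cancel. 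Put differently, $\tr([g'(\widetilde H_{LR})-g'(\widetilde H_L)]\,i[H_0,\chi])$ is unchanged when the left perturbation is switched off, so $\tr\mathcal K(a_\alpha,a_\beta)=\tr\mathcal K(0,a_\beta)=0$. By Definition~\ref{defconduc} this is the sum rule \eqref{sumab}.

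\emph{The Iwatsuka variant.} For $\mathcal K'(a_\alpha,a_\beta)$ one repeats the three steps with $H(0,0)$ replaced by the magnetic-strip Hamiltonian $H(\cA_{\mathrm{Iw}}^{(L)},\cA_{\mathrm{Iw}}^{(R)})$: by the gauge freedom recalled in Remark~\ref{julymorning} (Proposition~\ref{p12}) one may take $\cA_{\mathrm{Iw}}^{(L)}$ (resp.\ $\cA_{\mathrm{Iw}}^{(R)}$) supported in $\{x<R_1\}$ (resp.\ $\{x>R_2\}$), so that the four operators in \eqref{may1} again coincide on the clean slab and form a mixed second difference in two well-separated perturbations; the bridging estimate and the cancellation argument then apply verbatim, giving $\mathcal K'\in\mathcal{T}_1$ and, when two of the three relevant terms are trace class, $\tr\mathcal K'(a_\alpha,a_\beta)=0$. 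Finally, if $I\subset\,]-\infty,\inf(B_+^{(L)},B_+^{(R)})[$, then Corollary~\ref{corstrip} applies to $H(\cA_{\mathrm{Iw}}^{(L)},\cA_{\mathrm{Iw}}^{(R)})$ and gives $\sigma_e^{(I)}(H(\cA_{\mathrm{Iw}}^{(L)},\cA_{\mathrm{Iw}}^{(R)}))=0$ --- in particular that term is trace class with vanishing trace --- so it drops out of the sum rule and \eqref{sumrule2} follows.
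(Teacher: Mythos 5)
Your high-level architecture (exploit the mixed-second-difference structure of $\K$, the disjoint half-plane supports of the two perturbations, Combes--Thomas decay across the clean slab, and a gauge transform to remove the $y$-components) is in the right spirit, but it is not the paper's route and the trace-class step has two genuine gaps. First, you apply Helffer--Sj\"ostrand to $g'$ with \emph{single} resolvents, so the terms of $D\,i[H_0,\chi]$ have the form $R\,\W R\,\W R\,[H_0,\chi]$: three resolvents against three first-order differential operators. With the Schatten estimates actually available (Proposition~\ref{gp1}: $\zeta R\in\mathcal{T}_2$, $\zeta{\mathcal L}_\beta R\in\mathcal{T}_4$), such a product only lands in $\mathcal{T}_{4/3}$, not in $\mathcal{T}_1$. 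This is precisely why the paper applies Helffer--Sj\"ostrand to an antiderivative $G$ of $g$ and works with third powers $R^3$ of the resolvents (``we need one extra power of the resolvents in comparison to \cite{CG}''). Second, a single bridging bound $\|\1_{\{x<R_1\}}R\,\1_{\{x>R_2\}}\|\le Ce^{-c(R_2-R_1)}$ cannot be ``paired with Hilbert--Schmidt/trace-class bounds for the remaining factors'': those factors have coefficients supported on half-planes and on the infinite strip $\supp\chi'$, so they are not in any Schatten class individually (every Appendix lemma you invoke requires compact support). One must decompose $\W_\alpha$, $\W_\beta$ and $[H_0,\chi]$ over unit boxes, as in \eqref{decompW}--\eqref{decompcom}, and verify that the resulting triple sum over two half-planes and a line converges thanks to Combes--Thomas decay in \emph{both} coordinates; that summation is the actual content of the estimate, not a formality.

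The trace computation also has a gap: after gauging, you reduce to showing that $\tr\bigl([g'(\widetilde H_{LR})-g'(\widetilde H_L)]\,i[H_0,\chi]\bigr)$ equals $\tr\bigl([g'(\widetilde H_R)-g'(H_0)]\,i[H_0,\chi]\bigr)$, and you assert this because the trace ``is unchanged when the left perturbation is switched off.'' That independence is exactly the interference statement to be proven, and the cyclicity argument of \cite{CG} you invoke is only justified for \emph{compactly supported} perturbations (it needs the cutoffs $\zeta_0,\zeta_1$), whereas $\widetilde\W_\beta$ lives on a half-plane. The paper avoids both difficulties by a different route: it truncates $\be$ to $\be_r=\be\varphi_r$, which is compactly supported in $x$, so that Theorem~\ref{thm1} (already proved by the long cyclicity computation of Proposition~\ref{propstrip}) gives $\K_r\in\mathcal{T}_1$ and $\tr\K_r=0$, and then shows $\|\K-\K_r\|_1\to 0$ polynomially in $r$; both the trace-class property and the vanishing of the trace follow by passing to the limit. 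Finally, for the Iwatsuka variant the paper does not rerun the argument with a new reference operator: it uses the four-term algebraic identity expressing $\K'$ as a signed sum of $\K$'s together with Corollary~\ref{corstrip}, which is cleaner than your ``verbatim repetition.'' Your use of Corollary~\ref{corstrip} to drop the fourth term of \eqref{may1} from the sum rule is, however, correct and matches the paper.
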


As a consequence of Corollary~\ref{cor1edge} and
Theorem~\ref{thm2}, we obtain a quantization of the edge
conductance for magnetic perturbations of the Iwatsuka
Hamiltonian, which in its turn implies the existence of edge
states flowing in the $y$ direction.

\begin{corollary}\label{corcurrent}
Assume $\cA_{\mathrm{Iw}}$ generates a $(B_-,B_+)$-magnetic field with
$B_+\ge 3B_->0$. Let $a\in \mathcal{C}^2(\{x\le R_1\} \times \R)$
for some $R_1<\infty$ be so that $\|a\|_\infty\le K_1 \sqrt{B_-}$
and $\|\mathrm{div}a\|_\infty\le K_2 B_-$, then there exists
$0<K_0<\infty$ such that
$$
 \sigma_e^{(I)} (H(a,\cA_{\mathrm{Iw}}))=n, \quad n \in {\mathbb N},
$$
for $B_-$ large enough and  an interval  $I\subset]-\infty, B_+]$
satisfying
 \beq
  I\subset ](2n-1)B_- + K_0
d_n(a,B_-),(2n+1)B_- -  K_0d_n(a,B_-)[ ,
\eeq
if $n\in\N^\ast : = \{1,2,\ldots\}$, or
 \beq
  I\subset
]-\infty,B_- -  K_0d_n(a,B_-)[,
\eeq
if $n=0$, where
$d_n(a,B_-)=\max\{\|\mathrm{div}a\|_\infty,\|a\|_\infty\sqrt{(n+1)B_-}\}$.
\end{corollary}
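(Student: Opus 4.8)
\emph{Sketch of proof.} The plan is to combine three results already available — the exact value of the edge conductance of a (generalized) Iwatsuka Hamiltonian (Theorem~\ref{thmvalue} and Corollary~\ref{cor1edge}), the perturbative invariance of Theorem~\ref{thm1}, and, since $\cA_{\mathrm{Iw}}$ is not compactly supported in $x$, the Iwatsuka sum rule \eqref{sumrule2} of Theorem~\ref{thm2}. First I would fix the reference potential $A^{(0)}=\bigl(-\tfrac{B_-}{2}\,y,\tfrac{B_-}{2}\,x\bigr)$, so that $H(0,0)=H_{B_-}$; since $d_n(a,B_-)\ge 0$, the interval conditions in the statement force $I\subset\G_n(B_-)\cap\,]-\infty,B_+]$, and $a\in\mathcal C^2$, being supported in $\{x\le R_1\}$ and bounded (hence polynomially bounded in $y$), meets the hypotheses of both Theorem~\ref{thm1} and Theorem~\ref{thm2}.

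Next I would run the sum rule. After absorbing the constant-$B_-$ part of $\cA_{\mathrm{Iw}}$ into $A^{(0)}$ (a gauge change), $H(a,\cA_{\mathrm{Iw}})$ takes the form $H(a,\cA_{\mathrm{Iw}}^{(R)})$ with $\cA_{\mathrm{Iw}}^{(R)}$ a right Iwatsuka-type barrier generating the monotone field $B(\cdot)-B_-$ (which vanishes at $-\infty$ and reaches $B_+-B_-$ at $+\infty$). Introducing an auxiliary left Iwatsuka-type barrier $\cA_{\mathrm{Iw}}^{(L)}$ whose field decreases from a value $B_+^{(L)}>B_+$ at $-\infty$ to $0$ at $+\infty$ — so that the energy-window hypothesis of \eqref{sumrule2} reduces to $I\subset\G_n(B_-)\cap\,]-\infty,B_+[$ — I would apply \eqref{sumrule2} with $\al=a$, $\be=0$ and rearrange it into
\[
\sigma_e^{(I)}(H(a,\cA_{\mathrm{Iw}}))=\sigma_e^{(I)}(H(A^{(0)}+a))-\sigma_e^{(I)}\bigl(H(A^{(0)}+\cA_{\mathrm{Iw}}^{(L)})\bigr).
\]
The term $\sigma_e^{(I)}(H(A^{(0)}+\cA_{\mathrm{Iw}}^{(L)}))$ I would compute from Theorem~\ref{thmvalue}: up to gauge, $H(A^{(0)}+\cA_{\mathrm{Iw}}^{(L)})$ is a generalized Iwatsuka Hamiltonian whose monotone field runs from $B_-+B_+^{(L)}>\sup I$ at $-\infty$ down to $B_-$ at $+\infty$, both positive, so with $n_-=0$ and $n_+=n$ (using $I\subset\G_n(B_-)$) Theorem~\ref{thmvalue} gives the value $-n$; its edge operator is moreover among those shown trace class in Section~\ref{sectIw}. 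The term $\sigma_e^{(I)}(H(A^{(0)}+a))$ should vanish: once $I$ is disjoint from $\sigma(H(A^{(0)}+a))$ we have $g'(H(A^{(0)}+a))=0$, hence the corresponding edge operator is $0$ and $\sigma_e^{(I)}(H(A^{(0)}+a))=0$ — and this also makes two of the first three terms of \eqref{may1} trace class, which is what legitimizes the use of \eqref{sumrule2}. Putting these together gives $\sigma_e^{(I)}(H(a,\cA_{\mathrm{Iw}}))=0-(-n)=n$. (The vanishing of $\sigma_e^{(I)}(H(A^{(0)}+a))$ also follows directly from Theorem~\ref{thm1}, since $a$ is compactly supported in $x$ and $g'(H_{B_-})[H_{B_-},\chi]=0\in\mathcal T_1$.)

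The hard part will be the spectral input used above: that the magnetic perturbation $a$ does not drag spectrum of $H_{B_-}$ into the shrunk window, i.e. that
\[
\sigma\bigl(H(A^{(0)}+a)\bigr)\cap\,](2m-1)B_- + K_0 d_n(a,B_-),\, (2m+1)B_- - K_0 d_n(a,B_-)[\;=\;\emptyset
\]
for $0\le m\le n+1$ (the left endpoint being $-\infty$ when $m=0$). Writing $\Pi=i\nabla+A^{(0)}$, so that $\Pi^*\Pi=H_{B_-}$ and $H(A^{(0)}+a)=H_{B_-}+V$ with $V=2a\!\cdot\!\Pi+i\div a+|a|^2$, and using that the two components of $\Pi$ are Landau-level ladder operators, one should get, for $z$ at distance $\ge\delta$ from $\sigma(H_{B_-})$ inside a Landau gap,
\[
\bigl\|(H_{B_-}-z)^{-1}\Pi\bigr\|=\bigl\|\Pi(H_{B_-}-\bar z)^{-1}\bigr\|\lesssim\frac{\sqrt{(n+1)B_-}}{\delta},\qquad\bigl\|(H_{B_-}-z)^{-1}\bigr\|\le\frac1\delta,
\]
whence $\|(H_{B_-}-z)^{-1}V\|\lesssim\bigl(\|a\|_\infty\sqrt{(n+1)B_-}+\|\div a\|_\infty+\|a\|_\infty^2\bigr)/\delta\lesssim d_n(a,B_-)/\delta$, where the scaling bounds $\|a\|_\infty\le K_1\sqrt{B_-}$, $\|\div a\|_\infty\le K_2B_-$ (with $B_-$ large and $K_1,K_2$ small) make the last two summands subordinate to the first. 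Choosing $K_0$ above the implied constant, $H(A^{(0)}+a)-z$ is then invertible by a Neumann series for every $z$ in the shrunk window, which is exactly the gap needed. What is left is then routine bookkeeping: checking that $\cA_{\mathrm{Iw}}$ and $\cA_{\mathrm{Iw}}^{(L)}$ genuinely meet the hypotheses of \eqref{sumrule2}, and quoting Section~\ref{sectIw} for the trace-class property of the Iwatsuka edge operators involved.
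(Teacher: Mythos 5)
Your proposal is correct and follows essentially the same route as the paper: the paper deduces the corollary from the sum rule of Theorem~\ref{thm2} (in its first form \eqref{sumab}, giving $\sigma_e^{(I)}(H(a,\cA_{\mathrm{Iw}}))=\sigma_e^{(I)}(H(a,0))+\sigma_e^{(I)}(H(0,\cA_{\mathrm{Iw}}))=0+n$) together with Lemma~\ref{lemgap}, whose proof is exactly the Neumann-series/resolvent argument you sketch as the ``hard part'', including the bound $\|(i\nabla+A^{(0)})R_0(E)\|\lesssim \sqrt{\max(|E|,B_-)}/d$ that produces the $\|a\|_\infty\sqrt{(n+1)B_-}$ term in $d_n(a,B_-)$. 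Your detour through the second sum rule \eqref{sumrule2} with an auxiliary left barrier is an equivalent, slightly longer bookkeeping of the same identity and changes nothing essential.
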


If the interval $I$ does not lie in a gap of the perturbed Hamiltonian
anymore, we have to introduce a regularization of the edge
conductance (see Section \ref{sectregul}).

As a  remark we note that similar results can be
obtained for Hamiltonians mixing the point of view of \cite{CG}
with purely electric potentials (wall and perturbation), and the one
of this work that is purely magnetic potentials (wall and
perturbation). We can indeed perturb an Iwatsuka Hamiltonian by an
electric potential, or perturb a Hamiltonian with an electric
confining potential by a magnetic potential. Proofs are then
similar to those of \cite{CG} and those of the present article,
the most technical case being the purely magnetic model.
%%%%%%%%%%%%%%%%%%%%%%%%%%%%%%%%%%%%%%%%%%%%%%%%%%%
%%%%%%%%%%%%%%%%%%%%%%%%%%%%%%%%%%%%%%%%%%%%%%%%%%%

\section{Spectral properties of the generalized Iwatsuka Hamiltonians}
\label{sectIw}

Denote by ${\mathcal F}$
the partial Fourier transform with respect to $y$, i.e.
    \beq\label{fourier}
    ({\mathcal F}u)(x,k) = (2\pi)^{-1/2} \int_{\re} e^{-iky}
    u(x,y) dy, \quad u \in L^2(\rd).
    \eeq
    Then  the generalized Iwatsuka Hamiltonian  is unitarily equivalent to a direct
    integral of operators with discrete spectrum, i.e.
    \beq\label{directint}
{\mathcal F}  H(\cA_{\mathrm{GIw}}^{(B_-,B_+)}) {\mathcal F}^* =
\int_{\re}^{\oplus} h(k) dk ,
    \eeq
    where
    \beq\label{hk}
    h(k) : = - \frac{d^2}{dx^2} + (k-\beta(x))^2, \quad k \in \re.
    \eeq

    For each $k \in \re$ the spectrum of $h(k)$ is discrete and
simple. Let $\left\{E_j(k)\right\}_{j \in {\mathbb N}}$ be the
increasing sequence of the eigenvalues of the operator $h(k)$, $k
\in \re$. By the Kato  perturbation theory, $E_j$ are real
analytic functions. Evidently,
$$
\sigma(H(\cA_{\mathrm{GIw}}^{(B_-,B_+)})) =
\overline{\bigcup_{j=1}^{\infty}E_j(\re)} ,
$$
where  $\sigma(H(\cA_{\mathrm{GIw}}^{(B_-,B_+)}))$ denotes the
spectrum of $H$.

    In the next proposition we summarize for further references several spectral
    properties of the Iwatsuka Hamiltonian.
    \begin{proposition} \label{propertiesIw} {\rm \cite[Lemmas 2,3, 4.1]{I}}
    Pick $0<B_-<B_+$ and let $\cA_{\mathrm{Iw}}^{(B_-,B_+)}$ be a $(B_-,B_+)$
    Iwatsuka potential, and $\{E_j(k)\}_{j = 1}^{\infty}$ be the eigenvalues defined above. Then we have
    \bel{Ebound}
    (2j-1)B_- \leq E_j(k) \leq (2j-1)B_+, \quad j \in
    {\mathbb N}^*, \quad k \in \re,
    \ee
    and
    \bel{limE}
    \lim_{k \to \pm \infty} E_j(k) = (2j - 1) B_{\pm}, \quad j \in
    {\mathbb N}^*.
    \ee
    As a consequence,
    \beq
    \sigma(H(\cA_{\mathrm{Iw}}^{(B_-,B_+)})) = \bigcup_{j=1}^{\infty} [(2j-1)B_-, (2j-1)B_+].
    \eeq
    In particular, if $B_+\ge 3B_-$, then  $\sigma(H(\cA_{\mathrm{Iw}}^{(B_-,B_+)})) = [B_-, \infty[$.
    \end{proposition}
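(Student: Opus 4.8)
The plan is to work directly with the one‑dimensional fibre operators $h(k) = -\frac{d^2}{dx^2} + (k-\beta(x))^2$ of \eqref{hk}; this is Iwatsuka's result \cite{I}, and I would reconstruct it as follows. Since $B$ is monotone with $\lim_{\pm\infty}B=B_\pm$ and $0<B_-<B_+$, one has $B_-\le B(x)\le B_+$ for every $x$, so $\beta$ is a strictly increasing homeomorphism of $\re$ onto $\re$; write $x_k:=\beta^{-1}(k)$ for the unique zero of the fibre potential $V_k(x)=(k-\beta(x))^2$. Because $\beta(x)\to\pm\infty$ we have $V_k(x)\to+\infty$ as $|x|\to\infty$, hence $h(k)$ has compact resolvent and thus discrete spectrum; the same growth of $V_k$ puts $h(k)$ in the limit‑point case at both endpoints, which forces each eigenspace to be one‑dimensional, i.e.\ the spectrum is simple. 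Finally $k\mapsto h(k)$ is an analytic family of type (A) on the common form domain, so the eigenvalues $E_j$ are real‑analytic in $k$ by Kato's perturbation theory.

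For the two‑sided bound \eqref{Ebound} the key point is that every secant slope of $\beta$ lies between $B_-$ and $B_+$: for all $x$,
\[
B_-^2\,(x-x_k)^2 \;\le\; (k-\beta(x))^2=\Big(\int_x^{x_k} B(s)\,ds\Big)^2 \;\le\; B_+^2\,(x-x_k)^2 ,
\]
simply because $B_-\le B\le B_+$. Hence, as quadratic forms on $\mathcal C_0^\infty(\re)$,
\[
-\frac{d^2}{dx^2}+B_-^2(x-x_k)^2 \;\le\; h(k) \;\le\; -\frac{d^2}{dx^2}+B_+^2(x-x_k)^2 ,
\]
and since the two comparison operators are harmonic oscillators centred at $x_k$, with spectra $\{(2j-1)B_-\}_{j\ge1}$ and $\{(2j-1)B_+\}_{j\ge1}$ respectively, the min--max principle gives at once $(2j-1)B_-\le E_j(k)\le (2j-1)B_+$.

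To obtain the limits \eqref{limE} I would treat $k\to+\infty$ (so $x_k\to+\infty$), the other case being symmetric. Translating by $x_k$, $E_j(k)$ equals the $j$‑th eigenvalue of $-\frac{d^2}{dy^2}+\phi_k(y)^2$ with $\phi_k(y):=\int_0^y B(x_k+u)\,du$. Monotonicity of $B$ with limit $B_+$ gives $\phi_k(y)\to B_+y$ pointwise as $k\to\infty$, while the uniform inequality $\phi_k(y)^2\ge B_-^2 y^2$ supplies harmonic confinement that is uniform in $k$; this is exactly what is needed to promote the pointwise convergence to convergence of each low‑lying eigenvalue, so $E_j(k)\to (2j-1)B_+$ (combined with the upper bound $(2j-1)B_+$ already proved). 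Equivalently one may argue by localization: at bounded energy an eigenfunction of $h(k)$ is, by $V_k\ge B_-^2(x-x_k)^2$, exponentially concentrated in a fixed‑size neighbourhood of $x_k$, on which $\beta'=B$ tends uniformly to $B_+$, and Dirichlet bracketing away from that neighbourhood closes the estimate. I expect this to be the one genuinely technical point: the bounds \eqref{Ebound} are cheap, but pinning down the limiting value amounts to showing that the curvature of $V_k$ at its minimum $x_k$ tends to $B_+$, and that requires a (routine but non‑tautological) concentration‑of‑eigenfunctions argument.

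The description of $\sigma(H(\cA_{\mathrm{Iw}}^{(B_-,B_+)}))$ then follows by assembling fibres. Each $E_j$ is continuous on the connected set $\re$, so $E_j(\re)$ is an interval; by \eqref{Ebound} it lies in $[(2j-1)B_-,(2j-1)B_+]$, and by \eqref{limE} it accumulates at both endpoints, so $\overline{E_j(\re)}=[(2j-1)B_-,(2j-1)B_+]$. Therefore
\[
\sigma(H(\cA_{\mathrm{Iw}}^{(B_-,B_+)}))=\overline{\bigcup_{j\ge1}E_j(\re)}=\bigcup_{j\ge1}\big[(2j-1)B_-,(2j-1)B_+\big],
\]
the last union being closed because $(2j-1)B_-\to\infty$ makes the family of intervals locally finite. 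If moreover $B_+\ge 3B_-$, then for every $j\ge1$ one has $(2j-1)B_+\ge 3(2j-1)B_-\ge(2j+1)B_-$, so consecutive intervals overlap and the union collapses to the half‑line $[B_-,\infty)$.
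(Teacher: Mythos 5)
Your proof is correct, and it is essentially a reconstruction of the argument behind the cited result: the paper itself gives no proof of Proposition \ref{propertiesIw}, referring instead to Iwatsuka \cite{I}, whose proof rests on exactly the two ingredients you use --- form-bracketing of $h(k)$ between harmonic oscillators centred at $x_k=\beta^{-1}(k)$ via the secant bound $B_-\le (\beta(x_k)-\beta(x))/(x_k-x)\le B_+$, and a translation-plus-concentration argument for the limits $E_j(k)\to(2j-1)B_{\pm}$. You correctly identify the only genuinely technical step (upgrading pointwise convergence of the translated potentials to convergence of the low-lying eigenvalues, using the uniform confinement $V_k(x)\ge B_-^2(x-x_k)^2$), and both closing routes you sketch are standard and sound.
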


Assume now that $B_- < 0$ and $B_+ > 0$. Let
$\left\{\mu_j\right\}_{j=1}^{\infty}$ be the non-decreasing
sequence of the eigenvalues of the operator
$$
\left(-\frac{d^2}{dx^2} + B_-^2 x^2\right) \oplus
\left(-\frac{d^2}{dx^2} + B_+^2 x^2\right),
$$
where $-\frac{d^2}{dx^2} + B_{\pm}^2 x^2$ are harmonic
oscillators, self-adjoint in $L^2(\re)$ and essentially
self-adjoint on ${\mathcal C}_0^{\infty}(\re)$.

\begin{proposition} \label{p1} Let $B_- < 0$ and $B_+ > 0$. \\
(i) For each $j \in {\mathbb N}^*$ we have
    \bel{12}
    \lim_{k \to -\infty} E_j(k) = +\infty.
    \ee
    (ii) Assume moreover $\lim_{x \to \pm \infty} B'(x) = 0$.
    Then for each $j \in {\mathbb N}^*$  we have
    \bel{13}
    \lim_{k \to \infty} E_j(k) = \mu_j.
    \ee
\end{proposition}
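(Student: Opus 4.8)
The plan is to analyze the fibre operators $h(k) = -\frac{d^2}{dx^2} + (k - \beta(x))^2$ in the two asymptotic regimes $k \to -\infty$ and $k \to +\infty$ separately, comparing $h(k)$ to limiting harmonic oscillators by controlling the potential $(k-\beta(x))^2$ near the points where it vanishes. Recall that since $B_- < 0 < B_+$ and $B$ is monotone, $B$ must be increasing, so $\beta(x) = \int_0^x B(s)\,ds$ is a convex function with $\beta'(x) = B(x) \to B_\pm$ as $x \to \pm\infty$; hence $\beta(x) \to +\infty$ as $x \to +\infty$ (slope $B_+>0$) and $\beta(x) \to +\infty$ as $x \to -\infty$ as well (slope $B_-<0$), so $\beta$ has a global minimum and $\beta(x) \to +\infty$ at both ends. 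In particular the range of $\beta$ is $[\beta_{\min}, +\infty)$.

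For part (i), the key point is that when $k \to -\infty$ the equation $k - \beta(x) = 0$ has \emph{no solution} once $k < \beta_{\min}$, so the potential $(k-\beta(x))^2 \ge (k - \beta_{\min})^2 \to +\infty$ uniformly in $x$. By the min-max principle, $E_j(k) \ge \inf_x (k-\beta(x))^2 = (k-\beta_{\min})^2 \to +\infty$, which gives \eqref{12}. This step is elementary.

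For part (ii), where $k \to +\infty$, the potential $(k-\beta(x))^2$ vanishes at the two points $x_\pm(k)$ determined by $\beta(x_\pm(k)) = k$, with $x_-(k) \to -\infty$ and $x_+(k) \to +\infty$ as $k \to +\infty$. Near $x_+(k)$ we have $\beta'(x) = B(x) \approx B_+$, so writing $x = x_+(k) + t$ and using $\beta(x) - k \approx B_+ t$ (the correction being controlled by $B'(x) \to 0$, which is exactly where the extra hypothesis $\lim_{x\to\pm\infty} B'(x)=0$ is used), the operator localized near $x_+(k)$ looks like $-\frac{d^2}{dt^2} + B_+^2 t^2$; similarly near $x_-(k)$ it looks like $-\frac{d^2}{dt^2} + B_-^2 t^2$. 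Since $x_+(k) - x_-(k) \to \infty$, the two wells decouple, and one expects the spectrum of $h(k)$ below any fixed level to converge to that of the direct sum $\left(-\frac{d^2}{dx^2}+B_-^2 x^2\right)\oplus\left(-\frac{d^2}{dx^2}+B_+^2 x^2\right)$, i.e. $E_j(k) \to \mu_j$. I would make this rigorous by a two-sided bracketing argument: for the upper bound, use trial functions supported near $x_\pm(k)$ built from the (rescaled, translated) harmonic-oscillator eigenfunctions, with a small error from replacing $(\beta(x)-k)^2$ by $B_\pm^2 (x - x_\pm(k))^2$ on the support; for the lower bound, use a Dirichlet–Neumann bracketing, cutting $\re$ into a neighbourhood of $x_-(k)$, a neighbourhood of $x_+(k)$, and the complement (where the potential is large, contributing nothing below a fixed energy), and on each well estimate the potential from below by a slightly narrower harmonic oscillator.

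The main obstacle is the lower bound in part (ii): one must show that no low-lying eigenfunction can "leak" into the region between the two wells or be affected by the region far to the right of $x_+(k)$ where $\beta(x) - k$ grows only linearly but the approximation $\beta(x)-k \approx B_+(x-x_+(k))$ degrades. This requires uniform (in $k$) control of the error $|\beta(x) - k - B_\pm(x - x_\pm(k))| = \left|\int_{x_\pm(k)}^x (B(s) - B_\pm)\,ds\right|$ on the relevant length scale, which is precisely supplied by $B(s) \to B_\pm$ together with $B'(s) \to 0$; turning this into an honest operator inequality via an IMS-type localization formula, with commutator errors that vanish as $k \to \infty$, is the technical heart of the argument.
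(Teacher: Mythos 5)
Your proposal is correct and is essentially the argument the paper itself invokes: part (i) is exactly the min-max observation the authors refer to (for $k<\beta_{\min}$ the potential satisfies $(k-\beta(x))^2\ge(k-\beta_{\min})^2\to+\infty$ uniformly in $x$), and part (ii) is the standard two-well decoupling argument — trial functions built from translated, rescaled oscillator eigenfunctions for the upper bound and Dirichlet--Neumann/IMS localization near the two zeros $x_\pm(k)$ of $k-\beta$ for the lower bound — which the paper delegates to the proofs of \cite[Theorem 11.1]{CFKS} and \cite[Proposition 3.6]{y}. I see no gaps in your plan.
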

\begin{proof}
Relation \eqref{12} follows easily from the mini-max principle (see also the proof of \cite[Theorem 6.6]{CFKS}).\\
The argument leading to \eqref{13} goes along the general lines of
the proof of \cite[Theorem 11.1]{CFKS} (see also the proof of
\cite[Proposition 3.6]{y}).
\end{proof}

 Now we are in position to prove Theorem~\ref{thmvalue}.
Arguing as in the proof of \cite[Proposition 1]{CG}\footnote{Note that in equations (3.5) -- (3.6) of \cite{CG} there is a missing factor $-\frac{1}{2\pi}$ at the r.h.s. of (3.6).} , we have
     \begin{align}
     -2\pi \tr (g'(H(\cA_{\mathrm{GIw}}^{(B_-,B_+)})) i[H(\cA_{\mathrm{GIw}}^{(B_-,B_+)}), \chi]) &=
     -\sum_{j \in {\mathbb N}} \int_{\re}
     g'(E_j(k)) E_j'(k) dk  \\
     &=
\sum_{j \in {\mathbb N}}g(E_j(-\infty)) - g(E_j(+\infty)).
   \end{align}
The result then follows from the spectral properties of generalized Iwatsuka Hamiltonians, namely from
\eqref{limE} of Proposition~\ref{propertiesIw} if $B_-B_+>0$, and
from Proposition~\ref{p1}  if $B_-B_+<0$.

%%%%%%%%%%%%%%%%%%%%%%%%%%%%%%%%%%%%%%%%%%%%%%%%%%%%%
%%%%%%%%%%%%%%%%%%%%%%%%%%%%%%%%%%%%%%%%%%%%%%%%%%%%%

\section{Perturbation by a magnetic potential supported on a strip}
\label{sectcompact}
%\label{sectTh13cor14}

  \subsection{More on magnetic fields and magnetic potentials}
This subsection contains  well-known facts about the possibility
 to construct magnetic potentials $A:
\rd \to \rd$ with prescribed properties, which generate given
magnetic fields $B: \re^2 \to \re$.
    In the first proposition we define a magnetic potential $A$ in
the so-called {\em Poincar\'e gauge}.
\begin{proposition} \label{p11} {\rm \cite[Eq. (8.154)]{T}}
 Let  $B \in {\mathcal C}^1(\rd;\re)$. Then the potential
    \bel{g70}
A = (A_1, A_2) = \left(-y\int_0^1 sB(sx,sy) ds, \; x\int_0^1
sB(sx,sy) ds\right), \quad (x,y) \in \rd,
    \ee
generates the magnetic field $B$.
\end{proposition}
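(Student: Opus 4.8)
The plan is to verify \eqref{3} by a direct computation, differentiating under the integral sign. This interchange of differentiation and integration is legitimate because $B\in{\mathcal C}^1(\rd;\re)$: on any set $[0,1]\times K$ with $K\subset\rd$ compact, the integrand $s\mapsto sB(sx,sy)$ and its partial derivatives with respect to $x$ and $y$ are continuous, and the $s$-integration runs over the compact interval $[0,1]$, so the standard dominated-convergence (or uniform-continuity-on-compacts) criterion applies.

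First I would compute, writing $\partial_1 B$ and $\partial_2 B$ for the partial derivatives of $B$ with respect to its first and second arguments,
\begin{align*}
\frac{\partial A_2}{\partial x}(x,y) &= \int_0^1 s\,B(sx,sy)\,ds + x\int_0^1 s^2\,(\partial_1 B)(sx,sy)\,ds,\\
\frac{\partial A_1}{\partial y}(x,y) &= -\int_0^1 s\,B(sx,sy)\,ds - y\int_0^1 s^2\,(\partial_2 B)(sx,sy)\,ds.
\end{align*}
Subtracting yields
\[
\frac{\partial A_2}{\partial x} - \frac{\partial A_1}{\partial y}
= \int_0^1 \Big( 2s\,B(sx,sy) + s^2\big(x\,(\partial_1 B)(sx,sy) + y\,(\partial_2 B)(sx,sy)\big)\Big)\,ds.
\]

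The observation that closes the argument is that this integrand is an exact derivative in $s$: by the chain rule,
\[
\frac{d}{ds}\Big(s^2 B(sx,sy)\Big) = 2s\,B(sx,sy) + s^2\big(x\,(\partial_1 B)(sx,sy) + y\,(\partial_2 B)(sx,sy)\big).
\]
Hence the integral telescopes, $\displaystyle\frac{\partial A_2}{\partial x} - \frac{\partial A_1}{\partial y} = \big[s^2 B(sx,sy)\big]_{s=0}^{s=1} = B(x,y)$, which is precisely \eqref{3}. There is no genuine obstacle: the only point deserving a word of care is the differentiation under the integral sign just discussed, and the computation itself is the classical proof of the Poincaré lemma specialized to a ray through the origin; the statement coincides with \cite[Eq. (8.154)]{T}.
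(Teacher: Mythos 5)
Your computation is correct and complete: the differentiation under the integral sign is justified as you say, the identity $\frac{d}{ds}\bigl(s^{2}B(sx,sy)\bigr)=2sB(sx,sy)+s^{2}\bigl(x(\partial_1B)(sx,sy)+y(\partial_2B)(sx,sy)\bigr)$ is the key point, and the evaluation of the resulting exact integral gives $B(x,y)$ as required. The paper itself offers no proof of this proposition, simply citing \cite[Eq.\ (8.154)]{T}; your argument is precisely the standard verification of the Poincar\'e-gauge formula that underlies that reference, so there is nothing to add.
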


\begin{proposition} \label{p12}
Let $B \in {\mathcal C}^k(\rd;\re)$, $k \in {\mathbb N}^*$, satisfy $B({\bf x}) = 0$ for
${\bf x} = (x,y) \in \re^2$ with $|x| \geq R_0$, $R_0 > 0$. Then
there exists a magnetic potential ${\mathcal A} \in {\mathcal C}^k(\rd;\rd)$ which generates
$B$, and vanishes identically on $\{(x,y) \in \rd \, | \, |x| \geq
R_0\}$.
\end{proposition}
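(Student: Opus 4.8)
The plan is to reduce the two-dimensional construction to two independent one-dimensional problems, exploiting the fact that the obstruction to finding a compactly supported potential is measured by magnetic fluxes through horizontal strips. First I would write down a natural ansatz for the potential: since $B$ vanishes for $|x|\ge R_0$, I want $\mathcal A = (0, \mathcal A_2)$ with $\mathcal A_2(x,y) = \int_{-R_0}^{x} B(s,y)\,ds$ for $x \ge -R_0$ (and $\mathcal A_2 \equiv 0$ for $x \le -R_0$). This immediately gives $\partial_x \mathcal A_2 - \partial_y \mathcal A_1 = B$ and $\mathcal A_2 \equiv 0$ on $\{x \le -R_0\}$, and inherits $\mathcal C^k$ regularity from $B$ by differentiating under the integral sign. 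The only defect is that $\mathcal A_2$ need not vanish on the right half-strip $\{x \ge R_0\}$: there $\mathcal A_2(x,y) = \Phi(y) := \int_{-R_0}^{R_0} B(s,y)\,ds$, the total flux density across the horizontal line at height $y$.

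The key step is then to kill this residual term by a gauge transformation. I would look for a scalar function $\varphi \in \mathcal C^{k+1}(\rd;\re)$ with $\nabla\varphi$ compactly supported in $x$, such that $\mathcal A - \nabla\varphi$ vanishes on both $\{x\le -R_0\}$ and $\{x\ge R_0\}$; since gauge transforms do not change the generated magnetic field, $\mathcal A' := \mathcal A - \nabla\varphi$ would be the desired potential. Concretely, pick a smooth switch function $\rho$ with $\rho\equiv 0$ for $x\le -R_0$ and $\rho \equiv 1$ for $x \ge R_0$, and set $\varphi(x,y) = \rho(x)\,\Psi(y)$ where $\Psi$ is an antiderivative of $\Phi$, i.e. $\Psi'(y) = \Phi(y)$. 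Then $\partial_y\varphi = \rho(x)\Phi(y)$, which equals $\mathcal A_2$ on $\{x\ge R_0\}$ and vanishes on $\{x\le -R_0\}$, while $\partial_x\varphi = \rho'(x)\Psi(y)$ is supported in $|x|\le R_0$. Hence $\mathcal A' = (-\partial_x\varphi,\ \mathcal A_2 - \partial_y\varphi)$ has first component supported in $|x|\le R_0$ and second component vanishing for $|x|\ge R_0$; in particular $\mathcal A'$ vanishes identically on $\{|x|\ge R_0\}$, and $\mathcal A' \in \mathcal C^k$ since $\Psi \in \mathcal C^{1}$ of $\Phi \in \mathcal C^k$ gives $\Psi \in \mathcal C^{k+1}$ and $\varphi \in \mathcal C^{k+1}$, so $\nabla\varphi \in \mathcal C^k$.

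The step I expect to be the main obstacle — or at least the one requiring the most care — is the regularity and growth bookkeeping in the $y$-variable: $\Phi$ is only as smooth as $B$ and a priori bounded (if $B$ is), so $\Psi$ grows at most linearly in $y$, which is harmless for the construction but should be stated, since the statement only claims $\mathcal C^k$ and nothing about decay. One should also double-check the case $k$ finite versus the edge effects at $|x|=R_0$: the ansatz $\mathcal A_2 = \int_{-R_0}^x B(s,y)\,ds$ is automatically $\mathcal C^k$ across $x=-R_0$ because $B$ and all its $x$-derivatives up to order $k-1$ vanish there (as $B\equiv 0$ for $x<-R_0$ and $B\in\mathcal C^k$), and similarly at $x=R_0$; this vanishing-to-order-$k$ matching is the one genuinely technical point. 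I would also remark that this is essentially a standard fact (the hypothesis $B\in\mathcal C^k$ with compact $x$-support forces no net obstruction because horizontal strips are simply connected), so the proof is short once the gauge function $\varphi = \rho(x)\Psi(y)$ is produced.
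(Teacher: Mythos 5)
Your proof is correct, and it rests on the same core idea as the paper's: modify a potential generating $B$ by a gradient so that it vanishes on the two half-planes where $B$ is zero. The implementations differ in a way worth noting. The paper starts from an arbitrary potential $A$ generating $B$ (e.g.\ the Poincar\'e gauge), invokes simple connectedness of $S_\pm=\{\pm x> R_0\}$ to write $A=\nabla F_\pm$ there, and then appeals to the existence of a global extension ${\mathcal F}\in{\mathcal C}^{k+1}$ agreeing with $F_\pm$ on $S_\pm$; the gauge function is thus produced abstractly. You instead start from the specific potential $(0,\int_{-R_0}^x B(s,y)\,ds)$, which is already zero on the left half-plane, so the only residue is the flux density $\Phi(y)$ on the right, and you kill it with the fully explicit gauge $\varphi=\rho(x)\Psi(y)$, $\Psi'=\Phi$. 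Your route is more constructive: it avoids the extension step entirely (which in the paper is asserted rather than carried out) and makes the regularity bookkeeping transparent, at the cost of being tied to the strip geometry, whereas the paper's argument is the one that generalizes to $B$ vanishing outside an arbitrary set with simply connected complement components. Your two flagged ``delicate points'' are in fact non-issues: the formula $\int_{-R_0}^x B(s,y)\,ds$ is a single globally valid expression whose derivatives up to order $k$ are continuous (no matching across $x=\pm R_0$ is needed), and the linear growth of $\Psi$ in $y$ is harmless since the statement claims no decay and the applications (Theorem~\ref{thm1}) tolerate polynomial growth in $y$.
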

\begin{proof}
Pick any magnetic potential $A \in {\mathcal C}^k(\rd;\rd)$ which
generates $B$ (say, the potential appearing in \eqref{g70}). Set
$$
S_- : = \{(x,y) \in \rd \, | \, x <  -R_0\}, \quad S_+ : = \{(x,y)
\in \rd \, | \, x >  R_0\}.
$$
Since $S_{\pm}$ are simply connected domains, and $B$ identically
vanishes on them, there exist functions $F_{\pm} \in {\mathcal
C}^{k+1}(\overline{S}_{\pm}; \re)$ such that
$$
\nabla F_- = A \quad {\rm on} \quad S_-, \quad \nabla F_+ = A
\quad {\rm on} \quad S_+.
$$
Then there exists an extension ${\mathcal F} \in {\mathcal
C}^{k+1}(\rd; \re)$ such that
$$
{\mathcal F} = F_-  \quad {\rm on} \quad S_-, \quad {\mathcal F} =
F_+  \quad {\rm on} \quad S_+.
$$
On $\rd$ define ${\mathcal A} : = A - \nabla {\mathcal F}$.
Evidently,
  the magnetic potential ${\mathcal A} \in {\mathcal C}^k(\rd;\rd)$
  generates $B$, and ${\mathcal A}({\bf x}) = 0$ for ${\bf x} \in S_- \cup S_+$.
\end{proof}

\subsection{Proof of Theorem~\ref{thm1} and Corollary~\ref{corstrip} }

\begin{lemma}\label{lemgauge}
Let $A \in {\mathcal C}^2(\rd;\rd)$. Assume that $a \in {\mathcal
C}^1(\rd; \rd)$  is supported on the strip $[-x_0,x_0]\times \R$
and admits the bound $|a(x,y)|\le C_a \scal{y}^k$, for some $k\ge
0$ and $C_a<\infty$. Set
    \bel{g46}
    F(x,y) = -\int_0^y a_2(x,s) ds, \quad (x,y) \in \rd.
\ee
    Then we have
    \bel{g47}
    [H(A+a+\nabla F),\chi]=[H(A),\chi].
\end{equation}
\end{lemma}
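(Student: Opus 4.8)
The plan is to exploit the gauge covariance of magnetic Schr\"odinger operators together with the fact that the commutator with $\chi$ only ``sees'' the $y$-derivative of the gauge function. The key observation is that $H(A+a+\nabla F)$ is unitarily equivalent to $H(A+a)$ via the gauge transformation $u \mapsto e^{iF}u$; but that is not quite what is claimed here. What is actually claimed is the operator identity \eqref{g47}, and the point is that by construction $\partial_y F = -a_2$, so that the second component of the total magnetic potential $A+a+\nabla F$ equals $A_2 + a_2 + \partial_y F = A_2$. In other words, choosing $F$ as in \eqref{g46} exactly cancels the $y$-component of the perturbation $a$, and the first component becomes $A_1 + a_1 + \partial_x F$.

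Concretely, I would first write out $H(A+a+\nabla F)$ explicitly. With $\tilde A := A + a + \nabla F = (\tilde A_1, A_2)$ where $\tilde A_1 = A_1 + a_1 + \partial_x F$, we have
\[
H(\tilde A) = (i\partial_x + \tilde A_1)^2 + (i\partial_y + A_2)^2.
\]
Next I would compute the commutator $[H(\tilde A), \chi]$. Since $\chi = \chi(y)$ is $x$-independent and bounded with bounded derivatives (it is an $x$-translation invariant switch function), $\chi$ commutes with $\partial_x$, with $\tilde A_1$ (a multiplication operator), and with $A_2$ (a multiplication operator). Hence the only surviving contribution comes from the $(i\partial_y + A_2)^2$ block, and a direct computation gives
\[
[H(\tilde A),\chi] = [(i\partial_y + A_2)^2, \chi] = i\{(i\partial_y + A_2), \chi'\} ,
\]
an expression that depends only on $A_2$ and $\chi'$, not on $\tilde A_1$. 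Performing the identical computation for $H(A) = (i\partial_x + A_1)^2 + (i\partial_y + A_2)^2$ yields the same right-hand side, since the second blocks coincide. This gives \eqref{g47}.

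The step requiring the most care is justifying these manipulations at the level of operators (not just formally on $\mathcal C_0^\infty$): one needs that $F \in \mathcal C^2$ so that $\nabla F$ is a genuine $\mathcal C^1$ magnetic potential and $H(A+a+\nabla F)$ is well-defined with a reasonable core, and one needs the polynomial bound $|a(x,y)| \le C_a\langle y\rangle^k$ together with compact support in $x$ to control $F$ and its derivatives (in particular $\partial_x F(x,y) = -\int_0^y \partial_x a_2(x,s)\,ds$ is again polynomially bounded in $y$ and compactly supported in $x$, so $\tilde A_1$ is an admissible potential). The main obstacle is thus not the algebra of the commutator, which is short, but verifying that $\chi$ maps the relevant operator domains into themselves so that the commutator identity holds as an identity of operators on a common core; this is handled by the regularity and growth hypotheses on $a$ and by the boundedness of $\chi, \chi'$, and $\chi''$. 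Once the commutator is reduced to the $\partial_y$-block, the equality \eqref{g47} is immediate because that block is literally unchanged by adding $\nabla F$ with $\partial_y F = -a_2$.
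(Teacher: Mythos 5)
Your proof is correct and rests on the same key observation as the paper's: since $\chi$ depends only on $y$, the commutator $[H,\chi]$ involves only the second component of the vector potential, and the gauge term $\partial_y F=-a_2$ is chosen precisely to cancel $a_2$. The paper phrases this as the identity $[H(A+\tilde a),\chi]-[H(A),\chi]=2i\tilde a_2\chi'$ for a general perturbation $\tilde a$, which is the same computation as your block decomposition.
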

\begin{proof}
Note that if $\tilde{a}=(\tilde{a}_1,\tilde{a}_2) \in
\mathcal{C}^1(\rd, \rd)$, we have
 \beq \label{expand}
    H(A+\tilde{a}) - H(A) = 2 \tilde{a}\cdot  (i\nabla + A) + i {\rm div}\,\tilde{a} + |\tilde{a}|^2.
\eeq Since $\partial_x \chi=0$, a direct computation shows that
    \bel{g45}
    [H(A+\tilde{a}),\chi] - [H(A),\chi] = 2i \tilde{a}\cdot \nabla
    \chi = 2i \tilde{a}_2\chi'.
    \ee
    Therefore,
\beq\label{condcom} [H(A+\tilde{a}),\chi]-[H(A),\chi]=0
\Longleftrightarrow \tilde{a}_2\chi'=0. \eeq Applying \eqref{g45}
-- \eqref{condcom} with $\tilde{a} = a + \nabla F$, and taking
into account that in this case $\tilde{a}_2 = a_2 + \partial_y F =
0$ by \eqref{g46}, we obtain \eqref{g47}.
\end{proof}

\begin{remark}\label{remstrip}
Note that $a+\nabla F$ is supported on $[-x_0,x_0]\times \R$, and
$|a+\nabla F| \leq C_a \scal{y}^{k+1}$, $(x,y) \in \rd$.
\end{remark}

\begin{proposition}\label{propstrip} Let  $A, a$ be as in Lemma \ref{lemgauge}.
Then
$$
\left(g'(H(A+a ))-
g'(H(A))\right)[H(A),\chi] \in \mathcal{T}_1.
$$
Moreover if $[H(A+a),\chi]=[H(A),\chi]$, then
\begin{equation}
\tr\left(g'(H(A+a))- g'(H(A))\right)[H(A),\chi] =0. \label{may2}
\end{equation}
\end{proposition}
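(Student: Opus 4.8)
The plan is to reduce the trace-class statement to the known resolvent estimates used throughout the edge-conductance literature, and then handle the vanishing of the trace by a symmetry/parity argument in the $y$ variable.

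First I would establish the trace-class claim. Write $R(z) = (H(A)-z)^{-1}$ and $R_a(z) = (H(A+a)-z)^{-1}$ for $z$ in the resolvent set, and use the Helffer--Sj\"ostrand formula to represent $g'(H(A+a)) - g'(H(A))$ as an integral over $z$ of $\bar\partial \tilde g'(z)$ times $R_a(z) - R(z)$, where $\tilde g'$ is an almost-analytic extension of $g' \in C_0^\infty(\re)$. The resolvent difference is $R_a(z)\big(H(A) - H(A+a)\big)R(z)$, and by \eqref{expand} with $\tilde a = a$ the operator $H(A)-H(A+a) = -2a\cdot(i\nabla + A) - i\,\mathrm{div}\,a - |a|^2$. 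Since $a$ is compactly supported in $x$ and polynomially bounded in $y$, and since $[H(A),\chi] = 2i(i\nabla+A)_2\chi' - \chi''$ (using $\partial_x\chi=0$) is itself localized in $y$ through $\chi'$, the product will contain factors that decay (or are compactly supported) in $x$ against factors decaying in $y$; combined with the local smoothing provided by resolvents of magnetic Schr\"odinger operators this yields a Hilbert--Schmidt $\times$ Hilbert--Schmidt decomposition. The clean way is to invoke the trace estimates collected in the Appendix (Section~\ref{sectAuxiliary}): a product of the form (resolvent)$\times$(function compactly supported in $x$)$\times$(resolvent)$\times$(function of $y$ with $\chi'$ support) is trace class. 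The main technical point is bookkeeping of the polynomial growth in $y$ of $a$ and $\mathrm{div}\,a$ against the Gaussian-type decay afforded by resolvents of $H(A)$ when $A$ is a generalized Iwatsuka potential; this is routine given the Appendix lemmas but is where the real work sits.

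For the vanishing of the trace in \eqref{may2} under the extra hypothesis $[H(A+a),\chi] = [H(A),\chi]$, I would argue as follows. Let $T := \big(g'(H(A+a)) - g'(H(A))\big)[H(A),\chi]$. Because $\chi$ is the \emph{increasing} switch function with $\chi' \geq 0$ and $[H(A),\chi] = i\,(2(i\partial_y + A_2 + a_2^{(F)})\chi' - \cdots)$ is, up to the smoothing operators, essentially the multiplication by $\chi'$ flanked by the $y$-momentum, one can write $[H(A),\chi] = i[P_2,\chi]$ where $P_2$ is the (gauge-covariant) $y$-component of the momentum; then by cyclicity $\tr T = i\,\tr\big([ (g'(H(A+a))-g'(H(A))) , \chi ] P_2\big)$ once one checks $(g'(H(A+a))-g'(H(A)))P_2\chi'$ and $\chi'(\cdots)$ are separately trace class, which follows from the same Appendix estimates. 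Alternatively, and more robustly, I would follow the strategy of \cite[Theorem~1]{CG}: write $\tr T = \tr\big(g'(H(A+a))[H(A),\chi]\big) - \tr\big(g'(H(A))[H(A),\chi]\big)$, and under the assumption $[H(A+a),\chi] = [H(A),\chi]$ the first trace equals $\tr\big(g'(H(A+a))[H(A+a),\chi]\big) = -\frac{1}{2\pi}\sigma_e^{(I)}(H(A+a))$ while the second is $-\frac{1}{2\pi}\sigma_e^{(I)}(H(A))$; the equality of these two edge conductances is exactly the content we want, so this route would be circular. Hence the correct route is the commutator/cyclicity computation: express $[H(A),\chi]$ as $i[H(A),\chi]$ (trivially) and use that $g'(H(A))$ commutes with $H(A)$, so $\tr\big(g'(H(A))i[H(A),\chi]\big) = \tr\big(i[g'(H(A)),\chi]H(A)\big) \cdot (\text{carefully}) $ --- more precisely one uses that for a function $F$ of $H$ with $F'$ compactly supported, $\tr\big(F'(H)i[H,\chi]\big)$ is computed fiberwise after partial Fourier transform in $y$ and telescopes; but here, \emph{because} $[H(A+a),\chi]=[H(A),\chi]$, the operator $T = \big(g'(H(A+a))-g'(H(A))\big)[H(A),\chi]$ has trace equal to $\tr\big(g'(H(A+a))[H(A+a),\chi]\big) - \tr\big(g'(H(A))[H(A),\chi]\big)$ and \emph{this} difference is shown to vanish by the Birman--Krein / spectral shift argument: both $H(A)$ and $H(A+a)$ have the same fibered structure in the region where $g'$ is supported once $a$ is removed from the $y$-component.

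The step I expect to be the genuine obstacle is the vanishing \eqref{may2}, not the trace-class property. The honest argument is the one sketched in Remark~\ref{remKAa}: the decomposition \eqref{decompremKAa} shows that the dangerous extra term $2i g'(H(A+a))a_2\chi'$ is precisely what is killed by passing to the gauge $F$ of Lemma~\ref{lemgauge}, so under the standing hypothesis $[H(A+a),\chi]=[H(A),\chi]$ (equivalently $a_2\chi'=0$ by \eqref{condcom}, after the gauge transform) the term $g'(H(A+a))[H(A+a)-H(A),\chi]$ drops out, and one is left with $\big(g'(H(A+a))-g'(H(A))\big)[H(A),\chi]$ whose trace vanishes by the same cyclicity argument as in \cite[Theorem~1]{CG} (there the velocity operators coincide, which is exactly our situation once $a_2\chi'=0$). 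So I would: (i) prove trace-class via Helffer--Sj\"ostrand plus the Appendix estimates; (ii) under $[H(A+a),\chi]=[H(A),\chi]$, invoke the argument of \cite[Theorem~1]{CG} verbatim, since the only obstruction there (differing velocity operators) is absent. The main obstacle is making step (ii) rigorous: one must justify that $g'(H(A+a))[H(A),\chi]$ and $g'(H(A))[H(A),\chi]$ individually have well-defined traces (they may not, only the difference need be trace class), so the cyclicity manipulation has to be carried out at the level of the trace-class \emph{difference}, using that $[g'(H(A+a)) - g'(H(A))]$ maps into a sufficiently nice class and commuting it past $\chi$ produces a commutator $[\,\cdot\,,\chi]$ that is again controlled; this is the delicate regularization point that \cite{CG} handles and that we inherit.
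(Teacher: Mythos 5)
Your overall strategy (Helffer--Sj\"ostrand representation, the Appendix trace estimates, then a cyclicity argument in the spirit of \cite[Theorem~1]{CG}) points in the right direction, but there are three concrete gaps. First, the Helffer--Sj\"ostrand formula you write down, applied to $g'$ with the first-order resolvent difference $R_a(z)-R(z)=-R_a(z)\,\W\,R(z)$, does not yield trace class: since $\W=H(A+a)-H(A)$ is a \emph{first-order} differential operator, Proposition~\ref{gp1} only gives $R\zeta_0\in\mathcal{T}_2$ and $\W R\in\mathcal{T}_4$, so the best one can extract from two resolvents is $\mathcal{T}_{4/3}$. The paper applies Helffer--Sj\"ostrand to an antiderivative $G$ of $g$, so that $g'(H_2)-g'(H_1)$ is expressed through $R_1^3-R_2^3$ as in \eqref{HS3}; the extra power of the resolvent is precisely what absorbs the first-order perturbation, and this is flagged explicitly in the text. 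Second, the vanishing of the trace is the heart of the proposition and is not actually proved in your proposal: the parity and Birman--Krein suggestions do not apply ($a$ depends on $y$, so $H(A+a)$ has no fibered structure), and ``invoke \cite{CG} verbatim'' is not available either, because with a first-order $\W$ the expansion of $2(R_1^3-R_2^3)[H_1,\chi]$ produces the six terms \eqref{g62}--\eqref{g67} plus the extra term \eqref{cube3} containing $[\W,\chi]$. The paper pairs those six terms so that each pair is the trace of a commutator of a trace-class operator with a bounded one (hence zero by Lemma~\ref{gl3}, after inserting the cutoffs $\zeta_0,\zeta_1$), and kills \eqref{cube3} by the disjointness of $\supp\chi'$ and $\supp a_{\ge r}$. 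This combinatorial rearrangement is the genuinely new content relative to \cite{CG} and is absent from your argument.

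Third, since $a$ is only polynomially bounded in $y$, the compact-support hypotheses of the Appendix estimates are not met directly, and the trace identity above is only established for compactly supported perturbations. The paper therefore cuts off $a=a_{\le r}+a_{\ge r}$ in the $y$ variable, proves $\tr\bigl(g'(H(A+a_{\le r}))-g'(H(A))\bigr)[H(A),\chi]=0$ for each $r$, and then shows the remainder satisfies $\bigl\|\bigl(g'(H(A+a))-g'(H(A+a_{\le r}))\bigr)[H(A),\chi]\bigr\|_1\le Cr^{-p}$ by combining the uniform Schatten bounds of Proposition~\ref{gp1} with Combes--Thomas estimates that exploit the distance between $\supp a_{\ge r}$ and $\supp\chi'$. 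Your remark about ``bookkeeping the polynomial growth in $y$ against the decay of the resolvents'' gestures at this, but the cutoff-and-limit structure in $r$ is a necessary architectural step of the proof --- it is what allows the trace to be computed at all --- not a routine detail to be absorbed into the estimates.
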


Assuming for the moment the validity of
Proposition~\ref{propstrip}, we provide the proof of
Theorem~\ref{thm1}.

\begin{proof}[Proof of Theorem~\ref{thm1}]
The fact that the operator defined in \eqref{kaa} is trace class
follows from the decomposition \eqref{decompremKAa} of
Remark~\ref{remKAa}, the fact that $2ig'(H(A+a)) a_2 \chi' \in {\mathcal T}_1$, and the first part of Proposition~\ref{propstrip}.\\
 Further, in order to prove \eqref{may2} we introduce  a
gauge transform $\exp(iF)$, where $F$ is given by
Lemma~\ref{lemgauge}. By Proposition~\ref{propstrip} applied to
the perturbation $a+\nabla F$,  the operator
$$
g'(H(A+a+\nabla
F))[H(A),\chi]=g'(H(A+a+\nabla F))[H(A+a+\nabla F),\chi]
$$
is trace-class since $g'(H(A))[H(A),\chi]$ is by the hypotheses of Theorem \ref{thm1}.

Now, since $F$ and $\chi$ commute, we have
\begin{align}
& g'(H(A+a))i[H(A+a), \chi] \\
&=
e^{-iF}(e^{iF}g'(H(A+a))e^{-iF} e^{iF} i[H(A+a), \chi]e^{-iF}) e^{iF}\\
& =  e^{-iF}(g'(H(A+a + \nabla F))i[H(A+a + \nabla F), \chi])e^{iF},
\end{align}
which is trace class, so that $g'(H(A+a))i[H(A+a), \chi]\in\mathcal{T}_1$. It follows,
using the cyclicity of the trace, that
\begin{align}
&\tr g'(H(A+a))i[H(A+a), \chi] - \tr g'(H(A))i[H(A), \chi] \\
& \quad=
\tr (g'(H(A+a + \nabla F)) - g'(H(A)))i[H(A), \chi] \\
&\quad =0,
\end{align}
by Lemma~\ref{lemgauge} and Proposition~\ref{propstrip}.
\end{proof}

The rest of the section is devoted to the proofs of
Proposition~\ref{propstrip} and Corollary~\ref{corstrip}.

\begin{proof}[Proof of Proposition~\ref{propstrip}]
 Let $\varphi_r:\R^2\to\R^2$ be the smooth cut-off function
 satisfying $\varphi_r(x,y)=\varphi_r(y)$,
 $\varphi_r = 1$ for $|y|\le r-1$ and $\varphi_r = 0$ for $|y|\ge r$, $r>1$.
 We decompose ${a}= {a} \varphi_r +
{a}(1-\varphi_r) := a_{\le r} + a_{\ge r}$.
 Then we have
$$
\tr \left(g'(H(A+{a}))- g'(H(A))\right) [H(A),\chi] =
$$
    $$
    \tr\left(g'(H(A+{a}))- g'(H(A+a_{\le r}))\right)
[H(A),\chi]  +
$$
    \bel{g50}
 \tr\left(g'(H(A+a_{\le r}))- g'(H(A))\right) [H(A),\chi].
    \ee
    First, we will show that
    \bel{g51}
    g'(H(A+a_{\le r}))- g'(H(A) [H(A),\chi] \in
    \mathcal{T}_1,
    \ee
    and
    \bel{g52}
 \tr\left(g'(H(A+a_{\le r}))- g'(H(A))\right) [H(A),\chi]  = 0.
    \ee
      After that we will show that for some $C<\infty$ and $p\ge 1$,
    \beq\label{decaystrip}
    \left\|\left(g'(H(A+{a}))- g'(H(A+a_{\le
r}))\right) [H(A),\chi] \right\|_1 \le C r^{-p}
    \eeq
    for  $r$ large enough.\\
Let us now prove \eqref{g51}.
 By the
Helffer-Sj\"ostrand functional calculus (see e.g.
\cite[Lemma~B.2]{HS}), applied to function
$G(x):=-\int_{x}^{\infty} g(s)ds$, we have
\begin{eqnarray}\label{HS3}
g'(H(A+a_{\le r}))- g'(H(A))=
-\dfrac{2}{\pi}\int_{\rd} \overline{\partial}
\tilde{G}(u+iv)(R_1^3-R_2^3)dudv,
\end{eqnarray}
where $R_1=(H_j-z)^{-1}$, $j=1,2$, and $H_1 : = H(A)$, $H_2 : =
H(A+a_{\le r})$. Put
$$
\W:=H_2-H_1= 2a_{\le r}\cdot(i\nabla +A) + i {\rm
div}\,a_{\le r} + |a_{\le r}|^2.
$$
Note that due to the fact that  $\W$ is a first-order differential operator,
we need one extra power of the resolvents in comparison to
\cite{CG}. Thus, we have to analyze the operator $(R_1^3-R_2^3)[H(A),\chi]$. It is easy to check that
\begin{align}
          2( R_1^3-R_2^3)=
          &R_1^2\W R_2R_1+R_1\W R_2^2R_1+R_1\W R_2R_1^2\\
          &+R_2^2 R_1\W R_2+R_2 R_1^2\W R_2+R_2 R_1\W R_2^2,
\end{align}
which could be formally  guessed by  computing the derivative
$\partial_{z}( R_1^2-R_2^2)=\partial_{z}(R_1\W R_2R_1+R_2 R_1\W
R_2)$. We have
\begin{align}
&2(R_1^3-R_2^3)[H_1,\chi]  \\
& = \left( R_1^2\W R_2R_1+R_1\W R_2^2R_1+R_1\W R_2R_1^2\right) [H_1,\chi]  \label{cube1}
  \\
& \quad+  \left(R_2^2 R_1\W R_2+R_2 R_1^2\W R_2+R_2 R_1\W R_2^2\right) [H_2,\chi] \label{cube2} \\
 &\quad -  \left(R_2^2 R_1\W R_2+R_2 R_1^2\W R_2+R_2 R_1\W R_2^2\right) [\W,\chi] \label{cube3}.
 \end{align}
We recall that $\,a_{\le r}$ is compactly supported. Applying Corollary \ref{gf1} and Lemma \ref{gl1} (ii), and bearing
in mind that the operators $R_j[H_j,\chi]$, $j=1,2$, are bounded,
we find that all the terms on the r.h.s. of \eqref{cube1}- \eqref{cube3}  are
trace-class, which combined with \eqref{HS3} implies \eqref{g51}.

Next, we prove \eqref{g52}. Using the identities $R_j[H_j,\chi]R_j = [\chi,R_j]$, $j=1,2$, undoing the commutators, and introducing obvious notations, we get
        \bel{g62}
              {\rm tr}\,(R_1^2\W R_2R_1)[H_1,\chi] = {\rm tr}\, [\chi,R_1] R_1 W R_2 =: I_1+I_2,
              \ee
               \bel{g63}
              {\rm tr}\,(R_1\W R_2^2R_1)[H_1,\chi] = {\rm tr}\, [\chi,R_1] W R_2^2 =: II_1+II_2,
              \ee
               \bel{g64}
              {\rm tr}\,(R_1\W R_2R_1^2)[H_1,\chi] = {\rm tr}\, [\chi,R_1]  W R_2 R_1 =: III_1+III_2,
              \ee
        \bel{g65}
              {\rm tr}\,(R_2^2 R_1\W R_2)[H_2,\chi]= {\rm tr}\, [\chi,R_2] R_2 R_1 W =: IV_1+IV_2,
              \ee
               \bel{g66}
              {\rm tr}\,(R_2 R_1^2\W R_2)[H_2,\chi]= {\rm tr}\, [\chi,R_2] R_1^2 W  =: V_1+V_2,
              \ee
               \bel{g67}
              {\rm tr}\,(R_2 R_1\W R_2^2)[H_2,\chi]= {\rm tr}\, [\chi,R_2] R_1 W R_2 =: VI_1+VI_2.
              \ee
            Let $0 \leq \zeta_j \in {\mathcal C}_0^{\infty}(\rd)$, $j = 0,1$, satisfy $\zeta_0 a_{\leq r} = a_{\leq r}$ and $\zeta_1 \zeta_0 = \zeta_0$ on $\rd$. Rearranging the terms \eqref{g62} -- \eqref{g67} of $2 \, {\rm tr}\,(R_1^2 - R_2^3) [H_1, \chi]$  and applying Lemma \ref{gl1} and Lemma \ref{gl3}, we get
            \begin{align}
            I_1 + V_2 & =
            {\rm tr} [\chi R_1^2 W, R_2] = {\rm tr} [\chi R_1^2 W, \zeta_0 R_2] = 0, \\
            II_1 + V_2  &=
            {\rm tr}[\chi R_1 W R_2, R_2] \\
            &= {\rm tr}\left( [\chi R_1 W R_2, \zeta_0 R_2] - [\chi R_1 W R_2 [H_2, \zeta_0] R_2, R_2]\right) = 0,
            \\
            III_1 + I_2 & =
            {\rm tr}[\chi R_1 W R_2, R_1] \\
            & = {\rm tr}\left( [\chi R_1 W R_2, \zeta_0 R_1] - [\chi R_1 W R_2 [H_2, \zeta_0] R_2, R_1]\right) = 0,
            \\
            IV_1 + II_2 &=
            {\rm tr}[\chi R_2^2, R_1 W \chi] \\
        &= {\rm tr}\left( [\chi R_2 \zeta_0 R_2, R_1 W \chi] - [\zeta_0 R_2 [H_2,\zeta_1] \chi R_2^2, R_1 W \chi]\right) = 0,
            \\
            V_1 + III_2 &=
            {\rm tr}[\chi R_2 R_1, R_1 W \chi] \\ &= {\rm tr}\left( [\chi R_2 \zeta_0 R_1, R_1 W \chi] - [\zeta_0 R_2 [H_2,\zeta_1] \chi R_2 R_1, R_1 W \chi]\right) = 0,
            \\
            VI_1 + IV_2 &=
            {\rm tr} [\chi R_2 R_1 W, R_2] = {\rm tr} [\chi R_2 R_1 W, \zeta_0 R_2] = 0.
       \end{align}
Therefore,
    \begin{equation}
    \eqref{cube1} +\eqref{cube2} = 0.
    \ee
 To get \eqref{g52} it remains to see that $\eqref{cube3}=0$, but this is immediate because by assumption $[H(A+a)-H(A),\chi]=0$, which readily implies that
 \begin{align}
  [\W,\chi] & =-[H(A+a)-H(A+ a_{\le r}),\chi]  \\
  &= - [2a_{\ge r}\cdot(i\nabla +A) , \chi]  = 0,
    \end{align}
since supports of $\chi'$ and $a_{\ge r}$ are disjoint.

 Finally, we prove \eqref{decaystrip}. Due to the
Helffer-Sj\"ostrand formula \eqref{HS3}, we have to control
$$
\|(R_1^3-R_2^3)[H(A),\chi]\|_1,
$$
 where we use the notations $R(z):=(H(A)-z)^{-1}$ and $R_r
:= (H(A+a_{\le r})-z)^{-1}$. The resolvent identity yields
    \beq
R^3 - R_r^3 = R^3 \W_r R_r + R^2\W_r R_r^2 +  R\W_r
R_r^3,
    \label{idresolv3}
    \eeq
where
$$
\W_r := H(A+a_{\le r})-H(A+{a})
 = -\left(2a_{\ge r} \cdot (i\nabla + A) + i\div (a_{\ge r}) + |{a}|^2 - |a_{\le r}|^2\right).
$$
Note that $\W_r\equiv 0$ whenever $|y|\le r-1$, so that we write
\begin{align}
\W_r& =\sum_{(x_1,y_1)\in\Z^2\cap[-x_0-1,x_0+1]\times [-r+2,r-2]^c} \1_{(x_1,y_1)} \W_r \\
& = \sum_{(x_1,y_1)\in\Z^2\cap[-x_0-1,x_0+1]\times [-r+2,r-2]^c}  \W_r \1_{(x_1,y_1)},
\label{decompW}
\end{align}
where $\1_{(x,y)}$ stands for a smooth characteristic function of
the cube of side length one and centered at $(x,y) \in \rd$ such that
$\sum_{(x,y)\in\Z^2}\1_{(x,y)} = 1$. Similarly,
    \beq
[H(A),\chi]=\sum_{x_2\in\Z} [H(A),\chi] \1_{(x_2,0)}.
\label{decompcom}
    \eeq
    For the moment fix $x_1, y_1, x_2 \in {\mathbb Z}$, and
    introduce the short-hand notations $\zeta_0 : =
    \1_{(x_1,y_1)}$, and $\zeta : =
    \1_{(x_2,0)}$. Let $\zeta_j$ be non-negative
    smooth compactly supported functions such that $\zeta_j
    \zeta_{j-1} = \zeta_{j-1}$ on $\rd$, $j=1,2,3$. Then we have
    \bel{g55}
\Vert R^3\1_{(x_1,y_1)}\W_r R_r [H(A),\chi]\1_{x_2}\Vert_1
     \leq
      \Vert R^3
\zeta_0 \W_r
    \Vert_1 \|\zeta_1 R_r[H(A),\chi]\zeta\Vert .
    \ee
Next,
   \begin{align}
&\Vert R^2 \1_{(x_1,y_1)}\W_r R_r^2 [H(A),\chi]\1_{x_2}\Vert_1  \\
& \leq
    \Vert R^2 \zeta_0 \W_r
     R_r \Vert_1 \left(\Vert \zeta_1 R_r[H(A),\chi]\zeta\Vert
 + \Vert \zeta_2 [H_r(A),\zeta_1]
     R_r^2[H(A),\chi]\zeta\Vert\right), \label{g56}
    \end{align}
and \begin{align}
& \Vert R \1_{(x_1,y_1)}\W_r R_r^3 [H(A),\chi]\1_{x_2}\Vert_1\\
 &   \leq
     \Vert R \zeta_0 \W_r
     R_r^2 \Vert_1 \left(\Vert \zeta_1 R_r[H(A),\chi]\zeta\Vert + \Vert \zeta_2 [H(A),\zeta_1]
     R_r^2[H(A),\chi]\zeta\Vert\right)\\
     & \; +
     \Vert R \zeta_0 \W_r
     R_r [H(A),\zeta_1] R_r \Vert_1 \left(\Vert \zeta_2
     R_r^2[H_r(A),\chi]\zeta\Vert + \Vert \zeta_3 [H_r(A),\zeta_2]
     R_r^3 [H(A),\chi]\zeta\Vert\right). \label{g57}
    \end{align}
    Assume now that $z$ is in a compact subset of ${\mathbb C}$,
    and $\Im{z} \neq 0$. Applying  Proposition
    \ref{gp1} and estimate \eqref{g41} below, we find that there exists a constant $c_1$ independent of
    $x_1, y_1, x_2 \in {\mathbb Z}$, and $z$, such that the
    trace-class norms $$
    \Vert R^3
\zeta_0 \W_r
    \Vert_1, \quad \Vert R^2
\zeta_0 \W_r
     R_r \Vert_1, \quad \Vert R \zeta_0 \W_r
     R_r [H(A),\zeta_1] R_r \Vert_1,
     $$
     appearing on the r.h.s. of
     \eqref{g55} -- \eqref{g57} are upper bounded by $c_1
     |\Im{z}|^{-3}$. On the other hand, making use of estimates of
     Combes-Thomas type (see \cite{CT, GK}), we find that there exists a constant $c_2
     > 0$ independent of $x_1, y_1, x_2 \in {\mathbb Z}$, and $z$ such that
     the operator norms $$
     \Vert \zeta_1
     R_r[H(A),\chi]\zeta\Vert, \quad \Vert \zeta_2 [H_r(A),\zeta_1]
     R_r^2[H(A),\chi]\zeta\Vert,
     $$
     $$
      \Vert \zeta_2
     R_r^2[H(A),\chi]\zeta\Vert,  \quad
     \Vert \zeta_3 [H_r(A),\zeta_2]
     R_r^3 [H(A),\chi]\zeta\Vert,
     $$
     appearing on the r.h.s. of
     \eqref{g55} -- \eqref{g57} are upper bounded by $$c_2
     |\Im{z}|^{-1} \exp{(-c_2 |\Im{z}|(|x_1 - x_2| + |y_1|))}.$$ Taking into account
     these estimates, bearing into mind the representations
     \eqref{idresolv3}, \eqref{decompcom}, and \eqref{decompW}, and arguing as in the
     proof of \cite[Lemma 2]{CG}, we easily obtain \eqref{decaystrip}.
    \end{proof}

\begin{proof}[Proof of Corollary~\ref{corstrip}]
We introduce a modified strip confining potential
$\tilde{\cA}_{\mathrm{strip}}$ generating a magnetic field
$\tilde{B} \in {\mathcal C}^2(\rd;\re)$ which satisfies $\tilde{B}(x,y)\ge B_0$
for all $(x,y)\in\R^2$ and $\tilde{B}(x)=B(x)$ on $\{|x|\ge
R_0\}$. Since the operator $H(\tilde{\cA}_{\mathrm{strip}}) -
\tilde{B}$ is non-negative (see
e.g. \cite{E}), we have
$\inf\,\sigma(H(\tilde{\cA}_{\mathrm{strip}})) \geq B_0$  . As a consequence,
$\sigma_e^{(I)}(H(\tilde{\cA}_{\mathrm{strip}}))=0$. Since the
magnetic field $B-\tilde{B}$ is supported on a strip,
Proposition~\ref{p12} implies the existence a magnetic potential $A \in
{\mathcal C}^2(\rd; \rd)$ which generates the magnetic field $B-\tilde{B}$,
and is supported on the strip $\{|x|\le
R_0\}$. Applying Theorem~\ref{thm1}, we find
that
$$
\sigma_e^{(I)}(H(\tilde{\cA}_{\mathrm{strip}}+A)) =
\sigma_e^{(I)}(H(\tilde{\cA}_{\mathrm{strip}})) = 0.
$$
Since the potentials $\tilde{\cA}_{\mathrm{strip}}+A$ and
${\cA}_{\mathrm{strip}}$ generate the same magnetic field $B$, the
operators $H(\tilde{\cA}_{\mathrm{strip}}+A)$ and
$H(\cA_{\mathrm{strip}})$ are unitarily equivalent under an
appropriate gauge transform. Therefore,
$$
\sigma_e^{(I)}(H(\cA_{\mathrm{strip}})) =
\sigma_e^{(I)}(H(\tilde{\cA}_{\mathrm{strip}}+A)) = 0.
$$
Finally, applying Theorem~\ref{thm1} once more, we find that
$$
\sigma_e^{(I)}(H(\cA_{\mathrm{strip}}+a)) =
\sigma_e^{(I)}(H(\cA_{\mathrm{strip}})) = 0.
$$
\end{proof}

%%%%%%%%%%%%%%%%%%%%%%%%%%%%%%%%%%%%%%%%%%%%%%%
%%%%%%%%%%%%%%%%%%%%%%%%%%%%%%%%%%%%%%%%%%%%%%%

\section{Sum rule for magnetic perturbations}
\label{sectSumrule}
The aim of this section is to prove Theorem~\ref{thm2} and Corollary~\ref{corcurrent}.

\begin{proof}[Proof of Theorem~\ref{thm2}]
It is enough to prove the first part of the statement (the one concerning $\K(\al,\be)$), for  the second part  will follow from the relation
\beq
\K'(\al,\be)= \K(\al,\be)-\K(\al,\cA_{\mathrm{Iw}}^{(L)})-\K(\cA_{\mathrm{Iw}}^{(R)},\be)+\K(\cA_{\mathrm{Iw}}^{(L)},\cA_{\mathrm{Iw}}^{(R)}),
\eeq
where we used that $\tr g'(H(\cA_{\mathrm{Iw}}^{(L)},\cA_{\mathrm{Iw}}^{(R)})) [H(\cA_{\mathrm{Iw}}^{(L)},\cA_{\mathrm{Iw}}^{(R)}),\chi]=0$ by Corollary~\ref{corstrip}.

We set $\K:=\K({\al},{\be})$ and $\K_r:=\K({\al},{\be}_r)$, where ${\be}_r:= \be \varphi_r$ and $\varphi_r$ is a smooth characteristic function of the region $x\le r$. By Theorem~\ref{thm1}, we have $\K_r\in\T_1$ and $\tr \K_r =0$. It is thus enough to prove polynomial decay in $r$ of $\|\K-\K_r\|_1$.
We set
\beq
\Q^{\chi}_{(a,b)}:=i[H(a,b),\chi].
\eeq
Note that although $H(a,b)$ is non linear in $a,b$, the commutator$\Q^{\chi}_{(a,b)}$ is linear, that is for arbitrary $a,b,a',b'$  in $\mathcal{C}^1$,
\bel{Qlin}
 \Q^{\chi}_{(a,b)}-\Q^{\chi}_{(a',b')}=\Q^{\chi}_{(a-a',b-b')}.
 \ee
We get
\begin{align}
&\K-\K_r
=
g'(H(\al,\be))\Q_{(\al,\be)}^{\chi} -g'(H(\al,{\be}_r))\Q_{(\al,{\be}_r)}^{\chi}\\
& \quad - g'(H(0,\be))\Q_{(0,{\be})}^{\chi} +g'(H(0,{\be}_r))\Q_{(0,{\be}_r)}^{\chi}
\\
&=
(g'(H(\al,\be))-g'(H(\al,{\be}_r)))\Q_{(\al,{\be}_r)}^{\chi}  \\
& \quad -(g'(H(0,\be))-g'(H(0,{\be}_r)))\Q_{(0,{\be}_r)}^{\chi} \\
&\quad +( g'(H(\al,\be)) - g'(H(0,\be)))\Q_{(0,\be-{\be}_r)}^{\chi}\\
&=
(g'(H(\al,\be))-g'(H(\al,{\be}_r))-g'(H(0,\be))+g'(H(0,{\be}_r)))\Q_{(\al,{\be}_r)}^{\chi}  \label{4terms}\\
& \quad -(g'(H(0,\be))-g'(H(0,{\be}_r)))\Q_{(\al,0)}^{\chi} \label{Q1}\\
&\quad +( g'(H(\al,\be)) - g'(H(0,\be) ))\Q_{(0,\be-{\be}_r)}^{\chi},   \label{Q2}
\end{align}
where we used \eqref{Qlin}. The term in \eqref{4terms} is evaluated as in \cite{CG}, while \eqref{Q1} and \eqref{Q2} are new terms coming from the magnetic nature of the perturbation.

We first show that \eqref{Q1} and \eqref{Q2} satisfy a bound of
the type \eqref{decaystrip}. Notice, from the very definition of
commutators $\Q^{\chi}_{(a,b)}$, that if $a,b$ given are supported
on  the closed region  $\Gamma$, then so is the operator
$\Q^{\chi}_{(a,b)}$, in the sense that
$(1-\chi_\Gamma)\Q^{\chi}_{(a,b)}=(\Q^{\chi}_{(a,b)})_{|_{\R^2\setminus
\Gamma}} \equiv 0$,  $\chi_\Gamma$ being the characteristic
function of $\Gamma$.

Let us first consider \eqref{Q1}. We use the Helffer-Sj\"ostrand formula \eqref{HS3} and then proceed as in the proof of Proposition~\ref{propstrip}, equations \eqref{idresolv3} and below, but  this time with
\begin{align}
\W_r
&:= H(0,{\be}_r)-H(0,\be)\\
&=
\sum_{(x_1,y_1)\in\Z^2\cap\{x_1\ge r-1\}} \1_{(x_1,y_1)} \W_r \\
& = \sum_{(x_1,y_1)\in\Z^2\cap\{x_1\ge r-1\}} \W_r \1_{(x_1,y_1)},
\end{align} and
\beq
\Q_{(\al,0)}^{\chi}=\sum_{x_2\in\Z\cap\{x_2\le 1\}}  \Q_{(\al,0)}^{\chi} \1_{(x_2,0)},
\eeq
instead of \eqref{decompW} and \eqref{decompcom}. The proof of \eqref{Q2} is similar but now we consider $\W:=H(\al,\be) - H(0,\be)$ which we decompose over boxes centered at points $(x_1,y_1)\in \Z^2\cap\{x_1\le 1\}$, while we decompose $\Q_{(0,\be-{\be}_r)}^{\chi}$ over boxes centered at points $(x_2,y_2)\in \Z^2\cap\{x_2\ge r-1, y_2=0\}$.

We turn now to  \eqref{4terms}. Again, in order to estimate
\begin{equation}
\left\|\left\{g'(H(\al,\be))-g'(H(\al,{\be}_r))-
g'(H(0,\be))+g'(H(0,{\be}_r))\right\}\Q_{(\al,{\be}_r)}^{\chi} \right\|_1,
\end{equation}
we bound, with obvious notations for the resolvents, the norm
\begin{equation}
\left\|\left\{\left(R^3_{(\al,\be)}- R^3_{(\al,{\be}_r)}\right) - \left( R^3_{(0,\be)}-R^3_{(0,{\be}_r)}\right)\right\}\Q_{(\al,{\be}_r)}^\chi\right\|_1.
\end{equation}
To do so, we utilize  the resolvent identity \eqref{idresolv3}
together with
\begin{equation}
 H(\al,\be)-H(\al,{\be}_r)=H(0,\be)-H(0,{\be}_r)=:-\W_r,
\end{equation}
and get
\begin{align}
&\left(R^3_{(\al,\be)}- R^3_{(\al,{\be}_r)}\right) - \left( R^3_{(0,\be)}-R^3_{(0,{\be}_r)}\right)\\
& =
\left(R^3_{(\al,\be)} \W_r R_{(\al,{\be}_r)} + R^2_{(\al,\be)}\W_r R_{(\al,{\be}_r)}^2 +  R_{(\al,\be)}\W_r R_{(\al,{\be}_r)}^3\right)\\
& \quad - \left( R_{(0,\be)}^3 \W_r R_{(0,{\be}_r)} + R_{(0,\be)}^2\W_r R_{(0,{\be}_r)}^2 +  R_{(0,\be)}\W_r R_{(0,{\be}_r)}^3 \right)\\
& =
\left(R^3_{(\al,\be)} \W_r R_{(\al,{\be}_r)} -R_{(0,\be)}^3 \W_r R_{(0,{\be}_r)}\right) \label{diff1}\\
&\quad + \left(  R^2_{(\al,\be)}\W_r R_{(\al,{\be}_r)}^2-  R_{(0,\be)}^2\W_r R_{(0,{\be}_r)}^2 \right) \label{diff2}\\
& \quad + \left( R_{(\al,\be)}\W_r R_{(\al,{\be}_r)}^3-  R_{(0,\be)}\W_r R_{(0,{\be}_r)}^3 \right). \label{diff3}
\end{align}
Next, with $\W_{\be}=H(0,\be)-H(\al,\be)$ and $\W_{{\be}_r}= H(0,{\be}_r)-H(\al,{\be}_r)$, we rewrite \eqref{diff1} using the resolvent identity:
\begin{align}
& R^3_{(\al,\be)} \W_r R_{(\al,{\be}_r)} -R_{(0,\be)}^3 \W_r R_{(0,{\be}_r)} \\
&=R^3_{(\al,\be)} \W_r R_{(\al,{\be}_r)}\W_{{\be}_r}  R_{(0,{\be}_r)}) +(R^3_{(\al,\be)}-R_{(0,\be)}^3)  \W_r R_{(0,{\be}_r)}\\
&=R^3_{(\al,\be)} \W_r R_{(\al,{\be}_r)}\W_{{\be}_r}  R_{(0,{\be}_r)} +R^3_{(\al,\be)}\W_{\be} R_{(0,\be)} \W_r R_{(0,{\be}_r)}\\
&+R^2_{(\al,\be)}\W_{\be} R^2_{(0,\be)} \W_r R_{(0,{\be}_r)}+R_{(\al,\be)}\W_{\be} R^3_{(0,\be)} \W_r R_{(0,{\be}_r)}.
\end{align}
As previously, $\W_r$ is decomposed over boxes centered at
$(x_2,y_2)\in\Z^2$ such that $x_2\ge r-1$, while both $\W_{\be}$
and $\W_{{\be}_r}$ are decomposed over integers $(x_3,y_3)$'s such
that $x_3\le 1$. Proceeding as above yields the result. We then
apply the same argument as for \eqref{diff2} and \eqref{diff3}.
\end{proof}

Corollary~\ref{corcurrent} is a direct consequence of Theorem~\ref{thm2} and Lemma~\ref{lemgap} below, for it is enough to prove that $I\cap \sigma(H(a,0))=\emptyset$, which readily implies that $\sigma_e^{(I)}(H(\cA_{\mathrm{Iw}},\be))=0$.

\begin{lemma}\label{lemgap}
Let $H(A^{(0)})$ be the Landau Hamiltonian with constant magnetic filed $B_-$. Let $a\in \mathcal{C}^1(\R^2)$ be such that $\|a\|_\infty\le K_1 \sqrt{B_-}$ and $\|\mathrm{div}a\|_\infty \le K_2 B_-$. Then there exists a constant $0<K_0<\infty$ such that we have
$$
I\cap \sigma(H(A_0+a))=\emptyset
$$
for $B_-$ large enough and for any interval $I$ satisfying
\beq
I\subset ](2n-1)B_- + K_0 d_n(a,B_-),(2n+1)B_- -  K_0d_n(a,B_-)[,
\eeq
if $n\in\N^\ast$, or
\beq
I\subset ]-\infty,B_- -  K_0 d_0(a,B_-)[
\eeq
if $n=0$, where $d_n(a,B_-)=\max(\|\mathrm{div}a\|_\infty,\|a\|_\infty\sqrt{(n+1)B_-})$, $n \in {\mathbb N}$.
\end{lemma}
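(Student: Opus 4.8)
The plan is to estimate $H(A^{(0)}+a)$ below and above by comparing with the Landau Hamiltonian $H(A^{(0)}) = H_{B_-}$ on quadratic forms. First I would expand, as in \eqref{expand}, the quadratic form
\[
\langle (i\nabla + A^{(0)} + a)u, (i\nabla + A^{(0)} + a)u\rangle = \langle H(A^{(0)})u, u\rangle + 2\,\mathrm{Re}\langle a\cdot(i\nabla + A^{(0)})u, u\rangle + \langle (i\,\mathrm{div}\,a + |a|^2)u, u\rangle
\]
for $u\in \mathcal{C}_0^\infty(\rd)$. The perturbative terms split into a zeroth-order part, controlled in absolute value by $(\|\mathrm{div}\,a\|_\infty + \|a\|_\infty^2)\|u\|^2 \le (K_2 B_- + K_1^2 B_-)\|u\|^2$, and a first-order part $2\,\mathrm{Re}\langle a\cdot(i\nabla + A^{(0)})u,u\rangle$, which by Cauchy--Schwarz and the elementary bound $2\|a\|_\infty\|(i\nabla+A^{(0)})u\|\,\|u\| \le \varepsilon \|(i\nabla+A^{(0)})u\|^2 + \varepsilon^{-1}\|a\|_\infty^2\|u\|^2 = \varepsilon\langle H(A^{(0)})u,u\rangle + \varepsilon^{-1}\|a\|_\infty^2\|u\|^2$ is relatively form-bounded with respect to $H(A^{(0)})$ with arbitrarily small relative bound $\varepsilon$.

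Next I would combine these estimates to get a two-sided form inequality
\[
(1-\varepsilon)H(A^{(0)}) - C_\varepsilon(\|\mathrm{div}\,a\|_\infty + \|a\|_\infty^2) \;\le\; H(A^{(0)}+a) \;\le\; (1+\varepsilon)H(A^{(0)}) + C_\varepsilon(\|\mathrm{div}\,a\|_\infty + \|a\|_\infty^2),
\]
with $\varepsilon$ chosen, say, proportional to $\|a\|_\infty/\sqrt{(n+1)B_-}$ so that on the relevant spectral subspace (energies of order $nB_-$) the term $\varepsilon H(A^{(0)})$ is of size $\|a\|_\infty\sqrt{(n+1)B_-}$, i.e. of order $d_n(a,B_-)$. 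Applying the spectral theorem/min-max to this inequality, the spectrum of $H(A^{(0)}+a)$ is contained in a $O(d_n(a,B_-))$-neighbourhood (with an absolute constant $K_0$ depending only on $K_1,K_2$) of the spectrum of $H(A^{(0)}) = H_{B_-}$, which is $\{(2m-1)B_- : m\in\N^\ast\}$. Hence if $I$ sits inside the Landau gap $\G_n(B_-) = ](2n-1)B_-,(2n+1)B_-[$ at distance more than $K_0 d_n(a,B_-)$ from its endpoints (or, for $n=0$, inside $]-\infty, B_- - K_0 d_0(a,B_-)[$), then $I\cap\sigma(H(A^{(0)}+a)) = \emptyset$. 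One must be mildly careful that the multiplicative distortion $\varepsilon H(A^{(0)})$, evaluated near the $n$-th Landau level, genuinely contributes only $O(d_n)$ and not something worse; this is exactly why $d_n$ carries the factor $\sqrt{(n+1)B_-}$ rather than being a fixed multiple of $\|a\|_\infty$, and it is the one bookkeeping point that needs attention.

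The main obstacle is not conceptual but quantitative: tracking the constants so that a single $K_0$, depending only on $K_1$ and $K_2$ (and not on $n$ or $B_-$), works uniformly, and making sure the splitting into an additive error $O(\|\mathrm{div}\,a\|_\infty + \|a\|_\infty^2)$ and a multiplicative error $O(\|a\|_\infty \sqrt{(n+1)B_-})$ telescopes into the single quantity $d_n(a,B_-) = \max(\|\mathrm{div}\,a\|_\infty, \|a\|_\infty\sqrt{(n+1)B_-})$ under the hypotheses $\|a\|_\infty\le K_1\sqrt{B_-}$, $\|\mathrm{div}\,a\|_\infty\le K_2 B_-$ (note $\|a\|_\infty^2 \le K_1\sqrt{B_-}\,\|a\|_\infty \le K_1 \|a\|_\infty\sqrt{(n+1)B_-}$, so the quadratic term is absorbed). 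Once the form bound is set up with the optimal choice of $\varepsilon$, the conclusion follows directly from the min-max principle, so the estimate above is all that is really needed.
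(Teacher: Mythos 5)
Your reduction of the error terms to $d_n(a,B_-)$ is the right bookkeeping, but the central step of your argument does not close. A two-sided \emph{form} inequality with a multiplicative distortion,
\[
(1-\varepsilon)H(A^{(0)}) - C_\varepsilon \;\le\; H(A^{(0)}+a)\;\le\; (1+\varepsilon)H(A^{(0)}) + C_\varepsilon ,
\]
controls only the min--max values $\mu_k$, i.e.\ eigenvalues counted from the bottom up to the infimum of the essential spectrum. For the Landau Hamiltonian every $\mu_k$ equals $B_-$ (the lowest Landau level is infinitely degenerate), so this comparison yields only $\inf\sigma(H(A^{(0)}+a))\ge(1-\varepsilon)B_- - C_\varepsilon$, which handles the case $n=0$ but says nothing about the gaps $](2n-1)B_-,(2n+1)B_-[$ for $n\ge 1$, which sit \emph{inside} the essential spectrum. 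The inference ``the spectrum of $H(A^{(0)}+a)$ is contained in an $O(d_n)$-neighbourhood of $\sigma(H(A^{(0)}))$'' would follow from an operator-norm bound $\|H(A^{(0)}+a)-H(A^{(0)})\|\le\delta$, but the perturbation $\W_a = 2a\cdot(i\nabla+A^{(0)})+i\,\mathrm{div}\,a+|a|^2$ is an unbounded first-order operator and is only \emph{relatively} bounded; sandwich inequalities of the above type simply do not propagate interior spectral gaps.

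The paper closes this gap by a resolvent (Neumann series) argument rather than a form comparison: writing $R_0(E)=R_a(E)(\mathrm{id}-\W_a R_0(E))$, it suffices to show $\|\W_a R_0(E)\|<1$ for $E$ at distance $d$ from the Landau levels. The zeroth-order part contributes $(\|\mathrm{div}\,a\|_\infty+\|a\|_\infty^2)/d$, and the first-order part is controlled by
\[
\|(i\nabla+A^{(0)})R_0(E)\|^2=\|R_0(E)H(A^{(0)})R_0(E)\|\le \frac{1}{d}+\frac{\max(|E|,0)}{d^2}\le\frac{2\max(|E|,B_-)}{d^2},
\]
which is exactly where the factor $\sqrt{(n+1)B_-}$ in $d_n(a,B_-)$ originates --- so your heuristic for the form of $d_n$ was correct, but the mechanism converting it into a statement about gaps must be the resolvent estimate, not min--max. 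If you replace your concluding step by this Neumann-series criterion (keeping your estimates on $\mathrm{div}\,a$ and $|a|^2$, which are fine), the proof is complete.
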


\begin{proof}
We denote by $R_a$ (resp. $R_0$), the resolvent of $H(A_0+a)$ (resp. $H(A_0)$), and set $\W_a=H(A_0+a)-H(A_0)$,
 We start with by resolvent identity
\begin{equation}
 R_0(E)=R_a(E)( \mathrm{id}-\W_a R_0(E))
\end{equation}
with $E\in I\subset\R\setminus\sigma(H(A_0))$.
To show that $E$ belongs to the resolvent set of $H(A^{(0)}+a)$, it is enough to show that $\|\W_a R_0(E)\|<1$ so that $\mathrm{id}-\W_a R_0(E)$ is invertible. We have
\begin{align}
\Vert\W_a R_0(E)\Vert
& \leq\Vert (\mathrm{div} a+\vert a\vert^2)R_0(E)\Vert+\Vert a \cdot (i\nabla + A^{(0)}) R_0(E)\Vert \\
& \le
(\|\mathrm{div} a\|_\infty+\|a\|^2_\infty) \|R_0(E)\| + \| a\|_\infty \|(i\nabla + A^{(0)}) R_0(E)\|.
\end{align}
We set $d=\mathrm{dist}(E,\sigma(H(0,0)))$. We have, noting that $d\le \max(|E|,B_-)$,
\begin{equation}
\|(i\nabla + A^{(0)}) R_0(E)\|^2 = \|R_0(E) H(A_0) R_0(E)\|
\le  \frac1d + \frac {\max(|E|,0)}{d^2} \le \frac{2\max(|E|,B_-)}{d^2},
\end{equation}
so that
\begin{align}
\Vert\W_a  R_{0}(E)\Vert
\le (\|\mathrm{div} a\|_\infty+\|a\|^2_\infty) \frac1d + \| a\|_\infty \frac{\sqrt{2\max(|E|,B_-)}}{d}.
\end{align}
Assuming that $E$ belongs to the $n$th band,  $n\in\N$,  and $d$
satisfies both  $d> 8 \|a\|_\infty\sqrt{(n+1)B_-}$ and $d>
\frac12(\|\mathrm{div} a\|_\infty+\|a\|^2_\infty)$, we find that
$\Vert\W_a R_{0}(E)\Vert<1$.
\end{proof}

As a final remark, we take advantage of the second sum rule \eqref{sumrule2} to sketch an alternative proof of the existence of quantized currents in magnetic guides created by magnetic barriers of opposite signs.

We first specify a given reference decreasing switch  function $\theta$ such that $\theta(x)=1$ (resp. $\theta(x)=-1$),  for $x< q_-$ (resp. $x>q_+$), with some real numbers $q_-\le q_+$. Set ${\mathcal B}(x)=B\theta(\sqrt{B}x)$ and denote by $\tilde{\beta}(x)=B\int_0^x \theta(\sqrt{B}s)ds$ the corresponding generalized Iwatsuka potential (recall \eqref{beta}). Note that ${\mathcal B}(x)=B$ (resp. ${\mathcal B}(x)= -B$), when $x<B^{-1/2}q_-$ (resp. $x>B^{-1/2}q_+$).

If $q_-=q_+=0$ then we get a sharp interface and the magnetic potential is just $(0,-B|x|)$.

\begin{lemma}\label{lemlowlevel}
Let $\cA_{GIw}^{(B)}=(0,-\tilde{\beta})$ be the $(B, -B)$-generalized Iwatsuka potential described above. Then there exists a constant $c_0>0$ such that for all $B>0$ we have
\beq
\inf \sigma( H(A_{GIw}^{(B)})) \ge c_0 B.
\eeq
\end{lemma}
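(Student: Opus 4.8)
The plan is to reduce the statement to a lower bound on the eigenvalues $E_j(k)$ of the fiber operators $h(k) = -\frac{d^2}{dx^2} + (k - \tilde\beta(x))^2$, uniformly in $k$ and $j \geq 1$, and in fact it suffices to bound $E_1(k)$ from below by $c_0 B$ since the eigenvalues are ordered. First I would rescale: setting $x = B^{-1/2}\xi$ conjugates $h(k)$ (after also rescaling $k = \sqrt{B}\,\kappa$) to $B$ times the operator $-\frac{d^2}{d\xi^2} + (\kappa - \Theta(\xi))^2$ on $L^2(\re)$, where $\Theta(\xi) = \int_0^\xi \theta(s)\,ds$ is a fixed function (independent of $B$) with $\Theta(\xi) \sim \xi$ as $\xi \to +\infty$ and $\Theta(\xi) \sim -\xi$ as $\xi \to -\infty$, in particular $\Theta(\xi) \to +\infty$ as $|\xi| \to \infty$. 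So the claim becomes: $\inf_{\kappa \in \re} \inf \sigma\!\left(-\frac{d^2}{d\xi^2} + (\kappa - \Theta(\xi))^2\right) \geq c_0 > 0$, with $c_0$ independent of everything.

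The key point is that the effective potential $V_\kappa(\xi) := (\kappa - \Theta(\xi))^2$ is nonnegative, and the only way it can be small is near the set where $\Theta(\xi) = \kappa$, which (since $\Theta$ is monotone where $\theta \neq \pm 1$ and asymptotically linear) is a bounded set of $\xi$'s of uniformly bounded length — indeed $\Theta^{-1}(\kappa)$ is an interval contained in $[q_-, q_+]$ when $|\kappa| \leq \min(|\Theta(q_-)|,|\Theta(q_+)|)$ suitably, and for large $|\kappa|$ it is a single point with $V_\kappa$ growing linearly away from it. The standard way to get a positive lower bound is then: suppose for contradiction that the infimum is $0$; take a Weyl sequence or a normalized $\psi_n$ with $\|\psi_n'\|^2 + \int V_\kappa |\psi_n|^2 \to \inf$. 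Since $\|\psi_n'\|^2 \to 0$ would force, via $\int V_\kappa|\psi_n|^2 \to 0$ and the fact that $V_\kappa$ is bounded below by a positive constant outside a fixed-length interval, that $\|\psi_n\|_{L^\infty}$ on that interval is controlled while the mass escapes — one uses that a function with small $H^1$-seminorm cannot concentrate its $L^2$ mass on a bounded interval without having comparable mass just outside it. More cleanly: there is a uniform lower bound of the Agmon/IMS type. Partition $\re$ into the region $\{|\kappa - \Theta(\xi)| \geq \epsilon\}$ (where $V_\kappa \geq \epsilon^2$) and its complement $U_{\kappa,\epsilon}$, an interval of length $\ell(\epsilon)$ uniformly bounded in $\kappa$ because $\Theta$ has a derivative bounded below by $\min_{\xi}\theta'$... no — $\theta$ is a switch, so on $[q_-,q_+]$, $\theta$ ranges through $[-1,1]$ and $\Theta$ is decreasing there; the length of $U_{\kappa,\epsilon}$ is at most $q_+ - q_- + 2\epsilon$. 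On such a bounded interval the Dirichlet Laplacian has first eigenvalue $\geq \pi^2/\ell(\epsilon)^2$; combining a bracketing argument (Dirichlet–Neumann decoupling) with $V_\kappa \geq \epsilon^2$ on the complement gives $\inf\sigma(h) \geq c\,\min(\epsilon^2, \ell(\epsilon)^{-2})$, optimized over $\epsilon$ to a fixed positive constant $c_0$.

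The main obstacle is making the uniformity in $\kappa$ fully rigorous, particularly handling the transition between the regime of moderate $\kappa$ (where $U_{\kappa,\epsilon}$ can genuinely have length of order $q_+ - q_-$) and large $\kappa$ (where the linear growth of $\Theta$ makes $U_{\kappa,\epsilon}$ tiny but one must confirm the lower bound does not degrade). A clean way around this is to note that for $|\kappa|$ large the operator $-\frac{d^2}{d\xi^2} + (\kappa - \Theta(\xi))^2$ is bounded below by, say, $-\frac{d^2}{d\xi^2} + (|\kappa| - |\Theta(\xi)|)^2$ heuristically, and more honestly one argues that since $\Theta(\xi) \geq |\xi| - C_0$ for $|\xi|$ large (with $C_0$ depending only on $\theta$), for $|\kappa|$ beyond a threshold the set $U_{\kappa,\epsilon}$ has length $O(1)$ independent of $\kappa$ and sits near $\Theta^{-1}(\kappa)$. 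So the length $\ell(\epsilon) := \sup_\kappa |\{\xi : |\kappa - \Theta(\xi)| < \epsilon\}|$ is finite, and one simply checks $\ell(\epsilon) \leq (q_+ - q_-) + 2\epsilon$ using monotonicity of $\Theta$ and $|\Theta'| = |\theta| \leq 1$ everywhere together with $|\theta| = 1$ outside $[q_-,q_+]$, so that outside $[q_-,q_+]$ the level set $\{|\kappa - \Theta| < \epsilon\}$ has length $\leq 2\epsilon$, while inside it has length $\leq q_+ - q_-$. Then the bracketing bound gives $c_0 = c\,\min(\epsilon_0^2, ((q_+-q_-)+2\epsilon_0)^{-2})$ for a convenient choice $\epsilon_0 = 1$, say, and this $c_0$ depends only on $\theta$, hence is independent of $B$, completing the proof.
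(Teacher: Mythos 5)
Your first step --- the dilation $x\mapsto B^{-1/2}\xi$, $k\mapsto B^{1/2}\kappa$ reducing everything to $B$ times the fixed operator $-\frac{d^2}{d\xi^2}+(\kappa-\Theta(\xi))^2$ --- is exactly the paper's proof, which stops there: it records $\inf\sigma(H(\cA_{GIw}^{(B)}))=B\inf\sigma(H(\cA_{GIw}^{(1)}))$ and takes $c_0=\inf\sigma(H(\cA_{GIw}^{(1)}))>0$, the positivity being immediate from Proposition~\ref{p1} (the band functions of the $B=1$ operator are positive and continuous, tend to $+\infty$ at one end of the fiber variable and to the harmonic-oscillator eigenvalues $\mu_j>0$ at the other, so $\inf_k E_1(k)>0$). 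You instead try to prove this positivity by a direct, quantitative localization bound, which is a legitimate and in some ways more self-contained route --- but the mechanism you finally commit to is broken. Decoupling by boundary conditions at the endpoints of $U_{\kappa,\epsilon}$ gives a \emph{lower} bound on the operator only if you impose Neumann conditions, and the Neumann Laplacian on an interval has lowest eigenvalue $0$, not $\pi^2/\ell^2$; the Dirichlet value $\pi^2/\ell^2$ is only available for \emph{upper} bracketing. So ``$\inf\sigma(h)\geq c\,\min(\epsilon^2,\ell(\epsilon)^{-2})$ by Dirichlet--Neumann decoupling'' does not follow: bracketing alone yields only $\inf\sigma(h)\geq 0$.

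The correct version is the one you mention in passing and then abandon: if $\|\psi\|=1$ and $\|\psi'\|^2+\int V_\kappa|\psi|^2=\delta$, then $\int_{U^c}|\psi|^2\leq\delta/\epsilon^2$, so one can pick $\xi_0$ in a unit interval adjacent to $U=[a,b]$ with $|\psi(\xi_0)|^2\leq\delta/\epsilon^2$, and then for $\xi\in U$ the fundamental theorem of calculus plus Cauchy--Schwarz gives $|\psi(\xi)|\leq\sqrt{\delta}/\epsilon+\sqrt{(\ell+1)\delta}$; integrating over $U$ and comparing with $\int_U|\psi|^2\geq 1-\delta/\epsilon^2$ forces $\delta\geq\bigl[\epsilon^{-2}+\ell(\epsilon^{-1}+\sqrt{\ell+1})^2\bigr]^{-1}>0$, uniformly in $\kappa$. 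With that substitution (and the cosmetic fixes: $\Theta\to-\infty$, not $+\infty$, at both ends, and $\ell(\epsilon)\leq(q_+-q_-)+4\epsilon$ since there are two outer branches where $|\Theta'|=1$), your argument closes. As written, though, the concluding step is a genuine gap.
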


\begin{proof}
Performing the partial Fourier transform \eqref{fourier}, we end up with the analysis of $h(k)=-d^2/dx^2 + (k-\tilde{\beta}(x))^2$. Using dilations $(U\psi)(x)=\psi(x\sqrt{B})$, we see that $U^{-1} h(k) U =\tilde{h}(k,B)$ with
\begin{align}
\tilde{h}(k,B) &:=  -B \frac{d^2}{dx^2} + (k-\tilde{\beta}(x/\sqrt{B}))^2 \\
&= B \left( -\frac{d^2}{dx^2} + \left(\frac k{\sqrt{B}}-\sqrt{B}\int_0^{x/\sqrt{B}} \theta(s\sqrt{B})ds\right)^2 \right) \\
&= B \left(- \frac{d^2}{dx^2} + \left(\frac k{\sqrt{B}}-\int_0^{x} \theta(s)ds\right)^2 \right)
\\
& = B \tilde{h}(kB^{-1/2},1)
\end{align}
As a consequence, by the virtue of the direct decomposition \eqref{directint}, we get
\begin{align}
\inf \sigma( H(A_{GIw}^{(B)})) &= \inf_k (\inf\sigma(h(k))) = B \inf_k (\inf\sigma(\tilde{h}(k,1))) \\
&= B \inf \sigma( H(A_{GIw}^{(1)})).
\end{align}
\end{proof}

Let $B$ be large enough so that $I\subset]-\infty,c_0B]$, where $c_0$ comes from Lemma~\ref{lemlowlevel}. In particular, $B_-, B_+ < B$. Let $\cA_{\mathrm{Iw}}^{(L)}$ (resp. $\cA_{\mathrm{Iw}}^{(R)}$), be  a $(B,B_+)$ (resp. $(-B_-,-B)$), Iwatsuka potential. Then \eqref{sumrule2} yields
\begin{align}
 \sigma_e^{(I)}(H(A_{GIw}^{(-B_-,B_+)})) =&
\\
 \sigma_e^{(I)}(H(A_{Iw}^{(B,B_+)})) + \sigma_e^{(I)}(H(A_{Iw}^{(-B_-,-B)}))- \sigma_e^{(I)}(H(A_{GIw}^{(-B,B)}))
= & \\- n_+ - n_-,
\end{align}
recovering Corollary~\ref{corsnake}.

%%%%%%%%%%%%%%%%%%%%%%%%%%%%%%%%%%%%%%%%%%%%%%%%
%%%%%%%%%%%%%%%%%%%%%%%%%%%%%%%%%%%%%%%%%%%%%%%%

\section{Regularization and disordered systems}
\label{sectregul}
Quantum Hall effect actually deals with disordered systems, for
its famous plateaux are consequences of the existence of localized
states. As noticed in \cite{CG,EGS}, when adding a random
potential the definition of the edge conductance requires a
regularization to make sense. This regularization encodes the
localization properties of the disordered systems, killing
possible spurious currents.

Following \cite{CG}, a family $\lbrace J_r \rbrace _{r>0}$ will be called a regularization for an Hamiltonian $H$
and an interval $I$ if the following conditions hold true
\begin {align}
&(C1):\Vert J_r \Vert =1 ,\forall r>0 \mbox{ and } \forall \varphi\in E_{H}(I) L^2(\R^2) ,
\mbox{we have} \lim_{r\rightarrow\infty}J_r\varphi=\varphi\\
& (C2): g'(H)i[H,\chi]J_r \in\T_1,\forall r>0, \mbox{ and }
\mbox{there exists} \lim_{r\rightarrow \infty } \tr(g'(H)i[H,\chi]J_r) <  \infty.
\end {align}
For such a regularization we can define the regularized edge conductance by
\begin{equation}
 \sigma_e^{\mathrm{reg}, I}:=-2\pi\lim_{r\rightarrow\infty} \tr( g'(H)i[H,\chi]J_r)
\end{equation}
Consider now a pair ($H(\cA_{\mathrm{Iw}},a)$, $H(0,a)$). As an immediate consequence of Theorem~\ref{thm2},
if $J_R$ regularizes one operator of this pair then it regularizes the second one, and one has
\begin{equation}\label{rulereg}
     \sigma_e^{\mathrm{reg}, I}(H(\cA_{\mathrm{Iw}},a)) = n +  \sigma_e^{\mathrm{reg}, I}(H(0,a)).
\end{equation}
In particular, if we can show that
$\sigma_e^{\mathrm{reg}, I}(H(0,a))=0$,
for instance under some localization property, then the edge quantization for $H(\cA_{\mathrm{Iw}},a)$ satisfies,
for any $I\subset ](2n-1)B_-, (2n+1)B_-[\cap ]-\infty, B_+[$:
\begin{equation}
 \sigma_e^{\mathrm{reg}, I}(H(\cA_{\mathrm{Iw}},a))= - 2\pi\lim_{R\to\infty} \tr( g'(H(\cA_{\mathrm{Iw}},a))i[H(\cA_{\mathrm{Iw}},a),\chi]J_R)= n.
\label{eqregul}
\end{equation}
The value $n$ is of course in agreement with the value of the (bulk) Hall conductance, as argued by Halperin \cite{Ha}. Indeed, by extending \cite{GKS1} of \cite{GKS2} to random magnetic perturbations, the Hall conductance can be defined and  computed for Fermi energies lying in the localized states region, and shown to be equal  to the number of the highest Landau level below the Fermi level.

Several possible regularizations have been introduced in \cite{CG,CGH} within this context,
each of them based on a specific localization property of the interface random Hamiltonian, where the randomness is only located on the half plane where the energy barrier is not effective. These  properties are known to hold in the region of complete localization \cite{GKduke,GKjsp}. It is worth pointing out that this non ergodic random Hamiltonian is the relevant operator within our context where we deal with interface issues.

In particular, in some situations, it is possible to observe edge currents ``without edges", meaning edge currents created by an interface random potential, as shown in \cite{CG,CGH}. Playing with the sum rule it is actually possible to show the quantization of the regularized edge conductance for models considered in \cite{EKSS}, namely two different random electric potentials on the left and right half spaces, provided that the disorder difference is large. It can be extended to a high disorder electric potential and a small disorder magnetic potential. We cannot yet prove such a phenomenon for purely magnetic random  potentials.

Such regularizations are thus designed to study the interface problem, and compute directly the edge conductance. The equality with the bulk conductance is then a by-product of this computation if by other means the bulk conductance could be computed.

If the focuss is rather put on the equality-bulk edge, then a similar regularization to the ones above, but involving the localization properties of the $\Z^2$ ergodic bulk Hamiltonian is needed. Such an analysis is pulled through in \cite{EGS} where  the authors  are able to reconciliate a priori the edge and bulk points of view, showing that their regularized edge conductance and the Hall conductance match. It is very likely that such an analysis can be carried over to the context of the present paper. However it would require to extend the analysis of \cite{EGS} to the continuous setting and to random magnetic potentials.

To illustrate the discussion of this section,  let us consider the random magnetic field
$$
a_{\omega}(x,y)=
 \sum_{j=(j_1,j_2)\in\Z^2, \; j_1\ge 0} \omega_j
{\bf v}_j(x,y),
$$
with ${\bf v}_j(x,y)=(v_1(x-j_1),v_2(y-j_2))$, $v_1,v_2$ being two given $L^\infty$ compactly supported functions, and $\omega_j$ independent and identically
distributed random variables supported on $[-1,1]$, with common density
$\rho_\eta(s)=C_{\eta} \eta^{-1} \exp(-s \eta^{-1})
\chi_{[-1,1]}(s)$, $\eta>0$, where $C_\eta$ is such that $\int \rho_\eta(s) ds =1$. The support of $\rho_\eta$ is $[-1,1]$ for all $\eta>0$, but as $\eta$ goes to zero the disorder becomes weaker, for most $j$ the coupling $\omega_j$ will be almost zero. This model is the half-plane perturbation version  of the model considered in \cite{DGR2}.

We denote by $H_{B_-,\lambda,\omega}$ the corresponding random operator $(-i\nabla - A_0 - \lambda a_{\omega})^2$, where $A_0$  generates a constant magnetic field of strength $B_-$ in the perpendicular direction. By Lemma~\ref{lemgap}, for $\lambda$ small enough, the spectrum of $H_{B_-,\lambda,\omega}$ is
contained in disjoint intervals centered at the Landau levels. Thanks to the ergodicity in the $y$ direction, the spectrum is almost surely deterministic (see e.g. \cite[Theorem~2]{EKSS}, which can be extended to random perturbation of order $1$ considered here). In \cite{DGR2}, we show that for the $\Z^2$-ergodic version of $H_{B_-,\lambda,\omega}$, there exists $\lambda^\ast>0$ and for any $\lambda\in]0,\lambda^{\ast}]$ some $\eta^\ast(\lambda)$ such that for any $\eta\in]0,\eta^\ast(\lambda)]$, the full picture of localization as described in \cite{GKduke,GKjsp} is valid at the edge of the spectrum. The same analysis holds true for the half-plane version of the randomness  as well for the Wegner estimate of \cite{HK} holds the same (the same vector field can be used), and the initial condition is verified the same way uniformly for all boxes of the initial scale. This comes from the fact that within the region where the magnetic perturbation is zero, the localization holds for free at a given distance to the Landau levels.

The remaining issue is to make sure that the spectrum is not empty where localization can be proven. In \cite{DGR2} specific perturbations $a_{\omega}$ are explicitly constructed where for a given integer $J$, the $J^{\rm th}$ first Landau levels of the Landau Hamiltonian $H(B_-)$ split into non trivial intervals as $\lambda$ is turned on. It then follows from \cite[Theorem~2]{EKSS} (extended to magnetic perturbations) that these intervals are also contained in the spectrum of the corresponding $H_{B_-,\lambda,\omega}$. The reason for that result is that if $E$ belongs to the almost sure spectrum of the $\Z^2$-ergodic model, it is approximated, in term of a Weyl sequence, by eigenvalues of a large volume Hamiltonian with a specific configuration of the random variables; then almost every potential, even defined on the half space will exhibit somewhere the same pattern, thus creating the same eigenvalue.

\section{Appendix: trace estimates}
\label{sectAuxiliary}

Let ${\mathcal H}$ be a given separable Hilbert space. Denote by
${\mathcal B}$ the class of bounded linear operators with norm
$\|\cdot\|$, acting in ${\mathcal H}$, and by ${\mathcal T}_p$, $p
\in [1,\infty[$, the Schatten-von Neumann class of compact
operators acting in ${\mathcal H}$. We recall that ${\mathcal
T}_p$ is a Banach space with norm $\|T\|_p : = \left({\rm
tr}\,(T^*T)^{p/2}\right)^{1/p}$. In particular, in coherence with
our previous notations, ${\mathcal T}_1$ is the trace class, and
${\mathcal T}_2$ is the Hilbert-Schmidt class. The following lemma
contains some well-known properties of the Schatten-von Neumann
spaces, used systematically in the proofs of our results.

\begin{lemma} \label{gl1} {\rm \cite{Si}}
(i) Let $T \in {\mathcal T}_p$, $p \in [1,\infty[$. Then $T^* \in {\mathcal T}_p$
and we have
    \bel{g0}
    \|T\|_p = \|T^*\|_p.
    \ee
    (ii) Let $T \in {\mathcal T}_p$, $p \in [1,\infty[$, and $Q \in {\mathcal B}$.
    Then $TQ \in {\mathcal T}_p$, and we have
    \bel{g00}
    \|TQ\|_p \leq \|T\|_p \|Q\|.
    \ee
    (iii) Let $p_j \in [1,\infty[$, $j = 1\ldots, n$, $p \in [1,\infty[$, and $\sum_{j=1}^n p_j^{-1} = p^{-1}$.
    Assume that $T_j \in {\mathcal T}_{p_j}$,
    $j = 1\ldots, n$. Then $T : = T_1 \ldots T_n \in {\mathcal T}_p$, and we have
    \bel{g000}
     \|T\|_p \leq \prod_{j=1}^n \|T\|_{p_j}.
     \ee
     \end{lemma}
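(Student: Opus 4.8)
The plan is to reduce all three assertions to the classical variational description of singular values, exactly as in \cite{Si}. For a compact operator $T$ on $\mathcal H$ let $s_n(T)$, $n\ge1$, denote its singular values, i.e.\ the eigenvalues of $(T^*T)^{1/2}$ listed in non-increasing order and repeated with multiplicity, so that
\[
s_n(T)=\inf\bigl\{\,\|T-F\|:\ F\in\mathcal B,\ \operatorname{rank}F\le n-1\,\bigr\}
\qquad\text{and}\qquad
\|T\|_p=\Bigl(\,\sum_{n\ge1}s_n(T)^p\Bigr)^{1/p}.
\]
Once this is in place, each of (i)--(iii) becomes an elementary statement about the non-increasing sequences $\bigl(s_n(\cdot)\bigr)$, and no genuinely analytic input beyond the spectral theorem for compact self-adjoint operators is needed.

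First I would prove (i): the non-zero parts of the spectra of $T^*T$ and $TT^*$ coincide with the same multiplicities, since $T^*Tu=\lambda u$ with $\lambda\neq0$ forces $Tu\neq0$ and $TT^*(Tu)=\lambda(Tu)$, and symmetrically; hence $s_n(T^*)=s_n(T)$ for all $n$, which gives $T^*\in\mathcal T_p$ together with \eqref{g0}. For (ii), if $\operatorname{rank}F\le n-1$ then $\operatorname{rank}(FQ)\le n-1$ and $\|TQ-FQ\|\le\|T-F\|\,\|Q\|$, so taking the infimum over such $F$ in the variational formula yields $s_n(TQ)\le\|Q\|\,s_n(T)$ for every $n$; raising to the $p$-th power and summing gives $TQ\in\mathcal T_p$ and \eqref{g00}.

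For (iii) it suffices, by an obvious induction on $n$, to treat a product $T_1T_2$ of two factors with $p_1^{-1}+p_2^{-1}=p^{-1}$. Here I would invoke the Fan--Horn inequalities for the singular values of a product, themselves deduced from the approximation characterization by approximating $T_1$ and $T_2$ separately by finite-rank operators: concretely, $\bigl(s_k(T_1T_2)\bigr)_k$ is log-majorized by $\bigl(s_k(T_1)s_k(T_2)\bigr)_k$, which via the standard majorization lemma applied to the convex increasing function $t\mapsto e^{pt}$ gives $\sum_k s_k(T_1T_2)^p\le\sum_k\bigl(s_k(T_1)s_k(T_2)\bigr)^p$. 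The scalar Hölder inequality with exponents $p_1/p$ and $p_2/p$ (conjugate because $p/p_1+p/p_2=1$), applied to the sequences $\bigl(s_k(T_1)^p\bigr)_k$ and $\bigl(s_k(T_2)^p\bigr)_k$, then bounds the right-hand side by $\|T_1\|_{p_1}^p\,\|T_2\|_{p_2}^p$, which is \eqref{g000}. The one genuinely non-trivial ingredient is precisely this majorization step for the singular values of a product; parts (i), (ii) and the reduction of (iii) are routine manipulations of the min--max formula, and since the whole statement is classical (see \cite{Si}) the write-up can be kept brief.
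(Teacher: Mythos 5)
Your argument is correct and is essentially the standard proof from \cite{Si}: the paper itself gives no proof of Lemma \ref{gl1}, quoting it as classical, and your reduction to singular values via the rank-approximation formula, together with Horn's log-majorization and scalar H\"older for part (iii), is exactly the route taken in that reference. Nothing to add.
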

     \begin{lemma} \label{gl3} {\rm \cite{Si}}
     (i) Let $T \in {\mathcal T}_1$, $Q \in {\mathcal B}$. Then we have
     \bel{g60}
     {\rm tr}\,TQ = {\rm tr}\,QT.
     \ee
     (ii) Let $p \in [1,\infty[$, $q \in [1,\infty[$, $p^{-1} + q^{-1} = 1$. Assume that $T \in {\mathcal T}_p$, $Q \in {\mathcal T}_q$. Then \eqref{g60} holds true again.
     \end{lemma}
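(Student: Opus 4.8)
The plan is to prove the cyclicity relation \eqref{g60} first for finite-rank $T$, where it reduces to a one-line computation, and then to remove the rank restriction by a density argument exploiting the continuity of the trace functionals on the relevant Schatten space; part~(ii) will then be deduced from part~(i) via H\"older's inequality for Schatten norms, which is already recorded as Lemma~\ref{gl1}(iii).

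For part~(i), I would start with a rank-one operator, say $Tx=\langle x,\psi\rangle\phi$ with $\phi,\psi\in{\mathcal H}$. Computing in an arbitrary orthonormal basis, $TQ$ and $QT$ are again rank one and one reads off $\tr(TQ)=\tr(QT)$; by linearity this gives $\tr(TQ)=\tr(QT)$ for every finite-rank $T$ and every $Q\in{\mathcal B}$. For a general $T\in{\mathcal T}_1$ I would approximate it in $\|\cdot\|_1$ by finite-rank operators $T_n$, e.g.\ by truncating the canonical expansion of $T$. The maps $S\mapsto\tr(SQ)$ and $S\mapsto\tr(QS)$ are $\|\cdot\|_1$-continuous: by Lemma~\ref{gl1}(ii), $\|SQ\|_1\le\|S\|_1\|Q\|$ and $\|QS\|_1\le\|S\|_1\|Q\|$, while $|\tr S|\le\|S\|_1$ is the standard trace bound. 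Passing to the limit in $\tr(T_nQ)=\tr(QT_n)$ then yields \eqref{g60}.

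For part~(ii), Lemma~\ref{gl1}(iii) with $p^{-1}+q^{-1}=1$ ensures $TQ,QT\in{\mathcal T}_1$, so both sides of \eqref{g60} are well defined. Since every element of ${\mathcal T}_q$ is bounded, part~(i) applies and gives $\tr(T_nQ)=\tr(QT_n)$ for finite-rank $T_n$. Approximating $T$ in $\|\cdot\|_p$ by finite-rank $T_n$ and invoking Lemma~\ref{gl1}(iii) again, $\|(T-T_n)Q\|_1\le\|T-T_n\|_p\|Q\|_q\to0$ and $\|Q(T-T_n)\|_1\le\|Q\|_q\|T-T_n\|_p\to0$; together with $|\tr S|\le\|S\|_1$ this gives $\tr(T_nQ)\to\tr(TQ)$ and $\tr(QT_n)\to\tr(QT)$, so \eqref{g60} follows in the limit.

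There is no genuine obstacle here: this is a classical fact, see \cite{Si}. The only points requiring attention are ensuring that the operators whose traces we take really belong to ${\mathcal T}_1$ and that the approximating finite-rank operators converge in the right Schatten norm; both are supplied by Lemma~\ref{gl1}, so the same density scheme applies verbatim in the two cases.
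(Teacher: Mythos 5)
Your argument is correct: the rank-one computation, extension by linearity, and passage to the limit using the $\|\cdot\|_1$- (resp.\ $\|\cdot\|_p$-) continuity of $S\mapsto\tr(SQ)$ and $S\mapsto\tr(QS)$ via Lemma~\ref{gl1} is exactly the standard proof. The paper gives no proof of its own here, citing \cite{Si}, and your argument is the one found there, so nothing further is needed.
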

     Our next lemma contains a simple condition which guarantees the inclusion
     $T \in {\mathcal T}_p$ for operators of the form $T = f(x)g(-i\nabla)$.
     \begin{lemma} \label{gl2} {\rm \cite[Theorem 4.1]{Si}}
     Let $d \geq 1$, $p \in [2,\infty[$, $f,
     g \in L^p(\re^d)$. Set $T: = f(x)g(-i\nabla)$. Then we have $T \in {\mathcal T}_p$, and
     \bel{g04}
\|T\|_p \leq (2\pi)^{-d/p}  \|f\|_{L^{p}} \|g\|_{L^{p}}.
    \ee
\end{lemma}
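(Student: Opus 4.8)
The statement is the Kato--Seiler--Simon inequality, which also appears as \cite[Theorem 4.1]{Si}; the plan is to establish it at the two endpoints $p=2$ and $p=\infty$ and then fill in the range $p\in\,]2,\infty[$ by complex (Stein) interpolation. For $p=2$, note that with the unitary normalization of the Fourier transform the multiplier $g(-i\nabla)$ has convolution kernel $(2\pi)^{-d/2}\check g$, where $\check g$ is the inverse Fourier transform of $g$, so $T=f(x)g(-i\nabla)$ is the integral operator with kernel $K_T(x,y)=(2\pi)^{-d/2}f(x)\check g(x-y)$. Then, by Fubini and Plancherel,
$$
\|T\|_2^2=\iint|K_T(x,y)|^2\,dx\,dy=(2\pi)^{-d}\|f\|_{L^2}^2\,\|\check g\|_{L^2}^2=(2\pi)^{-d}\|f\|_{L^2}^2\,\|g\|_{L^2}^2,
$$
which is \eqref{g04} for $p=2$, with equality. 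At the other end, $\|T\|_{\mathcal B}\le\|f\|_{L^\infty}\|g\|_{L^\infty}$, since multiplication by $f$ has operator norm $\|f\|_{L^\infty}$ and $\|g(-i\nabla)\|=\|g\|_{L^\infty}$; this is \eqref{g04} read at $p=\infty$, where $(2\pi)^{-d/p}=1$.

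For the interpolation I would first take $f,g$ bounded and compactly supported, the general case following by density since $\mathcal T_p$ is complete and both sides of \eqref{g04} depend continuously on $f,g$ for the relevant topologies. After normalizing $\|f\|_{L^p}=\|g\|_{L^p}=1$, introduce on the strip $\{0\le\Re z\le\tfrac12\}$ the analytic families $F_z:=|f|^{pz}\,\mathrm{sgn}\,f$ and $G_z:=|g|^{pz}\,\mathrm{sgn}\,g$, where $\mathrm{sgn}\,h:=h/|h|$ on $\{h\ne0\}$ and $0$ elsewhere, and set $T_z:=F_z(x)\,G_z(-i\nabla)$. Then $T_{1/p}=T$ (and $1/p\in\,]0,\tfrac12]$ because $p\ge2$); on the line $\Re z=0$ one has $\|F_z\|_{L^\infty}\le1$, $\|G_z\|_{L^\infty}\le1$, hence $\|T_z\|_{\mathcal B}\le1$; and on the line $\Re z=\tfrac12$ one has $\|F_z\|_{L^2}=\|f\|_{L^p}^{p/2}=1$, $\|G_z\|_{L^2}=1$, so by the $p=2$ bound $\|T_z\|_2\le(2\pi)^{-d/2}$.

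I would then apply Stein's interpolation theorem to $z\mapsto T_z$. The point $z=1/p$ corresponds to the parameter $\theta=2/p$ between $\mathcal T_\infty$ at $\Re z=0$ and $\mathcal T_2$ at $\Re z=\tfrac12$, so the interpolated ideal is $\mathcal T_q$ with $1/q=\theta/2=1/p$, i.e. $q=p$, and
$$
\|T\|_p=\|T_{1/p}\|_p\le\Bigl(\sup_{\Re z=0}\|T_z\|_{\mathcal B}\Bigr)^{1-\theta}\Bigl(\sup_{\Re z=1/2}\|T_z\|_2\Bigr)^{\theta}\le\bigl((2\pi)^{-d/2}\bigr)^{2/p}=(2\pi)^{-d/p};
$$
undoing the normalization gives \eqref{g04}. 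The only delicate point is the bookkeeping required by Stein's theorem: weak analyticity of $z\mapsto T_z$ in the open strip, together with boundary continuity and admissible growth in $|\Im z|$. This is checked on matrix elements $\langle v,T_z u\rangle$ against Schwartz functions $u,v$ — the integrand is entire in $z$ and dominated uniformly on compact $z$-sets because $|f|^{p\Re z}$ and $|g|^{p\Re z}$ stay bounded when $f,g$ are bounded with compact support — which is precisely why one restricts to such $f,g$ first and then passes to the limit. Everything else is the kernel computation plus Plancherel.
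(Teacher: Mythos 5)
The paper does not actually prove this lemma: it is quoted directly from Simon's \emph{Trace Ideals}, Theorem 4.1 (the Kato--Seiler--Simon inequality), so the only available comparison is with the proof in that cited reference. Your argument --- the exact Hilbert--Schmidt kernel computation at $p=2$, the trivial operator-norm bound at $p=\infty$, and Stein interpolation of the analytic family $T_z=|f|^{pz}(\mathrm{sgn}\,f)\cdot\bigl(|g|^{pz}\,\mathrm{sgn}\,g\bigr)(-i\nabla)$ on the strip $0\le\Re z\le\tfrac12$, with the reduction to bounded compactly supported $f,g$ and the verification of weak analyticity and admissible growth --- is correct and is essentially the standard proof given in that reference, so there is nothing to object to.
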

Assume that
    \bel{g07}
    \beta \in L^{\infty}(\rd; {\mathbb C}^2), \quad
    {\rm div}\,\beta \in L^{\infty}(\rd).
    \ee
     Define the operator
    $$
    {\mathcal L}_{\beta}u : = \beta \cdot \nabla u, \quad u \in {\mathcal C}_0^{\infty}(\rd),
    $$
    and then close it in $L^2(\rd)$.
    \begin{proposition} \label{gp1}
    Let $A \in {\mathcal C}^1(\rd ; \rd)$, $z \in {\mathbb C}\setminus[0,\infty[$. Set  $R_A(z) : = (H(A) - z)^{-1}$. \\
    (i) Assume that $\alpha \in L^2(\rd)$. Then we have
    \bel{g1}
    \alpha R_A(z) \in {\mathcal T}_2, \quad   R_A(z) \alpha \in {\mathcal T}_2.
    \ee
    Moreover, there exists a constant $C_1$ independent of $z$,
    such that
    \bel{g42}
    \|\alpha R_A(z)\|_2 = \|R_A(z) \alpha\|_2 \leq C_1 C_0(z)
    \ee
    where
    \bel{g40}
    C_0(z) : =  \sup_{\lambda \in [0,\infty[} \frac{\lambda + 1}{|\lambda -
    z|}.
    \ee
    (ii) Assume that $\beta$ is compactly supported and satisfies \eqref{g07}.  Then we have
    \bel{g2}
    {\mathcal L}_\beta R_A(z) \in {\mathcal T}_4, \quad   R_A(z)  {\mathcal L}_\beta  \in {\mathcal T}_4.
    \ee
    Moreover, there exists a constant $C_2$ independent of $z$,
    such that
    \bel{g43}
    \|{\mathcal L}_\beta R_A(z)\|_4 \leq C_2 C_0(z), \quad \|R_A(z)  {\mathcal L}_\beta\|_4 \leq C_2
    C_0(z).
    \ee
    \end{proposition}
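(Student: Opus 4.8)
The plan is to reduce everything to the single value $z=-1$ of the spectral parameter and then to exploit the compact support of the coefficients together with a few elementary commutator identities. Writing $(H(A)+1)R_A(z)=I+(1+z)R_A(z)$ and using $\sigma(H(A))\subset[0,\infty[$, one has $\|(H(A)+1)R_A(z)\|\le\sup_{\lambda\ge0}\frac{\lambda+1}{|\lambda-z|}=C_0(z)$, so it is enough to prove that $\alpha(H(A)+1)^{-1}\in{\mathcal T}_2$ with Hilbert--Schmidt norm $\le C\|\alpha\|_{L^2}$, and that ${\mathcal L}_\beta(H(A)+1)^{-1}\in{\mathcal T}_4$ with a norm depending only on $\beta$ and on $\sup|A|$ near $\supp\beta$, and then to compose with $(H(A)+1)R_A(z)$ on the right. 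The statements for $R_A(z)\alpha$ and $R_A(z){\mathcal L}_\beta$ follow by passing to adjoints, $(R_A(z)\alpha)^\ast=\overline\alpha\,R_A(\overline z)$ and $(R_A(z){\mathcal L}_\beta)^\ast=-{\mathcal L}_{\overline\beta}R_A(\overline z)-(\div\overline\beta)R_A(\overline z)$, since $\overline\alpha$ and $\overline\beta$ satisfy the same hypotheses while $C_0(\overline z)=C_0(z)$; the exact equality $\|\alpha R_A(z)\|_2=\|R_A(z)\alpha\|_2$ in \eqref{g42} reflects the normality of $R_A(z)$, both sides being equal to $\tr(|\alpha|^2|R_A(z)|^2)$.

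For (i) I would invoke the diamagnetic inequality: since $(H(A)+1)^{-1}=\int_0^\infty e^{-t}\,e^{-tH(A)}\,dt$ and the kernel of $e^{-tH(A)}$ is dominated in modulus by the positive kernel of $e^{t\Delta}$, the kernel $G_A$ of $(H(A)+1)^{-1}$ satisfies $|G_A(x,y)|\le G_0(x-y)$, where $G_0$ is the kernel of $(-\Delta+1)^{-1}$. Here dimension two enters: $G_0\in L^2(\rd)$ as a function of the difference variable, with $\|G_0\|_{L^2(\rd)}^2=(2\pi)^{-2}\int_{\rd}(|\xi|^2+1)^{-2}\,d\xi<\infty$. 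Hence $\|\alpha(H(A)+1)^{-1}\|_2^2=\int_{\rd}\int_{\rd}|\alpha(x)|^2|G_A(x,y)|^2\,dy\,dx\le\|\alpha\|_{L^2}^2\,\|G_0\|_{L^2(\rd)}^2$, which gives \eqref{g1}--\eqref{g42}.

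Part (ii) is the substantial one, and the hard part is precisely that ${\mathcal L}_\beta R_A(z)=(\beta\cdot\nabla)R_A(z)$ behaves like a pseudodifferential operator of order $-1$ with a compactly supported left factor, which in two dimensions just fails to be Hilbert--Schmidt --- so ${\mathcal T}_4$ is the sharp conclusion --- whereas the two obvious shortcuts are blocked: one cannot replace $H(A)$ by $-\Delta$ (the potential $A$ is neither compactly supported nor bounded), and one cannot commute $(H(A)+1)^{1/2}$ past cut-offs with quantitative bounds (the magnetic field $B=\partial_1A_2-\partial_2A_1$ is only continuous, hence has no bounded derivatives). My way around this is to square. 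Put $T={\mathcal L}_\beta(H(A)+1)^{-1}$; the hypothesis $\div\beta\in L^\infty$ makes ${\mathcal L}_\beta$ closable, so $({\mathcal L}_\beta)^\ast{\mathcal L}_\beta$ is a well-defined non-negative self-adjoint operator and $\|T\|_4^2=\|T^\ast T\|_2=\|(H(A)+1)^{-1}({\mathcal L}_\beta)^\ast{\mathcal L}_\beta(H(A)+1)^{-1}\|_2$. Choose $\eta\in C_0^\infty(\rd)$ with $\eta\equiv1$ on a neighbourhood of $\supp\beta$. Since $\beta\cdot\nabla u=\beta\cdot\nabla(\eta u)$ for $u\in C_0^\infty$, and $\|\nabla(\eta u)\|\le\|D_A(\eta u)\|+\|A\eta u\|\le\langle\eta u,H(A)\eta u\rangle^{1/2}+(\sup_{\supp\eta}|A|)\|\eta u\|$ (the last step because $|A\eta u|\le(\sup_{\supp\eta}|A|)|\eta u|$ pointwise), one obtains the form inequality $({\mathcal L}_\beta)^\ast{\mathcal L}_\beta\le C_\beta\,\eta\,(H(A)+1)\,\eta$ with $C_\beta$ depending on $\|\beta\|_\infty$ and $\sup_{\supp\eta}|A|$. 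Conjugating this inequality by $(H(A)+1)^{-1}$ and comparing eigenvalues (a non-negative operator below another one lying in ${\mathcal T}_2$ is itself in ${\mathcal T}_2$, with a smaller norm) yields $\|T\|_4^2\le C_\beta\,\|(H(A)+1)^{-1}\eta(H(A)+1)\eta(H(A)+1)^{-1}\|_2$; and since the operator inside equals $[(H(A)+1)^{1/2}\eta(H(A)+1)^{-1}]^\ast[(H(A)+1)^{1/2}\eta(H(A)+1)^{-1}]$, this is $C_\beta\,\|(H(A)+1)^{1/2}\eta(H(A)+1)^{-1}\|_4^2$. To bound this last norm I would use $(H(A)+1)^{-1}\eta(H(A)+1)=\eta-(H(A)+1)^{-1}[H(A),\eta]$, with $[H(A),\eta]=\sum_k(D_{A,k}(i\partial_k\eta)+(i\partial_k\eta)D_{A,k})$ a first-order operator with smooth compactly supported coefficients: multiplying on the right by $\eta(H(A)+1)^{-1}$ and expanding, each resulting term factors as a ${\mathcal T}_2$ operator (by part (i): a compactly supported $L^2$ multiplication composed with a resolvent) times a bounded one, using that $D_{A,k}(H(A)+1)^{-1}$ and $(H(A)+1)^{-1}D_{A,k}$ are bounded (by $\le1$, from $D_{A,k}^2\le H(A)$). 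Thus $(H(A)+1)^{-1}\eta(H(A)+1)\eta(H(A)+1)^{-1}\in{\mathcal T}_2$ with a norm $C_\eta$ depending only on $\eta$, so $\|{\mathcal L}_\beta(H(A)+1)^{-1}\|_4\le(C_\beta C_\eta)^{1/2}$ and \eqref{g2}--\eqref{g43} follow with $C_2=(C_\beta C_\eta)^{1/2}$. I expect the only genuinely delicate points to be this last commutator bookkeeping and the verification that the relevant form-domain inclusions (in particular $D(H(A)^{1/2})\subset D({\mathcal L}_\beta)$, valid because on $\supp\beta$ one has $\nabla=-i(D_A-A)$ with $A$ locally bounded) make the above chain of identities rigorous; everything else is a routine use of Lemmas \ref{gl1}--\ref{gl2} and of the diamagnetic inequality.
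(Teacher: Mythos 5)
Your proof is correct, and for part (ii) it takes a genuinely different route from the paper's. Part (i) coincides with the paper's argument: reduce to $z=-1$ via $\|(H(A)+1)R_A(z)\|\le C_0(z)$, then apply the diamagnetic inequality and Parseval; your observation that the exact equality in \eqref{g42} comes from normality of $R_A(z)$ is a welcome precision (the paper's Lemma~\ref{gl1}(i) alone only gives $\|\alpha R_A(z)\|_2=\|R_A(\overline z)\overline\alpha\|_2$). For part (ii) the paper splits ${\mathcal L}_\beta R_A(-1)=i\beta\cdot(-i\nabla-A)R_A(-1)+i\beta\cdot A\,R_A(-1)$, disposes of the second term by (i), inserts cut-offs via resolvent commutators, peels off the contraction $\beta\cdot(-i\nabla-A)R_A(-1)^{1/2}$, and concludes from the ${\mathcal T}_4$ diamagnetic inequality together with the Kato--Seiler--Simon bound $\|\zeta(-\Delta+1)^{-1/2}\|_4<\infty$ (Lemma~\ref{gl2} with $p=4$). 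You instead square: $\|T\|_4^2=\|T^*T\|_2$, the form inequality $({\mathcal L}_\beta)^*{\mathcal L}_\beta\le C_\beta\,\eta(H(A)+1)\eta$, monotonicity of the ${\mathcal T}_2$ norm under $0\le X\le Y$, and a commutator expansion writing $(H(A)+1)^{-1}\eta(H(A)+1)\eta(H(A)+1)^{-1}$ as a sum of products of ${\mathcal T}_2$ factors from part (i) with the contractions $(i\partial_k+A_k)(H(A)+1)^{-1}$. What your variant buys is that only the Hilbert--Schmidt diamagnetic inequality is needed and no estimate on fractional powers of the free resolvent; the price is the eigenvalue-comparison step and the closability/form-domain bookkeeping for ${\mathcal L}_\beta$, which you correctly flag and which is settled by the essential self-adjointness of $H(A)$ on ${\mathcal C}_0^{\infty}(\rd)$. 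The treatment of $R_A(z)\alpha$ and $R_A(z){\mathcal L}_\beta$ by passing to adjoints, in particular $(R_A(z){\mathcal L}_\beta)^*=(-{\mathcal L}_{\overline\beta}-{\rm div}\,\overline\beta)R_A(\overline z)$, is identical to the paper's.
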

    \begin{proof}
    (i) By \eqref{g0} it suffices to prove only the first inclusion in \eqref{g1}, which  follows immediately from
    $$
    \|\alpha R_A(z)\|_2 = \|\alpha R_A(-1) (H(A) + 1) R_A(z)\|_2 \leq C_0(z) \|\alpha R_A(-1) \|_2 \leq
    $$
    \bel{g3}
    C_0(z) \|\alpha (-\Delta + 1)^{-1} \|_2 =
    \frac{C_0(z)}{2\pi} \left( \int_{\rd} |\alpha|^2 dx \int_{\rd} \frac{d\xi}{(|\xi|^2 + 1)^2}\right)^{1/2} < \infty.
    \ee
    Note that
    the second inequality is a special case of the diamagnetic inequality of Hilbert-Schmidt operators
    (see e.g. \cite[Theorem 2.13]{Si}), and the last equality just follows from the Parseval identity. \\
    (ii) Since we have
    $$
    (R_A(z) {\mathcal L}_\beta)^* = (-{\mathcal L}_{\overline{\beta}}-{\rm div}\,\overline{\beta}) R_A(\overline{z})
    $$
    and ${\rm div}\,\overline{\beta} R_A(\overline{z}) \in \mathcal{T}_2 \subset \mathcal{T}_4$ by \eqref{g1},
    again it suffices to check only the first inclusion in \eqref{g2}. As in the proof of \eqref{g1}
    we have
    $$
\|{\mathcal L}_\beta R_A(z)\|_4 \leq C_0(z) \|{\mathcal L}_\beta
R_A(-1)\|_4.
$$
    Further,
    \bel{g20}
    {\mathcal L}_\beta R_A(-1) = i\beta\cdot(-i\nabla -A)R_A(-1) + i\beta\cdot A R_A(-1),
    \ee
    and $i\beta\cdot A R_A(-1) \in \mathcal{T}_2 \subset \mathcal{T}_4$ by \eqref{g1}.
    Let $0 \leq \zeta_j \in {\mathcal C}_0^{\infty}(\rd)$, $j=0,1$, satisfy
    $\zeta_0 \beta = \beta$, $\zeta_1 \zeta_0 = \zeta_0$ on $\rd$.
    Since $[R_A(-1), \zeta_0] = - R_A(-1) [H(A), \zeta_0]
    R_A(-1)$, we have
    $$
    i\beta\cdot(-i\nabla -A)R_A(-1) =
    $$
    \bel{g21}
    i\beta\cdot(-i\nabla -A)R_A(-1)\zeta_0 -
    i\beta\cdot(-i\nabla -A)R_A(-1)\zeta_1 [H(A), \zeta_0] R(-1).
    \ee
    Note that the operator
    $$
    [H(A), \zeta_0] R_A(-1) = 2 \nabla \zeta_0 \cdot (-\nabla + iA)R_A(-1) - \Delta \zeta_0 R_A(-1)
    $$
    is  bounded. Therefore, it follows from  \eqref{g20}, \eqref{g21}, \eqref{g00}, that it suffices to
    check
    \bel{g22}
    \beta\cdot(-i\nabla -A)R_A(-1)\zeta \in {\mathcal T}_4
    \ee
    with $0 \leq \zeta \in {\mathcal C}_0^{\infty}(\rd)$. The mini-max principle implies
    \bel{g23}
    \|\beta\cdot(-i\nabla -A)R_A(-1)\zeta\|_4 \leq \|\beta\|_{L^{\infty}}
    \|R_A(-1)^{1/2}\zeta\|_4.
    \ee
    On the other hand,
    \bel{g24}
    \|R_A(-1)^{1/2}\zeta\|_4 = \|\zeta R_A(-1)^{1/2}\|_4
    \ee
    by \eqref{g0}. The diamagnetic inequality for ${\mathcal T}_4$-operators (see e.g. \cite[Theorem 2.13]{Si}) entails
    \bel{25}
    \|\zeta R_A(-1)^{1/2}\|_4 \leq \|\zeta (-\Delta + 1)^{-1/2}\|_4,
    \ee
    and by \eqref{g04} we obtain
    \bel{g26}
    \|\zeta (-\Delta + 1)^{-1/2}\|_4 \leq (2\pi)^{-1/2} \|\zeta\|_{L^4}
    \left(\int_{\rd} \frac{d\xi}{(|\xi|^2 + 1)^2}\right)^{1/4}.
    \ee
    Putting together \eqref{g23} - \eqref{g26}, we obtain \eqref{g22}, and hence \eqref{g2} - \eqref{g43}.
    \end{proof}

    \begin{remark} \label{gr2}
    If $\Im{z} \neq 0$, then the constant $C_0(z)$ defined in
    \eqref{g40} admits the estimate
    \bel{g41}
    C_0(z) \leq \frac{(\Re{z} + 1)_+}{|\Im{z}|} + 1.
    \ee
    \end{remark}

    \begin{corollary} \label{gf1}
    Let $A^{(j)} \in {\mathcal C}^1(\rd; \rd)$, $H_j : = H(A^{(j)})$,
    $R_j : = (H_j - z)^{-1}$,  $z \in {\mathbb C}\setminus[0,\infty[$, $j=1,2,3$. Assume that $\alpha \in L^2(\rd)$,
    $\beta$ satisfies \eqref{g07}, and $\alpha$ and $\beta$ are compactly supported. Then the operators
    \bel{g27}
    ({\mathcal L}_\beta + \alpha)R_j R_k R_l, \quad R_j({\mathcal L}_\beta + \alpha) R_k R_l,
    \quad R_j R_k ({\mathcal L}_\beta + \alpha) R_l, \quad R_j R_k R_l({\mathcal L}_\beta + \alpha)
    \ee
    with $j,k,l = 1,2,3$, are trace-class.
    \end{corollary}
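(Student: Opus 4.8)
The strategy is to reduce each of the twelve operators listed in \eqref{g27} to a product of three factors, each lying in a suitable Schatten class, and then invoke the Hölder-type inequality \eqref{g000} of Lemma \ref{gl1}(iii) with exponents $\frac14+\frac14+\frac14 = \frac34$, or $\frac12 + \frac14 + \frac14 = 1$, etc., to conclude membership in $\mathcal{T}_1$. The key inputs are already in place: Proposition \ref{gp1}(i) gives $\alpha R_j(z), R_j(z)\alpha \in \mathcal{T}_2$, and Proposition \ref{gp1}(ii) gives $\mathcal{L}_\beta R_j(z), R_j(z)\mathcal{L}_\beta \in \mathcal{T}_4$; hence $(\mathcal{L}_\beta + \alpha)R_j$ and $R_j(\mathcal{L}_\beta + \alpha)$ both lie in $\mathcal{T}_4$ (since $\mathcal{T}_2 \subset \mathcal{T}_4$).

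First I would treat the case where the differential operator $(\mathcal{L}_\beta + \alpha)$ sits at one end, say $(\mathcal{L}_\beta + \alpha)R_j R_k R_l$. Write this as $\big((\mathcal{L}_\beta + \alpha)R_j\big)\,R_k\,R_l$; the first factor is in $\mathcal{T}_4$, but the remaining two bounded resolvents do not yet supply enough smoothing. The fix is to insert a cutoff: since $\alpha$ and $\beta$ are compactly supported, pick $0 \le \zeta_0 \in \mathcal{C}_0^\infty(\rd)$ with $\zeta_0 \alpha = \alpha$ and $\zeta_0\beta = \beta$, so that $(\mathcal{L}_\beta + \alpha) = (\mathcal{L}_\beta + \alpha)\zeta_0$ and we may write the operator as $(\mathcal{L}_\beta+\alpha)R_j\cdot\big[\zeta_0 \text{-smoothed tail}\big]$. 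More efficiently, one commutes $\zeta_0$ through: using $[R_k,\zeta_1] = -R_k[H_k,\zeta_1]R_k$ with $\zeta_1\zeta_0 = \zeta_0$ and $[H_k,\zeta_1]R_k$ bounded, one rewrites $(\mathcal{L}_\beta+\alpha)R_jR_kR_l$ as a finite sum of terms each containing a factor of the form $(\text{bounded})\,\zeta\, R_m$ or $(\text{bounded})\,\zeta\,R_m^{1/2}$ with $\zeta \in \mathcal{C}_0^\infty$; then $\zeta R_m^{1/2} \in \mathcal{T}_4$ by the diamagnetic inequality as in \eqref{g24}--\eqref{g26}, and $\zeta R_m \in \mathcal{T}_2$ by Proposition \ref{gp1}(i). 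Collecting a $\mathcal{T}_4$ factor from $(\mathcal{L}_\beta+\alpha)R_j$ together with two $\mathcal{T}_4$ factors (or one $\mathcal{T}_2$ and one $\mathcal{T}_4$) produces a $\mathcal{T}_1$ (or better) operator by \eqref{g000}. The cases with $(\mathcal{L}_\beta+\alpha)$ in an interior slot, e.g. $R_j(\mathcal{L}_\beta+\alpha)R_kR_l$, are handled identically after writing $R_j(\mathcal{L}_\beta+\alpha) \in \mathcal{T}_4$ and distributing cutoffs over $R_k,R_l$; and $R_jR_kR_l(\mathcal{L}_\beta+\alpha)$ follows by taking adjoints and using Lemma \ref{gl1}(i) together with $(R_l(\mathcal{L}_\beta+\alpha))^* = (-\mathcal{L}_{\overline\beta} - \div\overline\beta + \overline\alpha)R_l(\overline z)$ as in the proof of Proposition \ref{gp1}(ii).

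The main obstacle is purely bookkeeping rather than conceptual: one must ensure that after all the commutator expansions each surviving term genuinely has three compactly-localized smoothing factors, so that the Schatten exponents add up to at most $1$. Because $\mathcal{L}_\beta$ is only first order, a single resolvent does not absorb it into $\mathcal{T}_2$ (only into $\mathcal{T}_4$), which is exactly why three resolvents — rather than the two that would suffice in the electric case — are needed here, and why the statement is phrased with triple products $R_jR_kR_l$. Once the localization is arranged, the conclusion is immediate from Lemma \ref{gl1}.
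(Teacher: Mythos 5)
Your overall strategy coincides with the paper's: reduce to the first two operators in \eqref{g27} by taking adjoints (Lemma \ref{gl1}(i)), propagate the compact support of $\alpha$ and $\beta$ to the right through the resolvents by means of nested cutoffs and the identity $[R_k,\zeta]=-R_k[H_k,\zeta]R_k$, and then apply the H\"older-type inequality \eqref{g000} to the resulting products of $\mathcal{T}_2$ and $\mathcal{T}_4$ factors supplied by Proposition \ref{gp1}. So the route is the same as the paper's.

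There is, however, a genuine error in your Schatten bookkeeping that would sink several of the terms as you have set them up. For a product $T_1\cdots T_n$ with $T_m\in\mathcal{T}_{p_m}$ to be trace class one needs $\sum_m p_m^{-1}\ge 1$, not $\le 1$; in particular three $\mathcal{T}_4$ factors only give $\mathcal{T}_{4/3}\supsetneq\mathcal{T}_1$, so the combination ``$\tfrac14+\tfrac14+\tfrac14=\tfrac34$'' that you invoke does not yield trace class, and your stopping criterion ``exponents add up to at most $1$'' is backwards. Relatedly, you propose to treat the commutator remainders $[H_k,\zeta_1]R_k$ as merely bounded; but then a term such as $({\mathcal L}_\beta+\alpha)R_j\,\zeta_0R_k\,[H_k,\zeta_1]R_kR_l$ only collects $\mathcal{T}_4\cdot\mathcal{T}_2\cdot{\mathcal B}$, i.e. $\tfrac14+\tfrac12<1$, which is not enough. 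The paper's proof avoids both problems by (a) observing that $[H_j,\zeta_m]R_j=\left(2\nabla\zeta_m\cdot(-\nabla+iA^{(j)})-\Delta\zeta_m\right)R_j$ is itself of the form covered by Proposition \ref{gp1}, hence lies in $\mathcal{T}_4$ and not just in ${\mathcal B}$, and (b) iterating the commutator expansion with three cutoffs $\zeta_0,\zeta_1,\zeta_2$ so that every surviving term carries either one $\mathcal{T}_4$ factor together with two $\mathcal{T}_2$ factors, or four $\mathcal{T}_4$ factors --- the worst term being $({\mathcal L}_\beta+\alpha)R_j[H_j,\zeta_0]R_j[H_j,\zeta_1]R_j[H_j,\zeta_2]R_jR_kR_l$, where exactly $4\times\tfrac14=1$. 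With the inequality corrected and the expansion pushed one level deeper, your plan becomes the paper's proof.
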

    \begin{proof}
    By \eqref{g0} it suffices to consider only the first two operators in \eqref{g27}.
    Introduce three functions $0 \leq \zeta_j(\rd) \in {\mathcal C}_0^{\infty}(\rd)$, $j=0,1,2$, such that
    $\zeta_0 \alpha = \alpha$, $\zeta_0 \beta = \beta$, $\zeta_j \zeta_{j-1} = \zeta_{j-1}$, $j=1,2$. Note that
    $[R_j, \zeta_k] = - R_j [H_j, \zeta_k] R_j$, and
     \bel{g29}
     [H_j, \zeta_k] = 2 \nabla \zeta_k \cdot (-\nabla + iA^{(j)}) - \Delta \zeta_k
     \ee
     with $j=1,2,3$, and $k=0,1,2$. Then we have
     $$
     ({\mathcal L}_\beta + \alpha)R_j R_k R_l =
     $$
     $$
     ({\mathcal L}_\beta + \alpha)R_j \zeta_0 R_k \zeta_1 R_l  -
     ({\mathcal L}_\beta + \alpha)R_j \zeta_0 R_k [H_k, \zeta_1] R_k R_l -
     ({\mathcal L}_\beta + \alpha)R_j [H_j, \zeta_0]R_j  \zeta_1 R_k R_l +
     $$
     $$
     ({\mathcal L}_\beta + \alpha)R_j [H_j,\zeta_0] R_j [H_j, \zeta_1] R_j \zeta_2 R_k R_l -
     $$
     \bel{g30}
     ({\mathcal L}_\beta + \alpha)R_j [H_j,\zeta_0] R_j [H_j, \zeta_1] R_j [H_j,\zeta_2]R_j R_k R_l,
     \ee
     $$
     R_j({\mathcal L}_\beta + \alpha) R_k R_l =
     $$
     \bel{g31}
     R_j \zeta_0 ({\mathcal L}_\beta + \alpha) R_k \zeta_1 R_l -
     R_j \zeta_0 ({\mathcal L}_\beta + \alpha) R_k [H_k, \zeta_1] R_k R_l.
     \ee
     Taking into account  Proposition \ref{gp1} \eqref{g29},
     as well as \eqref{g000} with $p=1$ and \eqref{g00}, we find that \eqref{g30} and
\eqref{g31} imply that the operators in \eqref{g27} are
trace-class.
    \end{proof}

{\bf Acknowledgements}.
 N. D. and F. G. were supported in part by ANR 08 BLAN 0261. N. D. and G. R. were partially
supported by {\em N\'ucleo Cient\'ifico ICM} P07-027-F ``{\em
Mathematical Theory of Quantum and Classical Magnetic Systems"}.
   F. G. and G. R. were partially supported by the Chilean Scientific Foundation {\em
Fondecyt} under Grant 1090467. G. R. thanks for partial supports
the University of Cergy-Pontoise during his visit in 2008, and the
Bernoulli Center, EPFL, Switzerland, during his participation in
the Program ``{\em Spectral and Dynamical Properties of Quantum
Hamiltonians}" in
2010.\\

\end{document}